\DeclareFontFamily{OT1}{pzc}{}
\DeclareFontShape{OT1}{pzc}{m}{it}{<-> s * [1.10] pzcmi7t}{}
\DeclareMathAlphabet{\mathpzc}{OT1}{pzc}{m}{it}
\newtheorem{theorem}{Theorem}[section]
\newtheorem*{namedtheorem}{\theoremname}
\newcommand{\theoremname}{testing}
\newtheorem{lemma}[theorem]{Lemma}
\newtheorem{proposition}[theorem]{Proposition}
\newtheorem{observation}[theorem]{Observation}
\newtheorem{corollary}[theorem]{Corollary}
\newtheorem{conjecture}[theorem]{Conjecture}
\newtheorem*{question*}{Question}
\theoremstyle{definition}
\newtheorem{definition}[theorem]{Definition}
\newtheorem{remark}[theorem]{Remark}
\theoremstyle{plain}
\providecommand{\theoremname}{Theorem}
\DeclareMathOperator{\adj}{adj}
\DeclareMathOperator{\supp}{supp}
\DeclareMathOperator{\0}{\textbf{0}}
\DeclareFontFamily{U}{mathc}{}
\DeclareFontShape{U}{mathc}{m}{it}%
{<->s*[1.03] mathc10}{}
\DeclareMathAlphabet{\mathscr}{U}{mathc}{m}{it}
\title{Singularity of random symmetric matrices -- a combinatorial approach to improved bounds}
\author{Asaf Ferber \thanks{Massachusetts Institute of Technology. Department of Mathematics. Email: {\tt ferbera@mit.edu}. Research is partially supported by NSF 6935855.} \and 
Vishesh Jain\thanks{Massachusetts Institute of Technology. Department of Mathematics. Email: {\tt visheshj@mit.edu}. Research is partially supported by NSF CCF 1665252,  NSF DMS-1737944 and ONR N00014-17-1-2598.
}}
\date{}
\begin{document}
\maketitle
\global\long\def\R{\mathbb{R}}

\global\long\def\S{\mathbb{S}}

\global\long\def\Z{\mathbb{Z}}

\global\long\def\C{\mathbb{C}}

\global\long\def\Q{\mathbb{Q}}

\global\long\def\N{\mathbb{N}}

\global\long\def\P{\mathbb{P}}

\global\long\def\F{\mathbb{F}}

\global\long\def\U{\mathcal{U}}

\global\long\def\V{\mathcal{V}}

\global\long\def\E{\mathbb{E}}

\global\long\def\Ev{\mathscr{Rk}}

\global\long\def\Dg{\mathscr{D}}

\global\long\def\Ndg{\mathscr{ND}}

\global\long\def\Rv{\mathcal{R}}

\global\long\def\Gv{\mathscr{Null}}

\global\long\def\Hv{\mathscr{Orth}}

\global\long\def\Supp{{\bf Supp}}

\global\long\def\Sv{\mathscr{Spt}}

\global\long\def\ring{\mathfrak{R}}

\global\long\def\1{\mathbbm{1}}

\global\long\def\Bad{{\bf B}}

\global\long\def\A{\mathcal{A}}

\global\long\def\L{\mathcal{L}}

\begin{abstract}
Let $M_n$ denote a random symmetric $n \times n$ matrix whose upper diagonal entries are independent and identically distributed Bernoulli random variables (which take values $1$ and $-1$ with probability $1/2$ each). It is widely conjectured that $M_n$ is singular with probability at most $(2+o(1))^{-n}$. On the other hand, the best known upper bound on the singularity probability of $M_n$, due to Vershynin (2011), is $2^{-n^c}$, for some unspecified small constant $c > 0$. This improves on a polynomial singularity bound due to Costello, Tao, and Vu (2005), and a bound of Nguyen (2011) showing that the singularity probability decays faster than any polynomial. In this paper, improving on all previous results, we show that the probability of singularity of $M_n$ is at most $2^{-n^{1/4}\sqrt{\log{n}}/1000}$ for all sufficiently large $n$. The proof utilizes and extends a novel combinatorial approach to discrete random matrix theory, which has been recently introduced by the authors together with Luh and Samotij.\\ \\
\emph{2010 Mathematics Subject Classification}. Primary 60B20.
\end{abstract}

\section{Introduction}
The invertibility problem for Bernoulli matrices is one of the most well-studied problems in discrete random matrix theory. Letting $A_n$ denote a random $n\times n$ matrix, whose entries are independent and identically distributed (i.i.d.) Bernoulli random variables which take values $\pm 1$ with probability $1/2$ each, this problem asks for the value of $c_n$, which is the probability that $A_n$ is singular. By considering the event that two rows or two columns of $A_n$ are equal (up to a sign), it is clear that $$c_n \geq (1+o(1))n^{2}2^{1-n}.$$ It has been widely conjectured that this bound is, in fact, tight. On the other hand, perhaps surprisingly, it is non-trivial even to show that $c_n$ tends to $0$ as $n$ goes to infinity -- this was first accomplished in 1967 by Koml\'os \cite{komlos1967determinant}, who showed using the classical Erd\H{o}s-Littlewood-Offord anti-concentration inequality that $$c_n = O\left(n^{-1/2}\right).$$ Subsequently, a breakthrough result due to  Kahn, Koml\'os, and Szemer\'edi in 1995 \cite{kahn1995probability} showed that $$c_n = O(0.999^{n}).$$
After intermediate improvements in the base of the exponent due to Tao and Vu \cite{tao2007singularity} and  Bourgain, Vu, and Wood
\cite{bourgain2010singularity}, this conjecture has been settled up to lower order terms recently (in fact, a few months after the appearance of the present work) in a very impressive work of Tikhomirov \cite{Tikhomirov2018}, showing that $$c_n \leq (2+o(1))^{-n}.$$

Another widely studied model of random matrices is that of random \emph{symmetric} matrices; apart from being important for applications, it is also very interesting from a technical perspective as  it is one of the simplest models with nontrivial correlations between the entries of the matrix. Formally, let $M_n$ denote a random $n\times n$ symmetric matrix, whose upper-diagonal entries are i.i.d. Bernoulli random variables which take values $\pm 1$ with probability $1/2$ each, and let $q_n$ denote the probability that $M_n$ is singular. Despite its similarity to $c_n$, much less is known about $q_n$, as we discuss below.  

The problem of determining whether $q_n$ tends to $0$ as $n$ goes to infinity was first posed by Weiss in the early 1990s and only settled in 2005 by Costello, Tao, and Vu \cite{costello2006random}, who showed that $$q_n = O\left(n^{-1/8 + o(1)}\right).$$ In order to do this, they introduced and studied a quadratic variant of the Erd\H{o}s-Littlewood-Offord inequality.  
Subsequently, Nguyen \cite{nguyen2012inverse} developed a quadratic variant of \emph{inverse} Littlewood-Offord theory to show that $$q_n = O_{C}(n^{-C})$$ for any $C>0$, where the implicit constant in $O_{C}(\cdot)$ depends only on $C$. This so-called quadratic inverse Littlewood-Offord theorem in \cite{nguyen2012inverse} builds on previous work of Nguyen and Vu \cite{nguyen2011optimal}, which is itself based on deep Freiman-type theorems in additive combinatorics (see \cite{tao2008john} and the references therein).
The current best known upper bound on $q_n$ is due to Vershynin \cite{vershynin2014invertibility}, who used a sophisticated and technical geometric framework pioneered by Rudelson and Vershynin \cite{rudelson2008littlewood,rudelson2010non} to show that $$q_n = O(2^{-n^c})$$ for some unspecified small constant $c > 0$. 

As far as lower bounds on $q_n$ are concerned, once again, by considering the event that the first and last rows of $M_n$ are equal (up to a sign), we see that $q_n \geq (2+o(1))^{-n}$. It is commonly believed that this lower bound is tight. 
\begin{conjecture}[\cite{costello2006random,vu2008random}]
\label{conjecture:prob-singularity}
We have
$$q_n = (2+o(1))^{-n}.$$
\end{conjecture}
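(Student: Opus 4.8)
The plan is to push the combinatorial framework of this paper all the way down to the conjectured exponential rate, so that the upper bound matches the lower bound coming from two (anti-)equal rows. The first step is the usual reduction: rather than estimating $\P[\det M_n = 0]$ directly, estimate $\P[\exists\, v \in S^{n-1} : M_n v = 0]$, and partition $S^{n-1}$ into \emph{structured} unit vectors (sparse, approximately sparse, or more generally of small ``essential additive dimension'' / small least common denominator) and \emph{unstructured} ones, treating the two regimes by completely different means. For structured $v$ one runs a union bound over a net: the number of directions in each structure class is small enough that even the crude estimate $\P[\langle X, v\rangle = 0] \le 1-c$ for a single random $\pm 1$ row $X$, raised to the number of available independent rows, beats the net cardinality. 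The new feature relative to the i.i.d.\ case already appears here: the rows of $M_n$ are not independent, so one must expose them carefully -- e.g.\ condition on a principal $(n/2)\times(n/2)$ submatrix, leaving $n/2$ conditionally independent rows -- and run the net argument on the bottom block.

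For unstructured $v$ the heart of the matter is a \emph{sharp} inverse Littlewood--Offord statement of quadratic type: if a unit vector $v$ carries no additive structure, then $\rho(v) := \sup_t \P[\langle X, v\rangle = t] \le (2+o(1))^{-n}$ for a random $\pm 1$ row $X$, and, crucially, the quadratic form $v^\top M' v$ attached to a random symmetric block $M'$ is equally spread out. One then conditions on an $(n-k)\times(n-k)$ corner $M'$ of $M_n$, with $k$ a small constant fraction of $n$: on the event that $M_n$ has an unstructured null vector $v$, each of the $k$ exposed rows is forced into a deterministic affine hyperplane determined by $M'$ and $v$, and by the inverse theorem each such hyperplane carries probability $(2+o(1))^{-(n-k)}$. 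A decoupling argument, needed to neutralize the quadratic diagonal terms produced by the symmetry constraint, makes these $k$ events essentially independent, giving $(2+o(1))^{-k(n-k)}$; optimizing $k$ (and a final sweep over structure classes) yields $q_n \le (2+o(1))^{-n}$. Turning ``no additive structure'' into ``probability at most $(2+o(1))^{-n}$'', with the sharp constant $2$ rather than some $2+\varepsilon$, is done by a counting argument in the spirit of Kahn--Koml\'os--Szemer\'edi: bound the number of vectors in each structure class by the volume of an associated Minkowski sum, and check that this volume times the small-ball bound for the class forms a geometrically decreasing series.

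The step I expect to be the main obstacle is the \emph{sharp} quadratic inverse Littlewood--Offord theorem. Existing quadratic inverse theorems (Costello--Tao--Vu, Nguyen, Vershynin) lose polynomial or worse factors and do not pin down the constant $2$; extracting it appears to require a symmetric analogue of the ``inversion of randomness''/entropy-compression technique, in which one must simultaneously control a random quadratic form \emph{and} handle the fact that the relevant random vector (a row of $M_n$) is correlated with the matrix defining that form. A secondary obstacle is the decoupling: a naive decoupling step costs a multiplicative constant in the exponent, so one must show that only $O(1)$ steps are needed, or devise a loss-free decoupling tailored to the rank-one perturbations that arise here. If both points can be settled, assembling the structured and unstructured estimates and letting the $o(1)$ terms vanish proves \Cref{conjecture:prob-singularity}.
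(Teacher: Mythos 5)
This statement is \cref{conjecture:prob-singularity}, which the paper does \emph{not} prove: it is stated as an open conjecture, and the paper's main result (\cref{thm:main-thm}) establishes only the far weaker bound $q_n \le 2^{-n^{1/4}\sqrt{\log n}/1000}$. Your text is accordingly not a proof but a research programme, and you say so yourself: the two ingredients on which everything hinges --- a \emph{sharp} quadratic inverse Littlewood--Offord theorem pinning down the constant $2$ exactly, and a loss-free decoupling --- are precisely the ones you flag as ``obstacles'' and leave unestablished. Neither exists in the literature and neither is supplied here, so the gap is at the core of the argument, not its periphery.

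Two of the steps you do sketch would also fail as written. First, the decoupling inequality available for quadratic forms (\cref{lemma:decoupling-CTV}) bounds $\Pr[E]^4$ by the probability of the decoupled event, so any anti-concentration estimate fed through it emerges with its exponent divided by $4$; this is exactly why the paper's non-degenerate bound \cref{eqn:conclusion-nondegerate} carries an outer power $1/4$, and it caps any decoupling-based route at roughly $(2+o(1))^{-n/4}$ unless decoupling is avoided altogether --- ``only $O(1)$ decoupling steps'' does not help, since even one step quarters the exponent. Second, in the unstructured regime you treat the null vector $v$ as fixed before the $k$ exposed rows are revealed (``each of the $k$ exposed rows is forced into a deterministic affine hyperplane determined by $M'$ and $v$''); but $v$ depends on the entire matrix, including those rows, so this step requires a union bound over all candidate unstructured $v$ --- an a priori infinite family --- which is the central difficulty that this paper (via reduction mod $p$ plus the counting lemma, \cref{thm:counting-lemma}) and Vershynin (via nets on the sphere) each spend most of their effort overcoming. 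The claimed near-independence of the $k$ row-events, yielding $(2+o(1))^{-k(n-k)}$, also ignores the symmetric $k\times k$ corner that couples those rows. In short, the skeleton is a plausible description of how one might someday attack the conjecture, but the load-bearing lemmas are missing, and the statement remains open in this paper.
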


In this paper, we obtain a much stronger upper bound on $q_n$, thereby making progress towards \cref{conjecture:prob-singularity}.  
\begin{theorem}
\label{thm:main-thm}
There exists $n_0 \in \N$ such that for all $n\geq n_0$, 
$$q_n \leq 2^{-n^{1/4}\sqrt{\log{n}}/1000}.$$
\end{theorem}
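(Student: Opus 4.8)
The plan is to run the combinatorial framework that the authors developed with Luh and Samotij, adapted to the two features that make the symmetric model harder than the i.i.d.\ one: the rows of $M_n$ are not independent, and no sufficiently effective inverse Littlewood--Offord theorem is available, so the count of ``structured'' kernel vectors must be carried out directly. Throughout, whenever $M_n$ is singular we fix a primitive integer kernel vector $v\in\Z^n$, and for the probability estimates the device for coping with the symmetry is to choose a balanced partition $[n]=I\sqcup J$ and exploit that the off-diagonal block $(M_n)_{I\times J}$ has mutually independent entries, independent of $(M_n)_{I\times I}$, of $(M_n)_{J\times J}$ and of the diagonal; conditioning on everything except $(M_n)_{I\times J}$ turns the coordinates $\langle R_i,v\rangle$, $i\in I$, into independent linear forms in $v|_{J}$, so that $\P[\,M_nv=0\mid\cdots\,]\le\rho(v|_{J})^{|I|}$, where $\rho(x):=\sup_t\P_\xi[\langle\xi,x\rangle=t]$.

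First I would dispose of the genuinely \emph{compressible} kernel vectors — those within $\ell_2$-distance $\rho_0$ of a vector supported on at most $\rho_0 n$ coordinates, for a small absolute constant $\rho_0$. These are covered by an $\varepsilon$-net of size $\binom{n}{\rho_0 n}(C/\varepsilon)^{\rho_0 n}=2^{O(\rho_0 n\log n)}$ with $\varepsilon=n^{-O(1)}$, and for a fixed near-kernel vector $v_0$ supported on a set $S$ with $|S|\le\rho_0 n$, taking $J=S$ makes $\langle R_i,v_0\rangle$, $i\notin S$, into $n-|S|\ge(1-\rho_0)n$ independent linear forms in $v_0|_{S}$; the Erd\H{o}s--Littlewood--Offord inequality then gives $\P[\,\|M_nv_0\|_\infty\le n^{-O(1)}\|M_n\|_{\mathrm{op}}\,]\le (C/\sqrt{|S|})^{(1-\rho_0)n}=2^{-\Omega((1-\rho_0)n\log n)}$, which dominates the net size once $\rho_0$ is chosen small. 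The same net argument, applied to principal submatrices, also furnishes the mild a priori inputs used below — that $\P[\mathrm{corank}\ \text{is large}]$ and $\P[\sigma_{\min}\ \text{is tiny}]$ are negligible. This case contributes at most $2^{-\Omega(n\log n)}$.

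The heart of the matter is the \emph{incompressible} case, where $v$ has $\Omega(n)$ coordinates of magnitude comparable to $\|v\|_2$, so $\rho(v)\le C/\sqrt n$ automatically. The first and most delicate task — and the place where this approach substitutes for least-common-denominator technology — is to reduce to integer kernel vectors lying in a box of only \emph{polynomially bounded} radius $D$, at the cost of a controllable additive error; I would do this via Hadamard's determinant bound together with an argument that exposes a minimal linearly dependent set of columns. One then stratifies $v\in\Z^n\cap[-D,D]^n$ by the dyadic value $\theta$ of $\rho(v)$. For each $\theta$, the counting lemma from the earlier joint work bounds the number of such $v$ by roughly $(C\theta D\sqrt n)^n$ — via the pigeonhole fact that $\rho(v)\ge\theta$ forces $\asymp\theta\cdot 4^n$ ordered pairs of sign vectors with $\langle\xi-\xi',v\rangle=0$, each a nontrivial linear constraint — while the decoupling bound, with $J$ chosen so that $\rho(v|_{J})$ is as small as the structure of $v$ permits, gives $\P[\,M_nv=0\,]\le\theta^{\Omega(n)}$ for each such $v$. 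Multiplying and summing the geometric series in $\theta$ gives a bound of the shape $(CD\sqrt n)^{n}n^{-\Omega(n)}$; optimizing the trade-off between how small a box $D$ the truncation step can afford, how small $\rho_0$ the compressible step permits, and the constants in the counting lemma is what produces the stated exponent and, after bookkeeping, the bound $q_n\le 2^{-n^{1/4}\sqrt{\log n}/1000}$.

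The hard part, I expect, will be entirely in the incompressible regime, on two counts. First, because the symmetry lets us see only one half $J$ of $v$ through an independent block, one must rule out that $v$ is arithmetically structured on every balanced half while being incompressible overall: what is really needed is a robust ``restriction'' estimate bounding $\rho(v|_{J})$ in terms of $\rho(v)$ for a well-chosen $J$, and any inefficiency here is amplified over $\Omega(n)$ coordinates and is the main source of the loss relative to the i.i.d.\ bounds. Second, and more fundamentally, one must carry out the reduction to a polynomially bounded box while keeping the integer kernel equation \emph{exact} — this exactness is exactly what lets one avoid the lossy nets and, unlike the geometric method of Vershynin, bypass least-common-denominator estimates — but controlling, uniformly over $M_n$, the contribution of singular matrices whose kernel vectors all have very large entries is not automatic, and the efficiency achievable in this step is what caps the improvement at $2^{-n^{1/4}\sqrt{\log n}/1000}$ rather than at something closer to the conjectured truth.
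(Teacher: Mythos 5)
Your proposal shares the paper's combinatorial philosophy -- union-bound over candidate kernel vectors and count the arithmetically structured ones via a counting lemma of the kind in \cref{thm:counting-lemma} -- but the route you sketch differs substantially from the paper's, and at its crux has a genuine gap. On the differences: you handle the symmetry of $M_n$ via the off-diagonal block (condition on $(M_n)_{I\times I}$, $(M_n)_{J\times J}$ and the diagonal; then $\langle R_i,v\rangle$, $i\in I$, become independent linear forms in the i.i.d.\ entries of the $I\times J$ block). That is a valid and in fact cleaner device than the paper's, which instead Laplace-expands $\det(M_n)$ in the first row and decouples the resulting quadratic form via \cref{lemma:decoupling-CTV}, incurring a fourth-power loss your route would avoid; this in turn forces the paper's preliminary reductions to corank one and to a near-full-rank principal minor (\cref{lem:rank reduction,lem:second reduction,corollary:remove-first-row}), none of which your scheme would need. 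On the other hand, your compressible/incompressible dichotomy with $\varepsilon$-nets is closer to the Rudelson--Vershynin framework that the paper deliberately avoids; the paper instead uses a small-support versus large-support split over $\F_p^n$ (\cref{lemma:eliminate-small-support}).

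The gap is the step you yourself flag as hard: the ``reduction to integer kernel vectors lying in a box of polynomially bounded radius $D$.'' Such a reduction is not available. Cramer's rule on a minimal dependent set of $k+1$ columns plus Hadamard's bound gives a primitive integer kernel vector with entries as large as $k^{k/2}$; in the incompressible regime $k=\Theta(n)$, that is $n^{\Theta(n)}$, not polynomial, and there is no further normalization to invoke without already knowing that $v$ is structured (which is what you are trying to prove). Without a finite box, the union bound over $\Z^n$ is vacuous and the dyadic stratification of $\rho(v)$ has nothing to range over. The paper circumvents this by passing to $\F_p^n$ for a carefully chosen prime $p = 2^{n^{1/4}\sqrt{\log n}/32}$: a primitive integer kernel vector reduces to a nonzero vector of $\F_p^n$, the atom probability can only increase under reduction mod $p$, and both Hal\'asz's inequality (\cref{thm:halasz}) and the counting lemma (\cref{thm:counting-lemma,corollary:counting}) are then applied over the finite field, giving a finite union bound with no box at all. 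Relatedly, your sketch never actually derives the exponent $n^{1/4}\sqrt{\log n}$; in the paper it drops out of balancing $p$, the orthogonality threshold $\beta n$, and the Hal\'asz parameters $k,s_1,s_2$ in the proof of \cref{prop:structural}, and without the modular reduction your scheme offers no analogous handle on these trade-offs.
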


\begin{remark}
While the constant $1000$ in the above theorem is somewhat arbitrary, the leading order term $n^{1/4}\sqrt{\log{n}}$ in the exponent is optimal for the argument in this paper. We believe that improving the exponent to even $n^{(1/2) + \epsilon}$ (for some absolute constant $\epsilon > 0$) will likely require new ideas beyond those in the present work, since even in the case of i.i.d. Rademacher random matrices, the combinatorial techniques from \cite{FJLS2018} that we build upon here are only able to obtain an upper bound of $2^{-\tilde\Omega({\sqrt{n}})}$ on the singularity probability. 
\end{remark}
Apart from providing a stronger conclusion, our proof of the above theorem is considerably shorter than previous works, and introduces and extends several novel combinatorial tools and ideas in discrete random matrix theory (some of which are based on joint work of the authors with Luh and Samotij \cite{FJLS2018}). We believe that these ideas allow for a unified approach to the singularity problem for many different discrete random matrix models, which have previously been handled in an ad-hoc manner (see also the discussion at the end of the next subsection). 
  
\subsection{Outline of the proof and comparison with previous work}
In this subsection, we provide a very brief, and rather imprecise, outline of our proof, and compare it to previous works of Nguyen \cite{nguyen2012inverse} and Vershynin \cite{vershynin2014invertibility}; for further comparison with the work of Costello, Tao, and Vu, see \cite{nguyen2012inverse}. 

Let $\boldsymbol{x}:=(x_1,\ldots,x_n)$ be the first row of $M_n$, let $M^{1}_{n-1}$ denote the bottom-right $(n-1)\times (n-1)$ submatrix of $M_n$, and for $2\leq i,j \leq n$, let $c_{ij}$ denote the cofactor of $M^{1}_{n-1}$ obtained by removing its $(i-1)^{st}$ row and $(j-1)^{st}$ column. Then, Laplace's formula for the determinant gives 
$$\det(M_n)=x_1\det(M_{n-1})-\sum_{i,j=2}^n c_{ij}x_ix_j,$$
so that our goal is to bound the probability (over the randomness of $x$ and $c_{ij}$) that this polynomial is zero. By a standard reduction due to \cite{costello2006random} (see \cref{lem:rank reduction,lem:second reduction,corollary:remove-first-row}), we may further assume that $M^{1}_{n-1}$ has rank either $n-2$ or $n-1$. In this outline, we will only discuss the case when $M^{1}_{n-1}$ has rank $n-1$; the other case is easier, and is handled exactly as in \cite{nguyen2012inverse} (see \cref{lemma:reduction-to-linear,eqn:conclusion-degenerate-case}).    

A decoupling argument due to \cite{costello2006random} (see \cref{lemma:decoupling-CTV}) further reduces the problem (albeit in a manner incurring a loss) to bounding from above the probability that 
$$\sum_{i\in U_1}\sum_{j \in U_2}c_{ij}(x_i - x_i')(x_j - x_j')=0,$$
where $U_1 \sqcup U_2 $ is an arbitrary non-trivial partition of $[n-1]$, and $x_i', x_j'$ are independent copies of $x_i, x_j$ (see \cref{corollary:decoupling-conclusion}). For the remainder of this discussion, the reader should think of $|U_2|$ as `small'(more precisely, $|U_2| \sim n^{1/4}\sqrt{\log{n}}$). 
We remark that a similar decoupling based reduction is used in \cite{vershynin2014invertibility} as well, whereas \cite{nguyen2012inverse} also uses a similar decoupling inequality in proving the so-called quadratic inverse Littlewood-Offord theorem. The advantage of decoupling is that for any given realization of the variables $(c_{ij})_{2\leq i,j \leq n}$ and $(x_j - x_j')_{j\in U_2}$, the problem reduces to bounding from above the probability that the \emph{linear sum} 
$$\sum_{i\in U_1}R_i(x_i - x_i')=0,$$
where $R_i:= \sum_{j \in U_2}c_{ij}(x_j - x_j')$. Problems of this form are precisely the subject of standard (linear) Littlewood-Offord theory. 

Broadly speaking, Littlewood-Offord theory applied to our problem says that the less `additive structure' the $|U_1|$-dimensional vector $(R_i)_{i\in U_1}$ possesses, the smaller the probability of the above sum being zero. Quantifying this in the form of `Littlewood-Offord type theorems' has been the subject of considerable research over the years; we refer the reader to \cite{nguyen2013small, rudelson2010non} for general surveys on the Littlewood-Offord problem with a view towards random matrix theory. Hence, our goal is to show that with very high probability, the vector $(R_i)_{i\in U_1}$ is additively `very unstructured'. This is the content of our structural theorem (\cref{thm:structural}), which is at the heart of our proof. 

The statement (and usefulness) of our structural theorem is based on the following simple, yet powerful, observations. 
\begin{itemize}
\item The $(n-1)$-dimensional vector $\boldsymbol{R}:= (R_2,\dots,R_{n})$, where recall that $R_i = \sum_{j\in U_2}c_{ij}(x_j-x_j')$, is zero if and only if $x_j = x_j'$ for all $j \in |U_2|$, which happens with probability exponentially small in $|U_2|$; the if and only if statement holds since the matrix $(c_{ij})_{2\leq i,j \leq n}$ is proportional to the matrix $(M^{1}_{n-1})^{-1}$, which is assumed to be invertible.
\item The vector $\boldsymbol{R}$ is orthogonal to at least $n-1-|U_2|$ rows of $M^{1}_{n-1}$ (\cref{lemma:R-orthogonal}). This follows since for any $2\leq j_0 \leq n$, the $n-1$ dimensional vector $(c_{ij_{0}})_{2\leq i\leq n}$ is orthogonal to all   but the $j_0^{th}$ row of $M^{1}_{n-1}$, again since the matrix $(c_{ij})_{2\leq ij \leq n}$ is proportional to the matrix $(M^{1}_{n-1})^{-1}$.  
\item The probability of the linear sum $\sum_{i \in U_1} R_i(x_i - x_i')$ being zero is `not much more' than the probability of the linear sum $\sum_{2\leq i \leq n} R_i (x_i - x_i')$ being zero (\cref{lemma:restriction-atom-prob}). 
\end{itemize}
Taken together, these observations show that it suffices to prove a structural theorem of the following form: \emph{every} non-zero integer vector which is orthogonal to `most' rows of $M^{1}_{n-1}$ is `very unstructured'. In \cite{nguyen2012inverse}, a structural theorem along similar lines is also proven. However, it suffers from two drawbacks. First, the notion of `very unstructured' in the conclusion there is much weaker, leading to the bound $O_C(n^{-C})$ for any constant $C>0$, as opposed to our bound from \cref{thm:main-thm}. Second, such a conclusion is not obtained for every non-zero integer vector, but only for those non-zero integer vectors for which `most' coefficients satisfy the additional additive constraint of being contained in a `small' generalized arithmetic progression (GAP) of `low complexity'. Consequently, the simple observations mentioned above no longer suffice, and the rest of the proof in \cite{nguyen2012inverse} is necessarily more complicated. 

The structural theorem in \cite{vershynin2014invertibility} is perhaps closer in spirit to ours, although there are many key differences, of which we mention here the most important one. Roughly speaking, both \cite{vershynin2014invertibility} and the present work prove the respective structural theorems by taking the union bound, over the choice of a non-zero (integer) vector which is not `very unstructured', that the matrix-vector product of $M^{1}_{n-1}$ with this vector is contained in a small prescribed set. \emph{A priori}, this union bound is over an infinite collection of vectors. In order to overcome this obstacle, \cite{rudelson2008littlewood,vershynin2014invertibility} adopts a geometric approach of grouping vectors on the unit sphere into a finite number of clusters based on Euclidean distances; using the union bound and a non-trivial estimate of the number of clusters to show that with very high probability, the matrix-vector product of $M^{1}_{n-1}$ with a representative of each cluster is `far' from the small prescribed set; and then, using estimates on the operator norm of $M^{1}_{n-1}$ to deduce a similar result for all other vectors in each cluster. Naturally, this geometric approach is very involved, and leads to additional losses at various steps (which is why \cite{vershynin2014invertibility} obtains a worse bound on $q_n$ than \cref{thm:main-thm}).

In contrast, we overcome this obstacle with a completely novel and purely combinatorial approach of clustering vectors based on the residues of their coordinates modulo a large prime, and using a combinatorial notion due to Hal\'asz \cite{halasz1977estimates} to quantify the amount of additive structure in a vector (\cref{prop:structural}). In particular, with our approach, the analogue of the problem of `bounding the covering number of sub-level sets of regularized LCD' -- which constitutes a significant portion of \cite{vershynin2014invertibility} (see Section 7.1 there), is one of the key contributions of that work, and is also a major contributor to the sub-optimality of the final result -- can be solved more efficiently and with a short double-counting argument (see \cref{thm:counting-lemma}, which is based on joint work of the authors with Luh and Samotij in \cite{FJLS2018}, and \cref{corollary:counting}). 

It is worth mentioning that \cite{vershynin2014invertibility} provides bounds not just for the probability of singularity of $M_{n}$, but also for the probability that the `least singular value' of $M_{n}$ (as well as random matrices with more general entries) is `very small'. Very recent work \cite{jain2019combinatorial, jain2019b, jain2019smoothed} of the second named author shows how to develop the combinatorial ideas introduced in \cite{FJLS2018} (which we use here) in order to obtain quantitative control on the lower tail of the least singular value for a variety of random matrix models. We anticipate that the ideas in the present work can be combined with those in \cite{jain2019combinatorial, jain2019b, jain2019smoothed} to control the lower tail of the least singular value of symmetric random matrices as well.  

The rest of this paper is organized as follows. In \cref{sec:proof-strategy}, we discuss in detail the overall proof strategy leading to the reduction to the structural theorem; in \cref{sec:structural theorem}, we state and prove our structural theorem; and in \cref{sec:proof-main-thm}, we put everything together to quickly complete our proof. \\ 

{\bf Notation: } Throughout the paper, we will omit floors and ceilings when they make no essential difference. For convenience, we will also say `let $p = x$ be a prime', to mean that $p$ is an odd prime between $x$ and $2x$; again, this makes no difference to our arguments. As is standard, we will use $[n]$ to denote the discrete interval $\{1,\dots,n\}$. All logarithms are natural unless noted otherwise.  

\section{Proof strategy: reduction to the structural theorem}
\label{sec:proof-strategy}
In this section, we discuss the strategy underlying our proof of \cref{thm:main-thm}. The key conclusions are \cref{eqn:split-into-deg-nondeg} \cref{eqn:conclusion-degenerate-case}, and  \cref{eqn:conclusion-nondegerate}, which show that it suffices to prove the structural theorem in \cref{sec:structural theorem} in order to prove \cref{thm:main-thm}. 
\subsection{Preliminary reductions}
For any $n \in \N$ and $k\in [n]$, let $\Ev_{k}(n)$ denote the event that $M_n$ has rank exactly $k$, and let $\Ev_{\leq k}(n)$ denote the event that $M_n$ has rank at most $k$. Thus, our goal is to bound the probability of $\Ev_{\leq n-1}(n)$. The next lemma, which is due to Nguyen \cite{nguyen2012inverse}, shows that it suffices to bound the probability of $\Ev_{n-1}(n)$. 
\begin{lemma}[Lemma 2.1 in \cite{nguyen2012inverse}]
\label{lem:rank reduction}
For any $\ell \in [n-2]$,
\[\Pr\left[\Ev_{\ell}(n)\right] \leq 0.1 \times \Pr\left[\Ev_{2n-\ell-2}(2n-\ell-1)\right].\]
\end{lemma}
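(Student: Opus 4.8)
The plan is to realise $M_{2n-\ell-1}$ as a random bordering of $M_n$ and to track how the corank drops as rows and columns are adjoined. Write $m:=n-\ell-1$; since $\ell\le n-2$ we have $m\ge 1$, and $2n-\ell-1=n+m$. In terms of coranks, $\Ev_\ell(n)$ is the event $\{\operatorname{corank}(M_n)=m+1\}$, while $\Ev_{2n-\ell-2}(2n-\ell-1)$ is the event $\{\operatorname{corank}(M_{n+m})=1\}$. I would couple the two by letting $A$ be the top-left $n\times n$ principal submatrix of $M_{n+m}$: then $A$ is distributed as $M_n$, and the remaining $mn+\binom{m+1}{2}$ ``border'' entries of $M_{n+m}$ are i.i.d.\ uniform signs, independent of $A$. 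Revealing the border one row/column at a time produces a chain $A=A_0,A_1,\dots,A_m=M_{n+m}$ in which $A_k$ is obtained from $A_{k-1}$ by adjoining a fresh random sign vector $v_k$ as a new row and column, together with a random diagonal sign.

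The key linear-algebra input is how the rank behaves under such a bordering: for symmetric $S$, a vector $v$ and a scalar $d$, the matrix $\left(\begin{smallmatrix}S & v\\ v^{\top} & d\end{smallmatrix}\right)$ has rank $\rank(S)+2$ if $v\notin\operatorname{col}(S)$, and rank $\rank(S)$ or $\rank(S)+1$ if $v\in\operatorname{col}(S)$. (If $v=Su$ then row/column reduction gives $\left(\begin{smallmatrix}S & 0\\0 & d-u^{\top}Su\end{smallmatrix}\right)$; if $v\notin\operatorname{col}(S)$ then the first $n$ rows together with the last row are already independent.) Equivalently, each bordering step changes the corank by $-1,0$ or $+1$, and it drops the corank by exactly $1$ precisely when the new vector escapes the current column space. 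Hence, on the event $\{\operatorname{corank}(A)=m+1\}$ we have $\{\operatorname{corank}(M_{n+m})=1\}=\bigcap_{k=1}^{m}\{v_k\notin\operatorname{col}(A_{k-1})\}$ (no step can drop the corank by more than one), and on this event $\operatorname{col}(A_{k-1})$ has codimension $m+2-k\ge 2$ throughout.

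The second ingredient is elementary anti-concentration: a subspace $W\subsetneq\R^{d}$ of codimension $c$ contains at most $2^{d-c}$ of the $2^{d}$ sign vectors --- choose $c$ coordinates whose values are determined over $\R$ by the other $d-c$ --- so a uniform random sign vector lies in $W$ with probability at most $2^{-c}$. Applying this with $W=\operatorname{col}(A_{k-1})$ and conditioning on $A_{k-1}$, the vector $v_k$ escapes with conditional probability at least $1-2^{-(m+2-k)}$, uniformly in $A_{k-1}$. Multiplying along the chain,
\[
\Pr\!\left[\operatorname{corank}(M_{n+m})=1 \,\middle|\, \operatorname{corank}(A)=m+1\right]\ \ge\ \prod_{k=1}^{m}\bigl(1-2^{-(m+2-k)}\bigr)\ =\ \prod_{j=2}^{m+1}\bigl(1-2^{-j}\bigr)\ \ge\ \prod_{j\ge 2}\bigl(1-2^{-j}\bigr),
\]
a positive absolute constant. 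Therefore $\Pr[\Ev_{2n-\ell-2}(2n-\ell-1)]\ge\Pr[\operatorname{corank}(M_{n+m})=1,\ \operatorname{corank}(A)=m+1]$ is at least this constant times $\Pr[\operatorname{corank}(M_n)=m+1]=\Pr[\Ev_\ell(n)]$, which gives the lemma up to the value of the constant.

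I expect the only real subtlety to be the numerical constant $0.1$. The bordering argument is robust but, as stated, yields $\Pr[\Ev_\ell(n)]\le C\,\Pr[\Ev_{2n-\ell-2}(2n-\ell-1)]$ only with an absolute $C=\bigl(\prod_{j\ge 2}(1-2^{-j})\bigr)^{-1}$, which is slightly below $2$; moreover any lower bound for $\{\operatorname{corank}(M_{n+m})=1\}$ obtained purely from configurations whose top-left $n\times n$ block has corank $m+1$ cannot beat $C\ge 1$, since the per-step estimate $1-2^{-(m+2-k)}$ is attained exactly when $A_{k-1}$ has many disjoint pairs of equal or opposite columns --- the configurations that dominate $\{\operatorname{corank}(M_n)=m+1\}$. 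Pushing $C$ below $1$ therefore requires accounting for corank-$1$ matrices arising in other ways --- e.g.\ controlling how rarely deleting $m$ rows/columns from a corank-$1$ matrix drops the rank by the maximal $2m$, which is governed by the (typically large) support of the null vector. For the use made of this lemma later, however, any absolute constant in place of $0.1$ is enough, so I would simply carry the constant $C$.
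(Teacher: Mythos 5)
Your argument is essentially the paper's proof, just phrased in terms of coranks and column spaces rather than ranks and row spans: couple $M_n$ with the leading $n\times n$ block of $M_{2n-\ell-1}$, reveal the border one row/column at a time, and use Odlyzko's observation to show that each step drops the corank with conditional probability at least $1-2^{-\operatorname{corank}}$, uniformly over the conditioning. (You also spell out the bordering fact for symmetric matrices in more detail than the paper does, which is correct and harmless.)

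Your closing worry about the constant stems from a typo in the statement, not a gap in your proof. The paper's own displayed chain establishes precisely
$\Pr\bigl[\Ev_{2n-\ell-2}(2n-\ell-1)\mid\Ev_\ell(n)\bigr]\geq 0.1$,
which after multiplying by $\Pr[\Ev_\ell(n)]$ and passing to the marginal gives
$\Pr[\Ev_\ell(n)]\leq 10\cdot\Pr\bigl[\Ev_{2n-\ell-2}(2n-\ell-1)\bigr]$.
So the factor $0.1$ in the statement is a typo for $10$ (equivalently, the $0.1$ should multiply the left-hand side). Your constant $C=\bigl(\prod_{j\geq 2}(1-2^{-j})\bigr)^{-1}\approx 1.73$ therefore proves more than is needed --- it is even slightly better than the paper's written product, which carries an innocuous off-by-one in the exponents and gives $\approx 1/0.289$. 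The final paragraph speculating about how to push $C$ below $1$ is moot: no constant below $1$ is required here, and the argument in the paper does not claim one either.
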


The proof of this lemma uses the following simple observation due to Odlyzko \cite{Odl}:
\begin{observation}
\label{obs:odl}
  Let $V$ be any subspace of $\mathbb{R}^n$ of dimension at most $\ell$. Then, $|V\cap \{\pm 1\}^n|\leq 2^\ell$. 
\end{observation}

\begin{proof} [Proof of \cref{lem:rank reduction}]
It suffices to show that for any $\ell \leq n-2$, 
\begin{equation}
\label{eqn:rank-n+1}
\Pr\left[\Ev_{\ell+2}(n+1) \mid \Ev_{\ell}(n)\right]\geq 1-2^{-n+\ell}.
\end{equation}
Indeed, iterating this equation shows that
\begin{align*}
\Pr[\Ev_{2n-\ell-2}(2n-\ell-1)\mid \Ev_{\ell}(n)]
& \geq \prod_{j=1}^{n-\ell -1}\Pr\left[\Ev_{\ell + 2j}(n+j) \mid \Ev_{\ell + 2j-2}(n+j-1)\right] \\
& \geq \prod_{j=1}^{n-\ell-1}(1-2^{-n+\ell+j}) \geq 0.1,
\end{align*}
which gives the desired conclusion. 

In order to prove \cref{eqn:rank-n+1}, consider the coupling of $M_n$ and $M_{n+1}$ where $M_n$ is the top left $n\times n$ sub-matrix of $M_{n+1}$. 
Suppose $M_n$ has rank $\ell$, and let $V(M_n)$ be the ($\ell$-dimensional) subspace spanned by its rows. By \cref{obs:odl}, $|V(M_n)\cap \{\pm 1\}^{n}|\leq 2^\ell$. Therefore, the probability that the vector formed by the first $n$ coordinates of the last row of $M_{n+1}$ lies in $V(M_n)$ is at most $2^{-n+\ell}$. If this vector does not lie in $V(M_n)$, then the symmetry of the matrix also shows that the last column of $M_{n+1}$ does not lie in the span of the first $n$ columns of $M_{n+1}$, so that the rank of $M_{n+1}$ exceeds the rank of $M_{n}$ by $2$. 
\end{proof}

The following lemma, also due to Nguyen, allows us to reduce to the case where the rank of the $(n-1) \times (n-1)$ symmetric matrix obtained by removing the first row and the first column of $M_n$ is at least $n-2$.  
\begin{lemma}[Lemma 2.3 in \cite{nguyen2012inverse}]
\label{lem:second reduction}
Assume that $M_{n}$ has rank $n-1$. Then, there exists $i\in[n]$
such that the removal of the $i^{th}$ row and the $i^{th}$ column
of $M_{n}$ results in a symmetric matrix $M_{n-1}$ of rank at least
$n-2$. 
\end{lemma}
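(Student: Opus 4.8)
The plan is to pick the index $i$ by inspecting the (one-dimensional) kernel of $M_n$. Since $M_n$ has rank $n-1$, its kernel is spanned by a single nonzero vector $v=(v_1,\dots,v_n)$, and since $M_n$ is symmetric we also have $v^{T}M_n=0$, i.e.\ $v$ spans the left kernel as well. Because $v\neq 0$, there is some $i\in[n]$ with $v_i\neq 0$, and I claim that deleting the $i$-th row and $i$-th column of $M_n$ produces a \emph{nonsingular} $(n-1)\times(n-1)$ symmetric matrix $M_{n-1}$; this is of rank $n-1\geq n-2$, which is more than enough.

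To verify nonsingularity of $M_{n-1}$, I would argue by contradiction. Suppose $M_{n-1}u'=0$ for some nonzero vector $u'$ indexed by $[n]\setminus\{i\}$, and extend $u'$ to $u\in\R^{n}$ by setting its $i$-th coordinate equal to $0$. By construction $(M_n u)_j=0$ for every $j\neq i$. To pin down the remaining coordinate, use the left kernel relation: $0=(v^{T}M_n)u=\sum_{j}v_j(M_n u)_j=v_i(M_n u)_i$, and since $v_i\neq 0$ this forces $(M_n u)_i=0$ too. Hence $M_n u=0$, so $u$ is a scalar multiple of $v$; but $u_i=0\neq v_i$ forces that scalar to be $0$, so $u=0$, contradicting $u'\neq 0$. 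Therefore $M_{n-1}$ is nonsingular.

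The only subtlety — and the single place where symmetry is used — is the identification $v^{T}M_n=0$ with the right kernel, which is immediate from $M_n^{T}=M_n$; the rest is elementary linear algebra with no estimates, so I do not anticipate a genuine obstacle. (One can package the same computation through the adjugate instead: $\adj(M_n)$ is a symmetric rank-one matrix, hence equals $c\,vv^{T}$ for some $c\neq 0$, and its $(i,i)$ entry is precisely the minor $\det(M_{n-1})$, so $\det(M_{n-1})=c\,v_i^{2}\neq 0$ whenever $v_i\neq 0$.) The conceptual content is simply that the kernel vector $v$ tells us which symmetric deletion to perform.
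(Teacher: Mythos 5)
Your proof is correct, and it is somewhat sharper than what the paper proves: you establish that for a suitable choice of $i$ the deleted matrix $M_{n-1}$ is actually \emph{nonsingular} (rank exactly $n-1$), whereas the lemma only asks for rank at least $n-2$. The underlying idea coincides with the paper's, but you make it more explicit. The paper simply says ``without loss of generality the last $n-1$ rows of $M_n$ are independent'' (which, after a simultaneous row-and-column relabeling to preserve symmetry, amounts to choosing an index $i$ in the support of the kernel vector $v$, since $v$ encodes the unique linear relation among the rows of a rank-$(n-1)$ symmetric matrix), deletes the first row to get an $(n-1)\times n$ matrix of full row rank $n-1$, and then observes that deleting one column can drop the rank by at most one, giving rank $\geq n-2$. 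You instead extend a putative kernel vector $u'$ of $M_{n-1}$ by a zero in position $i$, use symmetry ($v^{T}M_n = 0$) to force the remaining coordinate of $M_n u$ to vanish, and conclude $u = 0$ from $u_i = 0 \neq v_i$; essentially the same index selection, but pushed one step further to yield nonsingularity. Both arguments are valid; yours buys a stronger conclusion at the cost of a few more lines, and your adjugate remark ($\adj(M_n) = c\,vv^{T}$ with $c\neq 0$, so $\det M_{n-1} = c\,v_i^2 \neq 0$) is a nice compact way to see the same thing.
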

\begin{proof}
Without loss of generality, we can assume that the last $n-1$ rows of $M_n$ are independent. Therefore, the matrix $M_{n-1}$, which is obtained by removing the first row and first column of $M_n$ has rank at least $n-2$. 
\end{proof}
As a simple corollary of the above lemma, we obtain the following:
\begin{corollary}
\label{corollary:remove-first-row}
For $i \in [n]$, let $\Ev^{i}_{n-1}(n)$ denote the event that $M_n$ has rank $n-1$, and the symmetric matrix obtained by removing the $i^{th}$ row and the $i^{th}$ column of $M_n$ has rank at least $n-2$. Then,
$$\Pr\left[\Ev_{n-1}(n)\right] \leq n\Pr\left[\Ev^{1}_{n-1}(n)\right].$$
\end{corollary}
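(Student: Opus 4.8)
The plan is to derive this directly from \cref{lem:second reduction} together with a union bound and a symmetry argument. First I would observe that, on the event $\Ev_{n-1}(n)$, \cref{lem:second reduction} guarantees the existence of \emph{some} index $i \in [n]$ such that deleting the $i^{th}$ row and $i^{th}$ column of $M_n$ leaves a symmetric matrix of rank at least $n-2$. In other words, the event $\Ev_{n-1}(n)$ is contained in the union $\bigcup_{i=1}^{n} \Ev^{i}_{n-1}(n)$. Applying the union bound immediately gives $\Pr[\Ev_{n-1}(n)] \leq \sum_{i=1}^{n} \Pr[\Ev^{i}_{n-1}(n)]$.

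The remaining point is that all $n$ summands are equal. This follows because the distribution of $M_n$ is invariant under any simultaneous permutation of its rows and columns: if $\sigma$ is the transposition swapping $1$ and $i$, then conjugating $M_n$ by the corresponding permutation matrix, $P_\sigma M_n P_\sigma^{\mathsf T}$, has the same law as $M_n$ (the upper-diagonal entries are i.i.d., so permuting indices only permutes these i.i.d.\ entries), preserves symmetry, preserves the rank of $M_n$, and sends the principal submatrix obtained by deleting row/column $i$ to the one obtained by deleting row/column $1$. Hence $\Pr[\Ev^{i}_{n-1}(n)] = \Pr[\Ev^{1}_{n-1}(n)]$ for every $i \in [n]$, and combining this with the union bound yields $\Pr[\Ev_{n-1}(n)] \leq n\Pr[\Ev^{1}_{n-1}(n)]$, as claimed.

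I do not anticipate any real obstacle here; the only thing to be slightly careful about is articulating the permutation-invariance cleanly (it is a conjugation, not a one-sided permutation, which is precisely what keeps the matrix symmetric and the deleted principal submatrix of the right form). Everything else is a one-line containment and a union bound.
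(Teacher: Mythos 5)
Your proof is correct and follows exactly the paper's argument: apply \cref{lem:second reduction} to obtain the containment $\Ev_{n-1}(n) \subseteq \bigcup_{i=1}^{n}\Ev^{i}_{n-1}(n)$, use a union bound, and then invoke the permutation-invariance of the law of $M_n$ under conjugation to equate the $n$ summands. The only difference is cosmetic: you spell out the conjugation symmetry slightly more explicitly than the paper's terse ``by symmetry.''
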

\begin{proof}
Suppose that $M_n$ has rank $n-1$. By \cref{lem:second reduction}, there exists an $i\in [n]$ for which the $(n-1)\times (n-1)$ matrix obtained by deleting the $i^{th}$ row and $i^{th}$ column has rank at least $n-2$. Moreover, by symmetry, 
$$\Pr[\Ev^i_{n-1}(n)]=\Pr[\Ev^1_{n-1}(n)] \text{ for all }i\in [n].$$ Therefore, by the union bound,
$$\Pr[\Ev_{n-1}(n)]= \Pr\left[\cup_{i=1}^{n}\Ev^{i}_{n-1}(n)\right] \leq  \sum_{i=1}^n\Pr[\Ev^i_{n-1}(n)]=n\Pr[\Ev_{n-1}^1(n)].$$ 
\end{proof}
Let $M^{1}_{n-1}$ denote the $(n-1)\times (n-1)$ symmetric matrix obtained by deleting the first row and first column of $M_n$. Let $\Dg(n-1)$ denote the `degenerate' event that $M^{1}_{n-1}$ has rank $n-2$, and let $\Ndg(n-1)$ denote the `non-degenerate' event that $M^{1}_{n-1}$ has full rank $n-1$.  
By definition, 
$$\Ev^{1}_{n-1}(n) = \left(\Ev^{1}_{n-1}(n)\cap\Dg(n-1)\right) \sqcup \left(\Ev^{1}_{n-1}(n)\cap \Ndg(n-1)\right),$$
and hence, 
\begin{equation}
\label{eqn:split-into-deg-nondeg}
\Pr\left[\Ev^{1}_{n-1}(n)\right] = \Pr\left[\Ev^{1}_{n-1}(n)\cap\Dg(n-1)\right]+ \Pr\left[\Ev^{1}_{n-1}(n)\cap\Ndg(n-1)\right].
\end{equation}
It is thus enough to bound each of the above two summands. 
\subsection{Bounding $\Pr\left[\Ev^{1}_{n-1}(n)\cap\Dg(n-1)\right]$}
\label{sec:degenerate-case}
Let $\boldsymbol{x}:= (x_1,\dots,x_n)$ denote the first row of $M_n$. It follows from Laplace's formula for the determinant that
\begin{equation}
\label{eqn:laplace-expansion}
\det(M_n)
= x_1\det\left(M^{1}_{n-1}\right) - \sum_{2\leq i,j \leq n}c_{ij}x_i x_j,
\end{equation}
where $c_{ij}$ denotes the cofactor of $M^{1}_{n-1}$ obtained by removing its $(i-1)^{st}$ row and $(j-1)^{st}$ column. In order to deal with $M_n \in \Ev^{1}_{n-1}(n)\cap\Dg(n-1)$, we use the following observation due to Nguyen (see Section 9 in \cite{nguyen2012inverse}).
\begin{lemma}
\label{lemma:reduction-to-linear}
For every $M_n \in \Ev^{1}_{n-1}(n)\cap\Dg(n-1)$, there exists some $\lambda:=\lambda\left(M^{1}_{n-1}\right) \in \Q\setminus\{0\}$ and some $\boldsymbol{a}:= \boldsymbol{a}\left(M^{1}_{n-1}\right)=(a_2,\dots,a_n)\in \Z^{n-1}\setminus\{\boldsymbol{0}\}$ such that
\begin{equation}
\label{eqn:conclusion-1}
M^{1}_{n-1} \boldsymbol{a} =\boldsymbol{0},
\end{equation}
and
\begin{equation}
\label{eqn:conclusion-2}
\det(M_n) = \lambda \left(\sum_{2\leq i \leq n} a_ix_i\right)^{2}.
\end{equation}
\end{lemma}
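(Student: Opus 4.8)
The plan is to exploit the fact that $M^1_{n-1}$ is a symmetric matrix of rank exactly $n-2$, so its kernel is one-dimensional and, since the matrix has integer entries, is spanned by a single integer vector. Concretely, I would first pick $a = (a_2,\dots,a_n) \in \Z^{n-1}\setminus\{\mathbf{0}\}$ spanning $\ker(M^1_{n-1})$; such an integer vector exists because the kernel of a rational matrix is a rational subspace, and we may clear denominators. This immediately gives \cref{eqn:conclusion-1}. By symmetry of $M^1_{n-1}$, the vector $a$ also spans the left kernel, i.e. $a^{T}M^1_{n-1} = 0$ as well; this is the key structural point that makes the determinant collapse to a perfect square.

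Next I would analyze the cofactor matrix $C := (c_{ij})_{2\le i,j\le n}$. The standard adjugate identity gives $C^{T} M^1_{n-1} = M^1_{n-1} C^{T} = \det(M^1_{n-1})\, I = 0$, since $M^1_{n-1}$ is singular. Hence every column of $C^{T}$ (equivalently every row of $C$) lies in $\ker(M^1_{n-1}) = \Span(a)$, and similarly every column of $C$ lies in the left kernel, which is again $\Span(a)$. Therefore there are scalars such that $c_{ij} = \mu\, a_i a_j$ for some single rational constant $\mu$: writing row $i$ of $C$ as $\beta_i a^{T}$ and column $j$ of $C$ as $a\gamma_j$, comparing entries forces $\beta_i = \mu a_i$ and $\gamma_j = \mu a_j$ for a common $\mu$ (one has to check $C \ne 0$, which holds because $\rank(M^1_{n-1}) = n-2$ exactly, so at least one $(n-2)\times(n-2)$ minor is nonzero). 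Substituting into Laplace's expansion \cref{eqn:laplace-expansion} and using that $\det(M^1_{n-1}) = 0$, we get
\[
\det(M_n) = -\sum_{2\le i,j\le n} \mu a_i a_j x_i x_j = -\mu\left(\sum_{2\le i\le n} a_i x_i\right)^{2},
\]
so \cref{eqn:conclusion-2} holds with $\lambda = -\mu$.

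It remains to argue $\lambda \ne 0$, equivalently $\mu \ne 0$, equivalently $C \ne 0$; as noted this follows from $\rank(M^1_{n-1}) = n-2$. I would also double-check the sign/normalization bookkeeping in the adjugate identity (whether one wants $C$ or $C^{T}$, and the exact relation $(\adj B)_{ij} = (-1)^{i+j}\det(B \text{ with row } j \text{ and column } i \text{ removed})$ versus the cofactor convention used for $c_{ij}$ in \cref{eqn:laplace-expansion}), but this is routine and does not affect the final form of the statement. The main obstacle — really the only nontrivial point — is establishing the rank-one structure $c_{ij} = \mu a_i a_j$ from the two-sided kernel condition together with $C \ne 0$; everything else is immediate once symmetry of $M^1_{n-1}$ is invoked.
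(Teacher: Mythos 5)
Your proof is correct and follows essentially the same route as the paper's: both rest on the observation that the cofactor/adjugate matrix has rank at most $1$ (since its rows and columns lie in the one-dimensional kernel), is nonzero (since $\rank(M^{1}_{n-1})=n-2$ exactly), and is symmetric, hence equals $\mu a a^{T}$ for a rational $\mu\neq 0$ and integer $a$ spanning the kernel. The only cosmetic difference is the order of presentation: the paper first writes $\adj(M^{1}_{n-1})=\lambda a a^{T}$ as a rank-one symmetric integer matrix and then reads off that $a$ lies in the kernel, whereas you first pick $a$ spanning the kernel and then derive $c_{ij}=\mu a_i a_j$ by a slightly more explicit row-by-column entry comparison (which, as you note, could be shortened by observing that $C$ is itself symmetric because $M^{1}_{n-1}$ is). Either ordering is fine, and the sign bookkeeping you flag is indeed a non-issue because $\adj(B)=\adj(B)^{T}$ for symmetric $B$.
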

\begin{proof}
Let $\adj\left(M^{1}_{n-1}\right)$ denote the adjugate matrix of $M^{1}_{n-1}$; note that this is an integer-valued symmetric matrix since $M^{1}_{n-1}$ is an integer-valued symmetric matrix. 
Since $M^{1}_{n-1}$ is of rank $n-2$, its kernel is of rank $1$. Moreover, the equation 
\begin{equation}
\label{eqn: A adj(A) = det(A)I}
M^{1}_{n-1} \adj\left(M^{1}_{n-1}\right) = \det\left(M^{1}_{n-1}\right)I_{n-1}
\end{equation}
shows that every column of $\adj\left(M^{1}_{n-1}\right)$ is in the kernel of $M^{1}_{n-1}$ as $\det(M^{1}_{n-1}) = 0$ by assumption. It follows that the matrix $\adj\left(M^{1}_{n-1}\right)$ is an integer-valued symmetric matrix of rank $1$, which cannot be zero since $M^{1}_{n-1}$ is of rank $n-2$. Hence, there exists some $\lambda \in \Q\setminus \{0\}$ and a vector $\boldsymbol{a} = (a_2,\dots, a_n)^{T} \in \Z^{n-1}\setminus \{\0\}$ such that
\begin{equation}
\label{eqn:rank-1-symmetric-adjoint}
\adj\left(M^{1}_{n-1}\right) = \lambda \boldsymbol{a} \boldsymbol{a}^{T}.
\end{equation}
In particular, every column of $\adj\left(M^{1}_{n-1}\right)$ is equal to a multiple of the vector $\boldsymbol{a}$. By considering any column which is a non-zero multiple of $\boldsymbol{a}$, \cref{eqn: A adj(A) = det(A)I} along with $\det\left(M^{1}_{n-1}\right) = 0$ gives \cref{eqn:conclusion-1}.
Moreover, by writing the entries of the adjugate matrix in terms of the cofactors, we see that \cref{eqn:rank-1-symmetric-adjoint} is equivalent to the following: for all $2 \leq i,j \leq n$:
\begin{equation*}
c_{ij} =\lambda a_i a_j. 
\end{equation*}
Substituting this in \cref{eqn:laplace-expansion} and using $\det\left(M^{1}_{n-1}\right)=0$ gives \cref{eqn:conclusion-2}.  
\end{proof}
Before explaining how to use \cref{lemma:reduction-to-linear}, we need the following definition. 
\begin{definition}[Atom probability] 
\label{defn:atom-prob}
Let $\ring$ be an arbitrary ring (with a unit element). For a vector $\boldsymbol{a} := (a_1,\dots,a_n) \in \ring^{n}$, we define its $\mu$-atom probability by
$$\rho^{\ring}_{\mu}(\boldsymbol{a}) := \sup_{c \in \ring}\Pr_{x_1^{\mu},\dots,x_n^{\mu}}\left[a_1 x_1^{\mu} + \dots + a_n x_n^{\mu} = c\right],$$
where the $x_i^{\mu}$'s are i.i.d. random variables taking on the value $0$ with probability $\mu$ and the values $\pm 1$, each with probability $(1-\mu)/2$. 
\end{definition}
\begin{remark}
We will often refer to the $0$-atom probability simply as the atom probability, and denote it by $\rho^{\ring}(\boldsymbol{a})$ instead of $\rho^{\ring}_{0}(\boldsymbol{a})$. Similarly, we will denote $x_i^0$ simply as $x_i$. 
\end{remark}
Although we will not need them in this subsection, we will later make use of the following two simple lemmas about the atom probability. The first lemma shows that the $\mu$-atom probability of a vector is bounded above by the $\mu$-atom probability of any of its restrictions.  
\begin{lemma}
\label{lemma:sbp-monotonicity}
Let $\boldsymbol{a} \in \ring^{n}$, and let $\boldsymbol{a}|_{U_1}$ denote the restriction of $\boldsymbol{a}$ to $U_1 \subseteq [n]$. Then, 
$$\rho^{\ring}_{\mu}\left(\boldsymbol{a}\right) \leq \rho^{\ring}_{\mu}\left(\boldsymbol{a}|_{U_1}\right).$$
\end{lemma}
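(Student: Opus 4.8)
The statement to prove is Lemma~\ref{lemma:sbp-monotonicity}: for $a \in \ring^n$ and $U_1 \subseteq [n]$, we have $\rho^{\ring}_{\mu}(a) \leq \rho^{\ring}_{\mu}(a|_{U_1})$.

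\textbf{Plan.} The proof is a straightforward conditioning argument. Write $U_2 := [n] \setminus U_1$, so that the linear form $\sum_{i=1}^n a_i x_i^{\mu}$ splits as $S_1 + S_2$ where $S_1 := \sum_{i \in U_1} a_i x_i^{\mu}$ and $S_2 := \sum_{i \in U_2} a_i x_i^{\mu}$. Since the $x_i^{\mu}$ are independent, $S_1$ and $S_2$ are independent random variables (taking values in $\ring$). Fix any $c \in \ring$. The idea is to condition on the realization of the variables $(x_i^{\mu})_{i \in U_2}$, i.e. on the value of $S_2$: once $S_2 = s$ is fixed, the event $S_1 + S_2 = c$ becomes the event $S_1 = c - s$, and $c - s$ is again some element of $\ring$, hence its probability is at most $\rho^{\ring}_{\mu}(a|_{U_1})$ by the very definition of the atom probability of the restricted vector.

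\textbf{Key steps.} First I would introduce the splitting $[n] = U_1 \sqcup U_2$ and the decomposition $\sum_i a_i x_i^{\mu} = S_1 + S_2$ with $S_1, S_2$ independent. Second, for the fixed $c \in \ring$ achieving (or approaching) the supremum in $\rho^{\ring}_{\mu}(a)$, apply the law of total probability over the possible values $s$ of $S_2$:
\[
\Pr[S_1 + S_2 = c] = \sum_{s} \Pr[S_2 = s] \, \Pr[S_1 = c - s],
\]
using independence to factor the joint probability. Third, bound each inner term $\Pr[S_1 = c-s] = \Pr\big[\sum_{i \in U_1} a_i x_i^{\mu} = c - s\big] \leq \sup_{c' \in \ring} \Pr\big[\sum_{i \in U_1} a_i x_i^{\mu} = c'\big] = \rho^{\ring}_{\mu}(a|_{U_1})$. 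Fourth, substitute back to get $\Pr[S_1 + S_2 = c] \leq \rho^{\ring}_{\mu}(a|_{U_1}) \sum_s \Pr[S_2 = s] = \rho^{\ring}_{\mu}(a|_{U_1})$, and finally take the supremum over $c \in \ring$ on the left-hand side.

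\textbf{Main obstacle.} There is essentially no obstacle here; the only mild point of care is the case $U_2 = [n]$ (so $U_1 = \emptyset$), where $S_1$ is the empty sum, equal to $0 \in \ring$; then $\rho^{\ring}_{\mu}(a|_{\emptyset})$ should be interpreted as $1$ (the supremum over $c' \in \ring$ of $\Pr[0 = c']$, which is $1$, attained at $c' = 0$), and the inequality holds trivially. One should also note that the summation over $s$ ranges over a finite set (each $x_i^{\mu}$ has finite support, so $S_2$ takes finitely many values), so there are no convergence issues. If $\ring$ is infinite this is still fine since the sum is finitely supported. Everything else is a one-line conditioning computation.
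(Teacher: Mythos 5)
Your proof is correct and is essentially identical to the paper's: both condition on the coordinates outside $U_1$, apply the law of total probability, bound the inner conditional probability by $\rho^{\ring}_{\mu}(a|_{U_1})$, and conclude. The only cosmetic difference is that you take the supremum over $c$ at the end rather than fixing an optimal $c^*$ at the outset (your phrasing is slightly more careful about whether the supremum is attained, but since the $x_i^{\mu}$ have finite support this is immaterial).
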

\begin{proof}
Let $c^{*}:=\arg\max_{c \in \ring}\Pr_{\boldsymbol{x}^{\mu}}\left[\sum_{i\in [n]}a_{i}x^{\mu}_{i}=c\right]$. Then, 
\begin{align*}
\rho^{\ring}_{\mu}(\boldsymbol{a})
&= \Pr_{\boldsymbol{x}^{\mu}}\left[\sum_{i\in[n]}a_ix_i^{\mu}=c^{*}\right] = \Pr_{\boldsymbol{x}^{\mu}}\left[\sum_{i\in[U_1]}a_ix_i^{\mu}=c^{*}-\sum_{i\in [\overline{U_1}]}a_ix_i^{\mu}\right]\\
&= \E_{(x^{\mu}_i)_{i\in \overline{U_1}}}\left[\Pr_{(x_i^{\mu})_{i\in [U_1]}}\left[\sum_{i\in[U_1]}a_ix_i^{\mu}=c^{*}-\sum_{i\in [\overline{U_1}]}a_ix_i^{\mu}\right]\right]\\
&\leq \E_{(x^{\mu}_i)_{i\in \overline{U_1}}}\left[\rho^{\ring}_{\mu}(\boldsymbol{a}|_{U_1})\right] = \rho^{\ring}_{\mu}(\boldsymbol{a}|_{U_1}),
\end{align*}
where the third equality follows from the law of total probability, and the fourth inequality follows from the definition of $\rho^{\ring}_{\mu}(\boldsymbol{a}|_{U_1})$.
\end{proof}

The second lemma complements \cref{lemma:sbp-monotonicity}, and shows that the $\mu$-atom probability cannot increase too much if, instead of the original vector, we work with its restriction to a sufficiently large subset of coordinates. 
\begin{lemma} 
\label{lemma:restriction-atom-prob}
Let $\boldsymbol{a} \in \ring^{n}$, and let $\boldsymbol{a}|_{U_1}$ denote the restriction of $\boldsymbol{a}$ to $U_1$. Then, 
$$\rho^{\ring}_{\mu}\left(\boldsymbol{a}|_{U_1}\right) \leq \max\left\{\mu,\frac{1-\mu}{2}\right\}^{-|U_2|}\rho^{\ring}_{\mu}\left(\boldsymbol{a}\right).$$
\end{lemma}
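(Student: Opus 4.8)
The plan is to condition on the coordinates of $x^{\mu}$ lying outside $U_1$, pinning them to their \emph{most likely} joint realization, and then to observe that on this event the sum over all of $[n]$ differs from the sum over $U_1$ by a fixed (deterministic) element of $\ring$. Since the atom probability $\rho^{\ring}_{\mu}(a)$ is a supremum over \emph{all} target values $c \in \ring$, we are free to shift the target by this fixed element, which is exactly what makes the estimate go through. Here $U_2$ denotes the complement $[n]\setminus U_1$, so that $|U_1|+|U_2| = n$.

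Concretely, I would first choose $c^{*} \in \ring$ achieving the supremum in the definition of $\rho^{\ring}_{\mu}(a|_{U_1})$, so that $\rho^{\ring}_{\mu}(a|_{U_1}) = \Pr_{x^{\mu}}\big[\sum_{i\in U_1}a_i x_i^{\mu} = c^{*}\big]$. Next, since each $x_i^{\mu}$ attains its single most likely value with probability exactly $\max\{\mu,(1-\mu)/2\}$, independence gives a tuple $(\varepsilon_i)_{i\in U_2}\in\{0,\pm 1\}^{U_2}$ with $\Pr_{x^{\mu}}[\,x_i^{\mu}=\varepsilon_i \text{ for all } i\in U_2\,] = \max\{\mu,(1-\mu)/2\}^{|U_2|}$. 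Setting $c := c^{*} + \sum_{i\in U_2} a_i \varepsilon_i$ and using the independence of $(x_i^{\mu})_{i\in U_1}$ and $(x_i^{\mu})_{i\in U_2}$, we get
\[
\rho^{\ring}_{\mu}(a) \;\geq\; \Pr_{x^{\mu}}\!\left[\sum_{i\in[n]} a_i x_i^{\mu} = c\right] \;\geq\; \Pr_{x^{\mu}}\!\left[\sum_{i\in U_1} a_i x_i^{\mu} = c^{*}\right]\cdot \Pr_{x^{\mu}}\!\left[x_i^{\mu}=\varepsilon_i \ \forall\, i\in U_2\right] \;=\; \rho^{\ring}_{\mu}(a|_{U_1})\cdot \max\!\left\{\mu,\tfrac{1-\mu}{2}\right\}^{|U_2|},
\]
and rearranging yields the claimed inequality.

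There is no genuine obstacle here; the proof is a short conditioning argument. The only points requiring a little care are that one must commit to the specific target $c$ that depends on the chosen realization $(\varepsilon_i)_{i\in U_2}$ (rather than hoping the bound survives for $c^{*}$ itself), and that $\max\{\mu,(1-\mu)/2\}$ really is the largest atom weight of a single coordinate, so that the most likely $U_2$-realization has probability $\max\{\mu,(1-\mu)/2\}^{|U_2|}$ and not something smaller.
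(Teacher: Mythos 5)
Your proof is correct and follows essentially the same route as the paper: you condition on a specific realization of the $U_2$-coordinates and absorb the resulting deterministic offset into the target $c$ (using that the atom probability is a supremum over $c$). The only cosmetic difference is that the paper separately considers the all-zero and all-one realizations and then takes the maximum of the two resulting bounds, whereas you directly commit to the pointwise most likely realization; the two are equivalent.
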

\begin{proof} 
Let $c_{0}:=\arg\max_{c \in \ring}\Pr_{\boldsymbol{x}^{\mu}}\left[\sum_{i\in U_{1}}a_{i}x^{\mu}_{i}=c\right]$
where the $x^{\mu}_{i}$'s are as in \cref{defn:atom-prob}, and let $c_{1}:=c_{0}+\sum_{i\in U_{2}}a_{i}$.
Then, 
\begin{eqnarray*}
\Pr_{\boldsymbol{x}^{\mu}}\left[\sum_{i\in[n]}a_{i}x^{\mu}_{i}=c_{0}\right] & \geq & \Pr_{(x^{\mu}_{i})_{i\in U_{1}}}\left[\sum_{i\in U_{1}}a_{i}x^{\mu}_{i}=c_{0}\right]\prod_{j\in U_{2}}\Pr_{x^{\mu}_{j}}\left[x^{\mu}_{j}=0\right]\\
 & \geq & \rho^{\ring}_{\mu}\left(\boldsymbol{a}|_{U_{1}}\right)\mu^{|U_{2}|},
\end{eqnarray*}
and 
\begin{eqnarray*}
\Pr_{\boldsymbol{x}^{\mu}}\left[\sum_{i\in[n]}a_{i}x^{\mu}_{i}=c_{1}\right] & \geq & \Pr_{(x^{\mu}_{i})_{i\in U_{1}}}\left[\sum_{i\in U_{1}}a_{i}x^{\mu}_{i}=c_{0}\right]\prod_{j\in U_{2}}\Pr_{x^{\mu}_{j}}\left[x^{\mu}_{j}=1\right]\\
 & \geq & \rho^{\ring}_{\mu}\left(\boldsymbol{a}|_{U_{1}}\right)\left(\frac{1-\mu}{2}\right)^{|U_{2}|}.
\end{eqnarray*}
Taking the maximum of the two expressions gives
\[
\rho^{\ring}_{\mu}(\boldsymbol{a})\geq\max\left\{ \mu,\frac{1-\mu}{2}\right\} ^{|U_{2}|}\rho^{\ring}_{\mu}\left(\boldsymbol{a}|_{U_{1}}\right),
\]
and by rearranging we obtain the desired conclusion. 
\end{proof}

Returning to the goal of this subsection, for $0 < \rho \leq 1$, let $\Gv_{\rho}(n-1)$ denote the event -- depending only on $M^{1}_{n-1}$ -- that \emph{every} non-zero integer null vector of $M^{1}_{n-1}$ has atom probability (in $\Z$) at most $\rho$. Then, we have   
\begin{align}
\label{eqn:conclusion-degenerate-case}
\Pr_{M_n}\left[\Ev^{1}_{n-1}(n) \cap \Dg(n-1)\right]
&\leq \Pr_{M_n}\left[\Ev^{1}_{n-1}(n) \cap \Dg(n-1) \cap \Gv_\rho(n-1) \right] + \Pr_{M^{1}_{n-1}}\left[\overline{\Gv_{\rho}(n-1)}\right] \nonumber \\
&\leq \Pr_{M^{1}_{n-1},\boldsymbol{x}}\left[\left(\sum_{2\leq i \leq n}a_i\left(M^{1}_{n-1}\right)x_i = 0\right) \cap \Gv_{\rho}(n-1) \right] + \Pr_{M^{1}_{n-1}}\left[\overline{\Gv_{\rho}(n-1)}\right] \nonumber \\ 
\leq \sum_{A_{n-1}\in \Gv_{\rho}(n-1)}&\Pr_{\boldsymbol{x}}\left[\left(\sum_{2\leq i \leq n}a_i\left(A_{n-1}\right)x_i = 0\right)\right]\Pr_{M^{1}_{n-1}}\left[M^{1}_{n-1} = A_{n-1}\right] + \Pr_{M^1_{n-1}}\left[\overline{\Gv_{\rho}(n-1)}\right]\nonumber \\
&\leq \rho + \Pr_{M^{1}_{n-1}}\left[\overline{\Gv_\rho(n- 1)}\right],
\end{align}
where the second line follows from \cref{eqn:conclusion-2}; the third line is trivial; and the last line follows from the definition of $\Gv_\rho(n-1)$. \cref{thm:structural} shows that `typically', every non-zero integer null vector of $M^{1}_{n-1}$ has `small' atom probability, and will be used to bound the right hand side of \cref{eqn:conclusion-degenerate-case}.  

\subsection{Bounding $\Pr\left[\Ev^{1}_{n-1}(n)\cap\Ndg(n-1)\right]$}
Once again, we start with \cref{eqn:laplace-expansion}. However, for $M_{n-1} \in \Ndg(n-1)$, $\adj\left(M^{1}_{n-1}\right)$ is invertible, and we no longer have the factorization of the determinant in \cref{lemma:reduction-to-linear} available to us. In this case, in order to reduce to a problem involving the anti-concentration of a linear form, we will follow an idea by Costello, Tao and Vu \cite{costello2006random}. The basic tool is the following decoupling inequality from \cite{costello2006random}.

\begin{lemma}[Lemma 4.7 in \cite{costello2006random}]
\label{lemma:decoupling-CTV}
Let $Y$ and $Z$ be independent random variables, and $E=E(Y,Z)$ be an event depending on $Y$ and $Z$. Then,
\begin{equation*}
\Pr[E(Y,Z)]^{4} \leq \Pr[E(Y,Z) \cap E(Y',Z) \cap E(Y,Z') \cap E(Y',Z')], 
\end{equation*}
where $Y'$ and $Z'$ denote independent copies of $Y$ and $Z$, respectively. 
\end{lemma}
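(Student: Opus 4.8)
The plan is to prove the inequality by two successive applications of the Cauchy--Schwarz (equivalently, Jensen's) inequality, conditioning first on $Z$ and then on the pair $(Y,Y')$. I would begin by using the independence of $Y$ and $Z$ to write
\[
\Pr[E(Y,Z)] = \E_Z\left[\Pr_Y[E(Y,Z)]\right].
\]
Setting $f(Z) := \Pr_Y[E(Y,Z)]$ and applying Jensen to $t \mapsto t^2$ gives $\Pr[E(Y,Z)]^2 = \E_Z[f(Z)]^2 \leq \E_Z[f(Z)^2]$. The key observation is that, since $Y$ and a fresh independent copy $Y'$ are i.i.d.\ given $Z$, one has $f(Z)^2 = \Pr_{Y,Y'}[E(Y,Z)\cap E(Y',Z)]$, and hence
\[
\Pr[E(Y,Z)]^2 \;\leq\; \Pr[E(Y,Z)\cap E(Y',Z)] \;=\; \E_{Y,Y'}\!\left[\Pr_Z[E(Y,Z)\cap E(Y',Z)]\right].
\]

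Next I would repeat exactly the same maneuver, now conditioning on the pair $(Y,Y')$. Writing $g(Y,Y') := \Pr_Z[E(Y,Z)\cap E(Y',Z)]$ and squaring the previous display, Jensen gives $\Pr[E(Y,Z)]^4 \leq \E_{Y,Y'}[g(Y,Y')]^2 \leq \E_{Y,Y'}[g(Y,Y')^2]$. Since $Z$ and a fresh independent copy $Z'$ are i.i.d.\ given $(Y,Y')$, we get $g(Y,Y')^2 = \Pr_{Z,Z'}[E(Y,Z)\cap E(Y',Z)\cap E(Y,Z')\cap E(Y',Z')]$, and taking the expectation over $(Y,Y')$ yields precisely
\[
\Pr[E(Y,Z)]^4 \;\leq\; \Pr[E(Y,Z)\cap E(Y',Z)\cap E(Y,Z')\cap E(Y',Z')],
\]
which is the claim.

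There is no genuine obstacle here; the whole argument is a clean double conditioning, and once the relevant product structures are fixed every equality above is just Fubini. The only point requiring care is the bookkeeping of conditional independence: at the first step one must introduce $Y'$ as an independent copy of $Y$ that is \emph{also} independent of $Z$, so that $(Y,Y',Z)$ has product law, and similarly $Z'$ at the second step, so that $(Y,Y',Z,Z')$ has product law; with that in place the two Jensen steps and the two ``unsquaring'' identities go through verbatim. I would state the lemma for a general event $E$, and remark that in our application $Y$ and $Z$ will be disjoint groups of the Bernoulli entries, so the independence hypotheses hold automatically.
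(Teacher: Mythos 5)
Your proof is correct and is essentially the paper's argument: two applications of Jensen's inequality via conditioning, once in each variable. The only differences are cosmetic — you condition on $Z$ first and then on $(Y,Y')$, whereas the paper conditions on $Y$ first and then on $(Z,Z')$ (equivalent by the symmetry of the roles of $Y$ and $Z$), and you phrase things with conditional expectations/Fubini in full generality while the paper restricts to finitely-supported random variables and writes out the sums explicitly.
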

Next, we explain how to use the above decoupling lemma for our purpose. For this discussion, recall \cref{eqn:laplace-expansion}. Fix a non-trivial partition $[n] = U_1 \sqcup U_2$. Let $Y:= (x_i)_{i \in U_1} $ and $Z:= (x_i)_{i \in U_2}$. Let $E_{\alpha, \textbf{c}} := E_{\alpha, \textbf{c}}(Y,Z)$ denote the event that
$$Q_{\alpha, \textbf{c}}(Y,Z):=\alpha - \sum_{2 \leq i,j\leq n}c_{ij}x_ix_j = 0,$$
where $\alpha$ and $\textbf{c}:=(c_{ij})_{2\leq i,j \leq n}$ are fixed. Then, the previous lemma shows that
$$\Pr\left[E_{\alpha,\textbf{c}}(Y,Z)\right]^{4} \leq \Pr\left[E_{\alpha,\textbf{c}}(Y,Z) \cap E_{\alpha,\textbf{c}}(Y',Z) \cap E_{\alpha,\textbf{c}}(Y,Z') \cap E_{\alpha,\textbf{c}}(Y',Z')\right].$$
On the other hand, whenever the event on the right holds, we also have
$$Q_{\alpha, \textbf{c}}(Y,Z) - Q_{\alpha, \textbf{c}}(Y',Z) - Q_{\alpha,\textbf{c}}(Y,Z') + Q_{\alpha,\textbf{c}}(Y,Z)=0.$$
Direct computation shows that the left hand side equals
\begin{align*}
R_{\textbf{c}}
&:= \sum_{i \in U_1}\sum_{j \in U_2}c_{ij}(x_i - x'_i)(x_j' - x_j)= \sum_{i \in U_1} R_i(x_i - x'_i),
\end{align*}
where $x'_i$ denotes an independent copy of $x_i$, and $R_i$ denotes the random sum $\sum_{j \in U_2}c_{ij}(x_j' - x_j)$. To summarize, we have deduced the following.
\begin{corollary} 
\label{corollary:decoupling-conclusion}
Let  $U_1 \sqcup U_2$ be an arbitrary non-trivial partition of $[n]$. Let $\boldsymbol{w} = (w_1,\dots,w_{|U_1|})$ be the random vector with coordinates $w_i := x_i - x'_i$. Then, with notation as above, and for any $(n-1) \times (n-1)$ symmetric matrix $A_{n-1}$, we have
$$\Pr_{M_n}\left[\Ev^{1}_{n-1}(n) \big\vert M^{1}_{n-1} = A_{n-1}\right] \leq \Pr_{\boldsymbol{x},\boldsymbol{x'}}\left[\sum_{i \in U_1}R_i w_i =0 \big\vert M^{1}_{n-1} = A_{n-1}\right]^{1/4}.$$
\end{corollary}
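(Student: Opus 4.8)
\emph{Proof proposal.} The approach is to combine Laplace's expansion \cref{eqn:laplace-expansion} with the decoupling inequality of \cref{lemma:decoupling-CTV}; most of the ingredients already appear in the discussion preceding the statement, and the plan is to assemble them while carefully tracking the conditioning on $M^{1}_{n-1}$. First I would fix a symmetric matrix $A_{n-1}$ and condition on $M^{1}_{n-1} = A_{n-1}$, so that the cofactor tuple $\textbf{c} := (c_{ij})_{2\leq i,j \leq n}$ becomes a fixed integer quantity determined by $A_{n-1}$, and the remaining randomness is the first row $x = (x_1,\dots,x_n)$ of $M_n$, with i.i.d.\ uniform $\pm 1$ coordinates. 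Since $\Ev^{1}_{n-1}(n) \subseteq \{\det(M_n) = 0\}$ and the quadratic form in \cref{eqn:laplace-expansion} does not involve $x_1$, conditioning additionally on $x_1$ turns $\alpha := x_1\det(A_{n-1})$ into a fixed scalar and shows that $\Ev^{1}_{n-1}(n)$ is contained in the event $E_{\alpha, \textbf{c}}(Y,Z)$, where $Y := (x_i)_{i\in U_1}$ and $Z := (x_i)_{i \in U_2}$ are independent. Averaging over $x_1$ by the law of total probability and passing to the supremum over $\alpha$ gives
\[
\Pr_{M_n}\left[\Ev^{1}_{n-1}(n) \,\middle|\, M^{1}_{n-1} = A_{n-1}\right] \;\leq\; \sup_{\alpha}\; \Pr\left[E_{\alpha, \textbf{c}}(Y,Z) \,\middle|\, M^{1}_{n-1} = A_{n-1}\right].
\]

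Next I would apply \cref{lemma:decoupling-CTV} for each fixed $\alpha$ (with $Y,Z$ as above and $Y',Z'$ their independent copies), obtaining, still conditionally on $M^{1}_{n-1} = A_{n-1}$,
\[
\Pr\left[E_{\alpha, \textbf{c}}(Y,Z)\right]^{4} \;\leq\; \Pr\left[E_{\alpha, \textbf{c}}(Y,Z) \cap E_{\alpha, \textbf{c}}(Y',Z) \cap E_{\alpha, \textbf{c}}(Y,Z') \cap E_{\alpha, \textbf{c}}(Y',Z')\right].
\]
The one computation is to verify that on this four-fold intersection the alternating sum $Q_{\alpha,\textbf{c}}(Y,Z) - Q_{\alpha,\textbf{c}}(Y',Z) - Q_{\alpha,\textbf{c}}(Y,Z') + Q_{\alpha,\textbf{c}}(Y',Z')$ vanishes, and that it equals, up to an irrelevant factor of $2$, the bilinear form $R_{\textbf{c}} = \sum_{i \in U_1} R_i w_i$ with $R_i := \sum_{j \in U_2} c_{ij}(x_j' - x_j)$ and $w_i := x_i - x_i'$: here the constant $\alpha$ cancels and, crucially using the symmetry $c_{ij} = c_{ji}$, so do the pure $U_1$-- and pure $U_2$--quadratic parts, leaving only the cross terms, exactly as recorded before the corollary. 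Consequently the four-fold intersection is contained in the event $\{\sum_{i \in U_1} R_i w_i = 0\}$, which does not involve $x_1$, so the dependence on $\alpha$ disappears; monotonicity of probability then gives $\Pr[E_{\alpha, \textbf{c}}(Y,Z) \mid M^{1}_{n-1} = A_{n-1}]^{4} \leq \Pr[\sum_{i\in U_1} R_i w_i = 0 \mid M^{1}_{n-1} = A_{n-1}]$ uniformly in $\alpha$. Taking fourth roots and combining with the previous display completes the argument.

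There is no genuine obstacle here: the only points needing care are (i) the bookkeeping that allows $x_1\det(A_{n-1})$ to play the role of the fixed constant $\alpha$ in the decoupling framework, which is handled by conditioning on $x_1$ and then taking a supremum (after which the final bound is uniform in $\alpha$, hence in $x_1$); and (ii) the verification of the bilinear identity for the alternating sum, whose only subtlety is the cancellation powered by the symmetry of the cofactor matrix. I would expect step (ii) --- the cancellation bookkeeping in the alternating sum --- to be the most error-prone, though it remains entirely routine.
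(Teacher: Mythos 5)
Your proposal is correct and takes essentially the same approach the paper does: the paper gives the argument in the discussion immediately preceding the corollary (fix $\alpha$ and $\textbf{c}$, apply \cref{lemma:decoupling-CTV}, observe that the alternating sum of the $Q_{\alpha,\textbf{c}}$'s reduces to the bilinear form $\sum_{i\in U_1}R_i w_i$) and then simply states ``we have deduced the following.'' You correctly fill in the details the paper leaves implicit --- conditioning on $x_1$ so that $\alpha = x_1\det(A_{n-1})$ is a constant and then noting the final bound is $\alpha$-independent, and the harmless factor of $2$ arising from the symmetry $c_{ij}=c_{ji}$ --- and you also silently repair the paper's typo where the fourth term of the alternating sum should read $Q_{\alpha,\textbf{c}}(Y',Z')$ rather than $Q_{\alpha,\textbf{c}}(Y,Z)$.
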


Using this corollary, we thus see that
\begin{align}
\label{eqn:decoupling-conclusion}
\Pr_{M_{n}}\left[\Ev_{n-1}^{1}(n)\cap\Ndg(n-1)\right]^{4} & = \left(\sum_{A_{n-1}\in\Ndg(n-1)}\Pr_{M_{n}}\left[\Ev_{n-1}^{1}(n)|M_{n-1}^{1}=A_{n-1}\right]\Pr\left[M_{n-1}^{1}=A_{n-1}\right]\right)^{4} \nonumber \\
 & \leq \sum_{A_{n-1}\in\Ndg(n-1)}\Pr_{M_{n}}\left[\Ev_{n-1}^{1}(n)|M_{n-1}^{1}=A_{n-1}\right]^{4}\Pr\left[M_{n-1}^{1}=A_{n-1}\right] \nonumber \\
 & \leq \sum_{A_{n-1}\in\Ndg(n-1)}\Pr_{\boldsymbol{x},\boldsymbol{x'}}\left[\sum_{i\in U_{1}}R_{i}w_{i}=0|M_{n-1}^{1}=A_{n-1}\right]\Pr\left[M_{n-1}^{1}=A_{n-1}\right] \nonumber \\
 & = \Pr_{\boldsymbol{x},\boldsymbol{x'},M^{1}_{n-1}}\left[\left(\sum_{i\in U_{1}}R_{i}w_{i}=0\right)\cap \Ndg(n-1)\right],
\end{align}
where the second line follows from Jensen's inequality. Hence, we have reduced the problem of bounding $\Pr\left[\Ev_{n-1}^{1}(n)\cap\Ndg(n-1)\right]$ to a linear anti-concentration problem.  

In order to use \cref{eqn:decoupling-conclusion} profitably, we will rely on the following simple, but crucial, observation about the vector $\boldsymbol{R}:=(R_2,\dots,R_n) \in \Z^{n-1}$, where $R_i$ is defined as above. 
\begin{lemma}
\label{lemma:R-orthogonal}
$\boldsymbol{R}$ is orthogonal to at least $n - 1 - |U_2|$ rows of $M^{1}_{n-1}$. 
\end{lemma}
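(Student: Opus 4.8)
The plan is to exploit the fact that $R$ lies in the span of the columns of the cofactor matrix indexed by $U_2$, together with the defining identity of the adjugate.

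First, I would rewrite $R$ in terms of columns of the cofactor matrix. Recall that $R_i = \sum_{j\in U_2} c_{ij}(x'_j - x_j)$, so, writing $C := (c_{ij})_{2\le i,j\le n}$ for the cofactor matrix of $M^1_{n-1}$ and $C_j$ for its $j$-th column, we have $R = \sum_{j \in U_2}(x'_j - x_j)\, C_j$. Thus $R$ is a fixed linear combination of the vectors $\{C_j : j \in U_2\cap\{2,\dots,n\}\}$.

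Next, I would invoke the adjugate identity $M^1_{n-1}\,\adj\!\left(M^1_{n-1}\right) = \det\!\left(M^1_{n-1}\right) I_{n-1}$, which holds for every square matrix. Since $M^1_{n-1}$ is symmetric, $\adj\!\left(M^1_{n-1}\right)$ is symmetric and its $(i,j)$ entry is exactly $c_{ij}$; in particular its $j$-th column is $C_j$. Reading off the $j$-th column of the identity gives $M^1_{n-1}\, C_j = \det\!\left(M^1_{n-1}\right) e_j$, which says precisely that $C_j$ is orthogonal to the $k$-th row of $M^1_{n-1}$ for every $k \neq j$. (In the nondegenerate regime $\Ndg(n-1)$ under consideration one has $\det\!\left(M^1_{n-1}\right)\neq 0$, so $C_j$ is not orthogonal to its own $j$-th row, but this will not be needed here.)

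Finally, combining the two observations: for every index $k \in \{2,\dots,n\}\setminus U_2$ and every $j \in U_2$ we have $k \neq j$, hence $C_j$ is orthogonal to the $k$-th row of $M^1_{n-1}$, and therefore so is the linear combination $R$. The number of such indices $k$ is $(n-1) - |U_2\cap\{2,\dots,n\}| \ge (n-1) - |U_2|$, which yields the claim. I expect no genuine obstacle in this argument; the only point requiring care is the index bookkeeping between the partition $[n] = U_1 \sqcup U_2$ and the rows/columns of the $(n-1)\times(n-1)$ matrix $M^1_{n-1}$, which are naturally indexed by $\{2,\dots,n\}$.
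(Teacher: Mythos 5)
Your proof is correct and takes exactly the same route as the paper: write $R$ as a linear combination $\sum_{j\in U_2}(x'_j - x_j)C_j$ of the adjugate columns indexed by $U_2$, and then invoke $M^1_{n-1}\,\adj(M^1_{n-1}) = \det(M^1_{n-1})I_{n-1}$ to conclude that each $C_j$, and hence $R$, is orthogonal to every row of $M^1_{n-1}$ whose index lies outside $U_2$. The paper's version is more terse but identical in substance; your extra care with the index bookkeeping between $[n]$ and $\{2,\dots,n\}$ is appropriate and resolves a minor notational sloppiness in the original.
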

\begin{proof}
Observe that $\boldsymbol{R}$ is a linear combination of the columns of $\adj\left(M^{1}_{n-1}\right)$ corresponding to the indices in $U_2$. By \cref{eqn: A adj(A) = det(A)I}, each of these columns is orthogonal to each of the rows with indices in $[n-1] \cap U_1$; therefore, the same is true for $\boldsymbol{R}$. Since $|[n-1] \cap U_1| \geq n-1 - |U_2|$, we are done.  
\end{proof}

For $0<\delta,\gamma \leq 1$, let $\Hv_{\delta,\gamma n}(n-1)$ denote the event -- depending only on $M^{1}_{n-1}$ -- that \emph{every} integer non-zero vector which is orthogonal to at least $(1-\gamma)n$ rows of $M^{1}_{n-1}$ has $\mu$-atom probability (in $\Z$) at most $\delta$, uniformly for all $0\leq \mu \leq 1/2$. Let $U_1 \sqcup U_2$ be a partition of $[n]$ where $U_2:= [\gamma n - 1]$. Then, with the vector $\boldsymbol{R}$ defined as above, we have
\begin{align}
\label{eqn: breaking-according-to-event-H}
\Pr_{\boldsymbol{x},\boldsymbol{x'}, M^{1}_{n-1}}\left[\left(\sum_{i \in U_1}R_i w_i=0\right) \cap \Ndg(n-1)\right]
& \leq \Pr_{\boldsymbol{x},\boldsymbol{x'}, M^{1}_{n-1}}\left[\left(\sum_{i \in U_1}R_i w_i=0\right) \cap \Hv_{\delta, \gamma n}(n-1)\cap \Ndg(n-1) \right] \nonumber \\
& \hspace{.5cm} + \Pr_{ M^{1}_{n-1}}\left[\overline{\Hv_{\delta,\gamma n}(n-1)}\right]\nonumber \\
\leq \sum_{A_{n-1} \in \Hv_{\delta, \gamma n}(n-1)\cap \Ndg(n-1)}&\Pr_{\boldsymbol{w}}\left[\sum_{i\in U_1}R_i(A_{n-1})w_i=0\right]\Pr_{ M^{1}_{n-1}}\left[M^{1}_{n-1} = A_{n-1}\right] \nonumber \\
& \hspace{.5cm}+ \Pr_{ M^{1}_{n-1}}\left[\overline{\Hv_{\delta,\gamma n}(n-1)}\right]. 
\end{align}
As in \cref{sec:degenerate-case}, we will provide an upper bound on $\Pr_{\boldsymbol{w}}\left[\sum_{i\in U_1}R_i(A_{n-1})w_i=0\right]$ which is uniform in the choice of $A_{n-1} \in \Hv_{\delta,\gamma n}(n-1)\cap \Ndg(n-1)$. We start by observing that
\begin{align}
\label{eqn:linear-independendence-consequence}
\Pr_{\boldsymbol{w}}\left[\sum_{i\in U_1}R_i(A_{n-1})w_i=0\right]
& \leq \Pr_{\boldsymbol{w}}\left[\left(\sum_{i\in U_1}R_i(A_{n-1})w_i=0\right) \cap \left(\boldsymbol{R}(A_{n-1})\neq \boldsymbol{0}\right) \right] + \Pr_{\boldsymbol{w}}\left[\boldsymbol{R}(A_{n-1})=\boldsymbol{0}\right] \nonumber \\
&= \Pr_{\boldsymbol{w}}\left[\left(\sum_{i\in U_1}R_i(A_{n-1})w_i=0\right) \cap \left(\boldsymbol{R}(A_{n-1})\neq \boldsymbol{0}\right) \right] + 2^{-|U_2|} \nonumber \\
&\leq \Pr_{\boldsymbol{w}}\left[\left(\sum_{i\in U_1}R_i(A_{n-1})w_i=0\right) \cap \left(\boldsymbol{R}(A_{n-1})\neq \boldsymbol{0}\right) \right] + 2^{-\gamma n+1}.
\end{align}
To see why the second equality holds, observe as before that $$\boldsymbol{R}(A_{n-1}):= \sum_{j \in U_2}w_j\textbf{col}_j\left(\adj\left(M^{1}_{n-1}\right)\right),$$ 
where $\textbf{col}_j\left(\adj\left(M^{1}_{n-1}\right)\right)$ denotes the $j^{th}$ column of $\adj\left(M^{1}_{n-1}\right)$. Since $A_{n-1} \in \Ndg(n-1)$, it follows that these columns are linearly independent, and hence $\boldsymbol{R}(A_{n-1}) = \boldsymbol{0}$ if and only if $w_j = 0$ for all $j \in |U_2|$, which happens precisely with probability $2^{-|U_2|}$.   

It remains to bound the first summand in \cref{eqn:linear-independendence-consequence}. For this, note that since $A_{n-1} \in \Hv_{\delta,\gamma n}(n-1)$ and $|U_2| = \gamma n -1$, \cref{lemma:R-orthogonal}, together with $\boldsymbol{R}(A_{n-1})\neq \boldsymbol{0}$, shows that $\rho^{\Z}_{1/2}\left(\boldsymbol{R}(A_{n-1})\right) \leq \delta$. Then, by \cref{lemma:restriction-atom-prob}, it follows that $\rho^{\Z}_{1/2}\left(\boldsymbol{R}(A_{n-1})|_{U_1}\right) \leq 2^{|U_2|}\delta \leq 2^{\gamma n}\delta$. Finally, combining this with \cref{eqn:decoupling-conclusion} and \cref{eqn: breaking-according-to-event-H}, we have
\begin{equation}
\label{eqn:conclusion-nondegerate}
\Pr_{M_{n}}\left[\Ev_{n-1}^{1}(n)\cap\Ndg(n-1)\right] \leq \left(2^{\gamma n}\delta + 2^{-\gamma n + 1} + \Pr_{M^{1}_{n-1}}\left[\overline{\Hv_{\delta,\gamma n}(n-1)}\right] \right)^{\frac{1}{4}}. 
\end{equation}
\section{The structural theorem}
\label{sec:structural theorem}
This section is devoted to the proof of our structural theorem, which is motivated by \cref{eqn:conclusion-degenerate-case,eqn:conclusion-nondegerate}. 
\subsection{Statement and initial reductions}
In order to state the structural theorem, we need the following definition. 
\begin{definition}
For $0\leq \alpha := \alpha(n), \beta := \beta(n) \leq 1$, let $\Hv_{\alpha, \beta n}(n)$ denote the event that every integer non-zero vector which is orthogonal to at least $(1-\beta)n$ many rows of $M_{n}$ has $\mu$-atom probability (in $\Z$) at most $\alpha$, uniformly for all $0 \leq \mu \leq 1/2$. 
\end{definition}

\begin{theorem}
\label{thm:structural}
Let $\alpha(n) = 2^{-n^{1/4}\sqrt{\log{n}}/64}$, $\beta(n) = n^{-3/4}\sqrt{\log{n}}/128$, and $n \in \N$ be sufficiently large. Then, 
$$\Pr_{M_n}\left[\overline{\Hv_{\alpha,\beta n}(n)}\right] \leq 2^{-n/32}.$$
\end{theorem}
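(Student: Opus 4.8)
The plan is to prove that few vectors have large atom probability, that each such vector is orthogonal to $(1-\beta)n$ rows of $M_n$ only with tiny probability, and then take a union bound. Since there are infinitely many integer vectors, the first step is to pass to $\F_p$ for a suitable prime $p$ (say $\alpha^{-1} \ll p \ll 2^{n/100}$, for concreteness $p \asymp \alpha^{-2}$), clustering integer vectors by their residues mod $p$. Writing $\bar v := v \bmod p$, three observations reduce \cref{thm:structural} to a finite union bound: (i) $\rho^{\Z}_\mu(v) \le \rho^{\F_p}_\mu(\bar v)$ for every $\mu$, since an integer combination that vanishes remains $0$ mod $p$; (ii) if $v \in \Z^n\setminus\{0\}$ witnesses $\overline{\Hv_{\alpha,\beta n}(n)}$ then, after dividing $v$ by the largest power of $p$ dividing all of its coordinates, we obtain another witness with $\bar v \ne 0$ (this rescaling affects neither $\rho^\Z_\mu$ nor the set of rows orthogonal to $v$); (iii) orthogonality over $\Z$ implies orthogonality over $\F_p$. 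Call $\bar v \in \F_p^n\setminus\{0\}$ \emph{structured} if $\rho^{\F_p}_\mu(\bar v) > \alpha$ for some $\mu \in [0,1/2]$. Then
\[
\Pr\!\left[\overline{\Hv_{\alpha,\beta n}(n)}\right] \;\le\; \sum_{\bar v \textrm{ structured}} q(\bar v), \qquad
q(\bar v) := \Pr_{M_n}\!\left[\bar v \textrm{ is orthogonal mod } p \textrm{ to at least } (1-\beta)n \textrm{ rows of } M_n\right],
\]
and the supremum over $\mu$ is harmless (absorb it into the counting step below, or take an extra union bound over a fine net of $\mu$).

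Next I would estimate the two factors. For the number of structured vectors, this is where Hal\'asz's inequality over $\F_p$ enters, as the introduction advertises: by \cref{prop:structural} a structured $\bar v$ carries a large amount of additive structure (many bounded-length signed subsums vanishing mod $p$), and by the counting lemma \cref{thm:counting-lemma} together with \cref{corollary:counting} --- proved by a short double-counting argument rather than a geometric covering --- the number of $\bar v \in \F_p^n$ with $\rho^{\F_p}_\mu(\bar v) \ge \sigma$ (for some $\mu$) is at most roughly $p^{O(1)}\,(C/(\sigma\sqrt n))^{\,n}$, far below the trivial bound $\binom{n}{\alpha^{-2}}p^{\alpha^{-2}}$ coming from support size alone; the key feature is the $n^{-1/2}$ saving per coordinate. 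For $q(\bar v)$ with $\bar v$ fixed, I would expose $M_n$ one row at a time. Permuting rows and columns of $M_n$ simultaneously leaves its law unchanged, so I may first reorder the coordinates of $\bar v$; I would choose the order so that every suffix $\bar v|_{\{k+1,\dots,n\}}$ has small atom probability (possible because structured vectors are extremely structured --- $\rho^{\F_p}(\bar v) \ge \alpha$ forces $\bar v$ to be, up to a small GAP, nearly constant --- and using \cref{lemma:sbp-monotonicity} and the submultiplicativity $\rho^{\F_p}(\bar v|_{U_1})\,\rho^{\F_p}(\bar v|_{U_2}) \le \rho^{\F_p}(\bar v)$). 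Conditioning on rows $1,\dots,k-1$, the fresh part of $\langle\bar v, \textrm{row } k\rangle$ is a signed sum over that suffix, so the conditional probability that row $k$ is orthogonal to $\bar v$ is at most $\rho^{\F_p}(\bar v|_{\{k+1,\dots,n\}})$; since at least $(1-\beta)n$ orthogonal rows force at most $\beta n$ non-orthogonal ones, a binomial-tail estimate along this filtration yields $q(\bar v) \le \binom{n}{\beta n}\prod \rho^{\F_p}(\bar v|_{\textrm{suffix of length }m})$ over a suitable $(1-\beta)n$ of the suffix lengths, which comes out to $q(\bar v) \le 2^{O(n)}\,\rho^{\F_p}(\bar v)^{(1-o(1))\,n}$.

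Finally I would put these together with a dyadic decomposition over $t$ with $\rho^{\F_p}(\bar v) \in [2^{-t-1}, 2^{-t})$, $1\le t \le \log_2(1/\alpha)$:
\[
\sum_{\bar v \textrm{ structured}} q(\bar v) \;\le\; 2^{O(n)} \sum_{t} N_t\, 2^{-t(1-o(1))n}, \qquad N_t := \#\{\bar v \in \F_p^n : \rho^{\F_p}_\mu(\bar v) \ge 2^{-t-1}\textrm{ for some }\mu\}.
\]
Here $q(\bar v)$ contributes $\rho^{\F_p}(\bar v)^{(1-o(1))n} = 2^{-t(1-o(1))n}$ while $N_t \le p^{O(1)}(C/\sqrt n)^n\,2^{tn}$, so each term is at most $p^{O(1)}(C'/\sqrt n)^n$ times a geometric factor in $t$; the $n^{-n/2}$ saving defeats the $2^{O(n)}$ losses, the $t$-sum converges, and --- crucially because $1/\alpha = 2^{n^{1/4}\sqrt{\log n}/64}$ is only sub-exponential, so the $o(n)$'s and the range of $t$ are genuinely negligible --- the total is at most $p^{O(1)}(C''/\sqrt n)^n \le 2^{-n/32}$ for $n$ large. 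The step I expect to be hardest is precisely obtaining the counting bound for $N_t$ with the per-coordinate $n^{-1/2}$ saving --- equivalently, bounding the covering number of the sub-level sets $\{\rho^{\F_p} \ge \sigma\}$ --- which is the analogue of the most lossy part of \cite{vershynin2014invertibility} and which the counting lemma of \cite{FJLS2018} is built to handle efficiently. A secondary, bookkeeping obstacle is the orthogonality estimate in the symmetric model: setting up the row-by-row exposure, picking the coordinate reordering adapted to $\bar v$ so that all suffixes are spread, and verifying that the accumulated $2^{O(n)}$ and sub-exponential losses ($\binom{n}{\beta n}$, the $\rho^{-\beta n}$-type and $\mu$-net factors, the power of $p$) remain well below $2^{-n/32}$.
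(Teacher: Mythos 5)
Your high-level plan matches the paper's: pass to $\F_p$ by rescaling to remove common factors (\cref{prop:structural}), use Hal\'asz's inequality over $\F_p$ (\cref{thm:halasz}) together with the counting lemma (\cref{thm:counting-lemma}, \cref{corollary:counting}) to show that structured vectors are few, bound the orthogonality probability for a fixed vector, and close with a union bound. You also correctly identify where the hard work lies, and you correctly anticipate that the counting lemma replaces the covering-number argument of Vershynin.

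The genuine gap is in the orthogonality estimate for the symmetric model. You propose to expose $M_n$ row by row and to reorder coordinates so that ``every suffix $\bar v|_{\{k+1,\dots,n\}}$ has small atom probability.'' This cannot be arranged: suffixes of length $O(1)$ have atom probability $\Omega(1)$, so the product over all $(1-\beta)n$ rows does not collapse to $\rho^{\F_p}(\bar v)^{(1-o(1))n}$. Moreover, the two inequalities you invoke to control the suffixes both point the wrong way. \cref{lemma:sbp-monotonicity} says $\rho(a) \le \rho(a|_{U})$, i.e.\ restrictions have \emph{larger} atom probability; and your ``submultiplicativity'' $\rho(v|_{U_1})\rho(v|_{U_2}) \le \rho(v)$ is true (take the product of maximizers) but, combined with monotonicity, gives only the vacuous $\rho(v|_{U_1}) \le 1$. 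You then fall back on an inverse Littlewood--Offord/GAP description of structured vectors, which is precisely the machinery the paper is designed to \emph{avoid} (it is the source of the polynomial bottleneck in Nguyen's approach). The paper instead picks, for each vector $a$ outside a bad set, a \emph{single} witness subset $\Lambda(a)$ of moderate size $s_1\le|\Lambda|\le s_2 = n^{3/4}\sqrt{\log n}$ on which Hal\'asz applies (\cref{corollary:halasz-usable}). Using \cref{obs:inject-permutation}, it conjugates by a permutation $\Sigma$ so that $\Lambda$ occupies the last $|\Lambda|$ coordinates, conditions on the top-left $(n-|\Lambda|)\times(n-|\Lambda|)$ block, and observes that the $n-|\Lambda|$ equations coming from the first $n-|\Lambda|$ rows then involve disjoint fresh entries of the top-right $(n-|\Lambda|)\times|\Lambda|$ rectangle, hence are mutually independent; this yields $\rho^{\F_p}(a|_\Lambda)^{\,n-|\Lambda|}$ with no suffix bookkeeping at all (\cref{corollary:prob-orth-fixed-vector}). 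If you want a row-by-row filtration argument you \emph{can} make it work, but only by placing a single good $\Lambda$ at the end and discarding the last $|\Lambda|$ rows (each earlier suffix then contains $\Lambda$, so monotonicity gives $\rho(a|_{\text{suffix}}) \le \rho(a|_\Lambda)$); you cannot demand spreadness of all suffixes, and you should not appeal to GAP structure.

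Two further points you should add. First, the counting lemma only controls vectors of large support; the paper first runs a separate, cruder argument (\cref{lemma:eliminate-small-support}, via the same permutation trick and an entropy bound) to rule out structured vectors of support size below $d = n^{2/3}$, and only then brings in Hal\'asz. Second, your level-set decomposition is in terms of $\rho^{\F_p}(\bar v)$ directly, whereas the paper's bad sets are parameterized by the Hal\'asz statistic $R_k^*$ on moderate subsets; the translation between the two via \cref{thm:halasz} is one-directional (small $R_k^*$ on some $\Lambda$ forces small $\rho$), and the paper's accounting is organized around the contrapositive, so your $N_t$ bound would need to be rederived in those terms.
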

Roughly, we will prove \cref{thm:structural} by taking a union bound, over the choice of the non-zero integer vector with large $\mu$-atom probability, of the probability that this vector is orthogonal to at least $(1-\beta)n$ many rows of $M_n$. However, there is an obstacle since, \emph{a priori}, this union bound is over an infinite collection of vectors. In order to overcome this, we will work instead with the coordinate-wise residues of the vector modulo a suitably chosen prime $p(n)$. 

In the next proposition, we make use of the event $\Hv^{p}_{\alpha, \beta n}(n)$, which is defined exactly as $\Hv_{\alpha, \beta n}(n)$, except that we work over $\F_p$ instead of the integers. 
\begin{proposition}
\label{prop:structural}
Let $\alpha(n) = 2^{-n^{1/4}\sqrt{\log{n}}/64}$ and $\beta(n) = n^{-3/4}\sqrt{\log{n}}/128$. Let $p(n) = 2^{n^{1/4}\sqrt{\log{n}}/32}$ be a prime, and let $n \in \N$ be sufficiently large. Then, 
$$\Pr_{M_n}\left[\overline{\Hv^{p}_{\alpha,\beta n}(n)}\right] \leq 2^{-n/32} .$$
\end{proposition}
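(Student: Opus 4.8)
The plan is to take a union bound over all "bad" vectors $v \in \F_p^n \setminus \{0\}$, where $v$ is bad if $\rho^{\F_p}_\mu(v) > \alpha$ for some $\mu \in [0,1/2]$, of the probability that $v$ is orthogonal (over $\F_p$) to at least $(1-\beta)n$ rows of $M_n$. Since the rows of $M_n$ are not independent (the matrix is symmetric), I would first pass to a sub-collection of rows that \emph{can} be treated as essentially independent: split $[n]$ into two halves, expose the diagonal block on the first half, and observe that the bottom-left $(n/2)\times(n/2)$ block has i.i.d.\ entries and its rows are conditionally independent given everything else. For a fixed bad $v$, being orthogonal to a row amounts to a single linear equation over $\F_p$ in the fresh entries of that row, and the probability this holds is controlled by $\rho^{\F_p}(v|_{\text{half}})$; here I would use \cref{lemma:sbp-monotonicity} (monotonicity of atom probability under restriction) to relate the atom probability of $v$ restricted to a half to that of $v$ itself, possibly after relabelling so that a constant fraction of the "mass" of $v$ survives in one half. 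Thus for each fixed bad $v$, the probability of being orthogonal to $\geq (1-\beta)n$ rows is, after a binomial tail estimate, at most roughly $\binom{n}{\beta n}\big(\max\{\rho,\tfrac12\}\big)^{(1/2-\beta)n}$ or similar — the precise form to be adjusted so that the small quantity $\beta n \approx n^{1/4}\sqrt{\log n}/128$ controls the entropy term $\binom{n}{\beta n} \leq 2^{H(\beta)n}$.

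The crux is then counting bad vectors, and this is where I expect the main difficulty to lie. The naive bound $|\F_p^n| = p^n = 2^{n^{5/4}\sqrt{\log n}/32}$ is far too large; the whole point is that vectors with large atom probability are rare. This is precisely the role of the Hal\'asz-type inequality over $\F_p$ (referenced as \cref{app:halasz}) combined with the counting lemma from \cite{FJLS2018} (\cref{thm:counting-lemma}, \cref{corollary:counting}): Hal\'asz's inequality says that if $\rho^{\F_p}_\mu(v)$ is large, then $v$ must have many additive relations among its coordinates, i.e.\ the number of solutions to $\pm v_{i_1} \pm \cdots \pm v_{i_k} = 0$ is large; the counting lemma then bounds the number of such structured vectors. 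Concretely, I would show that the number of $v \in \F_p^n$ with $\rho^{\F_p}_\mu(v) \geq \alpha = 2^{-n^{1/4}\sqrt{\log n}/64}$ is at most something like $(C/\alpha)^{\text{(something)}} \cdot p^{\text{(small)}}$, with the exponents tuned so that the total count is at most $2^{n^{1/4}\sqrt{\log n}\cdot n/\text{const}}$, i.e.\ comparable to $1/\alpha^n$ up to the right constant. One must be careful to handle the supremum over $\mu \in [0,1/2]$ — either by a union bound over a discretization of $\mu$, or by noting monotonicity/continuity properties of $\rho^{\F_p}_\mu$ in $\mu$ — and to handle the supremum over the shift $c \in \F_p$ in the definition of atom probability, which contributes only a factor of $p$.

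Putting it together: the union bound gives
\[
\Pr_{M_n}\!\left[\overline{\Hv^{p}_{\alpha,\beta n}(n)}\right] \;\leq\; (\#\text{bad }v) \cdot \max_{\text{bad }v}\Pr\!\left[v \perp \geq (1-\beta)n \text{ rows}\right] \;\leq\; 2^{O(n^{1/4}\sqrt{\log n}\cdot n)} \cdot 2^{-cn \cdot n^{1/4}\sqrt{\log n}/64},
\]
and the arithmetic of the constants — chosen so that the orthogonality exponent beats the counting exponent by a factor that leaves $2^{-n/32}$ to spare — is what forces the particular values $\alpha = 2^{-n^{1/4}\sqrt{\log n}/64}$, $\beta = n^{-3/4}\sqrt{\log n}/128$, $p = 2^{n^{1/4}\sqrt{\log n}/32}$. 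The main obstacle, to reiterate, is obtaining a sufficiently sharp count of vectors with large $\F_p$-atom probability via Hal\'asz plus the double-counting argument of \cite{FJLS2018}; the probabilistic half of the argument is comparatively routine once the conditional-independence trick for the symmetric matrix is in place.
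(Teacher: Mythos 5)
Your skeleton --- union bound over vectors with large $\F_p$-atom probability, conditional independence to control a per-vector orthogonality probability, and Hal\'asz plus the counting lemma to control the number of bad vectors --- matches the paper's. But there are two genuine gaps that would block the argument as written.

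First, your device for handling the symmetry of $M_n$ is different from the paper's and does not work as stated. You propose splitting $[n]$ into fixed halves $A \sqcup B$, conditioning on the diagonal blocks, and exploiting the fresh off-diagonal block so that each orthogonality constraint has probability at most $\rho^{\F_p}(v|_A)$. But $\rho^{\F_p}(v|_A)$ can equal $1$ (e.g.\ if $\supp(v)\subseteq B$), and neither restriction lemma rescues this: \cref{lemma:sbp-monotonicity} gives $\rho^{\F_p}(v)\leq\rho^{\F_p}(v|_A)$, i.e.\ the wrong direction (you need an \emph{upper} bound on the restricted atom probability), while \cref{lemma:restriction-atom-prob} gives an upper bound only at the cost of a factor exponential in the size of the discarded part of $\supp(v)$, which is ruinous when that is $\Theta(n)$. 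The paper circumvents this entirely via \cref{obs:inject-permutation}: conjugation by a permutation matrix preserves the distribution of $M_n$, so one moves a \emph{small} chosen subset $\Lambda\subseteq\supp(a)$ with $s_1\leq|\Lambda|\leq s_2\ll n$ to the last coordinates, conditions on the top-left $(n-|\Lambda|)\times(n-|\Lambda|)$ block, and obtains $n-|\Lambda|$ conditionally independent constraints each at probability at most $\rho^{\F_p}(a|_\Lambda)$. Crucially, $\rho^{\F_p}(a|_\Lambda)$ is \emph{not} related back to $\rho^{\F_p}(a)$ through a restriction lemma; it is bounded directly via Hal\'asz using the smallness of $R^*_k(a|_\Lambda)$, which is precisely what non-membership in $\Bad^d_{k,s_1,s_2,t+1}(n)$ guarantees for the chosen $\Lambda$.

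Second, your count of bad vectors is too coarse. You propose a single ``number of $v$ with $\rho^{\F_p}_\mu(v)\geq\alpha$'' bound multiplied by a worst-case orthogonality probability. The paper instead first disposes of small-support vectors via \cref{lemma:eliminate-small-support}, and then stratifies the remaining vectors by the level-set parameter $t$ in $\Bad^d_{k,s_1,s_2,t}(n)\setminus\Bad^d_{k,s_1,s_2,t+1}(n)$. This parameter simultaneously controls \emph{both} the per-vector orthogonality probability (via \cref{corollary:halasz-usable}, roughly $(Ct\sqrt{k}/(p\sqrt{s_1}))^{n-s_2}$) \emph{and} the number of vectors at that level (via \cref{corollary:counting}, roughly $p^{d+s_2}t^{-d(1-s_1/s_2)}$). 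The sum over $t\in[\sqrt{p},p]$ is small only because the probability factor decays in $t$ faster than the count grows; collapsing the stratification to a single bound, as you propose, loses this trade-off and the constants almost certainly do not close.
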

Before proving \cref{prop:structural}, let us quickly show how to deduce \cref{thm:structural} from it. 
\begin{proof}[Proof of \cref{thm:structural} given \cref{prop:structural}]
It suffices to show that $\overline{\Hv_{\alpha,\beta n}(n)} \subseteq \overline{\Hv^{p}_{\alpha, \beta n}(n)}$ for any prime $p$. To see this, suppose $M_n \in \overline{\Hv_{\alpha,\beta n}(n)}$. So, there exists an integer non-zero vector $\boldsymbol{a}$ which is orthogonal to at least $(1-\beta)n$ many rows of $M_n$ and has $\mu$-atom probability (in $\Z$) greater than $\alpha$, for some $0\leq \mu \leq 1/2$. Furthermore, by rescaling $\boldsymbol{a}$ if necessary, we may assume that $\text{gcd}(a_1,\ldots,a_n)=1$. Therefore, letting $\boldsymbol{a}_{p}$ be the image of $\boldsymbol{a}$ under the natural map from $\Z^{n} \to \F_{p}^{n}$, we see that $\boldsymbol{a}_{p} \in \F_{p}^{n} \setminus \{\0\}$  and is orthogonal (over $\F_p$) to (at least) the same $(1-\beta)n$ rows of $M_n$. Finally, $\rho^{\F_p}_{\mu}(\boldsymbol{a}_p) \geq \rho^{\Z}_{\mu}(\boldsymbol{a}) > \beta$, since for any $c \in \Z$, every solution $\boldsymbol{x} \in \{-1,0,1\}^{n}$ of $a_1x_1 + \dots  + a_n x_n = c$ over the integers is also a solution of the same equation in $\F_p$. Thus, the vector $\boldsymbol{a}_p$ witnesses that $M_n \in \overline{\Hv^{p}_{\alpha, \beta n}(n)}$.        
\end{proof}

The next lemma is the first step towards the proof of \cref{prop:structural} and motivates the subsequent discussion. In its statement, the support of a vector $\boldsymbol{a}=(a_1,\dots,a_n) \in \F_{p}^{n}$, denoted by $\supp(\boldsymbol{a})$, refers to the set of indices $i\in [n]$ such that $\boldsymbol{a}_i \neq 0 \mod p$.  
\begin{lemma}
\label{lemma:eliminate-small-support}
Let $1 \leq d \leq n$ be an integer, and let $p$ be a prime. Let $\Sv^{p}_{\geq d, \beta n}(n)$ denote the event that every vector in $\F_{p}^{n}\setminus\{\0\}$ which is orthogonal (over $\F_{p}$) to at least $(1-\beta)n$ many rows of $M_{n}$ has support of size at least $d$. Suppose further that $\beta \leq 1/2$, $d\leq n/2$, $p^{\beta n} \leq 2^{n/2}$, $p^{d} \leq 2^{n/8}$, $H(\beta) \leq 1/4$, and $H(d/n) \leq 1/16$ (where $H(x):= -x\log_{2}(x) - (1-x)\log_{2}(1-x)$ is the binary entropy function for $x\in [0,1]$).  Then, 
$$\Pr_{M_n}\left[\overline{\Sv^{p}_{\geq d,\beta n}(n)}\right] \leq 2^{-n/16}.$$
\end{lemma}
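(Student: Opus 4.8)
The plan is to take a union bound over all vectors $a \in \F_p^n \setminus \{\0\}$ with $|\supp(a)| \leq d-1$, bounding the probability that such a fixed vector is orthogonal (over $\F_p$) to at least $(1-\beta)n$ of the rows of $M_n$. The main subtlety is that the rows of $M_n$ are \emph{not} independent (because of symmetry), so I cannot simply say each row is orthogonal to $a$ with probability $1/p$ and multiply. First I would handle the counting: the number of vectors in $\F_p^n$ with support of size exactly $k$ is $\binom{n}{k}(p-1)^k$, so the number with $|\supp(a)| \leq d-1$ is at most $\sum_{k=1}^{d-1}\binom{n}{k}p^k \leq d\binom{n}{d}p^d \leq 2^{nH(d/n)}p^d \cdot n$ (using $d \leq n/2$ so the binomial tail is dominated by its last term, up to the factor $d \leq n$). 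By the hypotheses $H(d/n) \leq 1/16$ and $p^d \leq 2^{n/8}$, this is at most (roughly) $2^{n/16 + n/8 + o(n)} = 2^{3n/16 + o(n)}$.

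Next, for a fixed such $a$, I need to bound $\Pr_{M_n}[\,a \text{ is orthogonal to} \geq (1-\beta)n \text{ rows of } M_n\,]$. Here I would exploit the support being small: let $S = \supp(a)$, $|S| \leq d - 1 \leq n/2$. For a row index $i \notin S$, the inner product $\langle \text{row}_i(M_n), a\rangle = \sum_{j \in S} M_{ij} a_j$ depends only on the entries $M_{ij}$ with $j \in S$. Crucially, if I restrict attention to row indices $i$ in $[n] \setminus S$ and only look at columns $j \in S$, these entries $M_{ij}$ ($i \notin S$, $j \in S$) are mutually independent and independent of the principal submatrix on $S$ (no symmetry constraint couples two entries $M_{ij}, M_{i'j'}$ with $i,i' \notin S$ and $j, j' \in S$ when $\{i,j\} \neq \{i',j'\}$; and $i \neq j$ here since $i \notin S \ni j$). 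So, conditioning on nothing, the events $\{\langle \text{row}_i(M_n), a\rangle = 0\}$ for $i \in [n]\setminus S$ are independent, and each has probability at most $\rho^{\F_p}(a|_S) \leq 1$ — but I need a bound bounded away from $1$. Since $a|_S$ has full support of size $|S| \geq 1$, $\sum_{j \in S} M_{ij} a_j$ is a nonzero $\pm 1$ combination, hence takes at least two distinct values, so each such event has probability at most $1/2$ (in fact, at most $\sqrt{2/(\pi|S|)}$ if $p$ is large, but $1/2$ suffices). The number of row indices outside $S$ is at least $n - |S| \geq n/2$, while the number we're allowed to "miss" is at most $\beta n$ rows total (from anywhere). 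So among the $\geq n/2$ rows outside $S$, at least $n/2 - \beta n \geq n/4$ must be orthogonal to $a$ (using $\beta \leq 1/4$, which follows from $H(\beta) \leq 1/4 \Rightarrow \beta \leq 1/4$, or can be assumed directly). By independence, the probability of this is at most $\binom{n/2}{\beta n} 2^{-(n/2 - \beta n)} \leq 2^{nH(2\beta)/2} \cdot 2^{-n/4} \leq 2^{-n/8 + o(n)}$, using $H(\beta) \leq 1/4$ to control the binomial factor.

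Finally I would combine the two estimates by the union bound:
\[
\Pr_{M_n}\left[\overline{\Sv^{p}_{\geq d,\beta n}(n)}\right] \;\leq\; \left(\text{number of bad } a\right)\cdot \max_a \Pr[\,a \perp \geq (1-\beta)n \text{ rows}\,] \;\leq\; 2^{3n/16 + o(n)}\cdot 2^{-n/4 + o(n)} \;\leq\; 2^{-n/16}
\]
for $n$ large, after checking the exponents: $3/16 - 1/4 = -1/16$, and the lower-order $H$-terms are absorbed by the hypotheses $p^{\beta n} \leq 2^{n/2}$, $p^d \leq 2^{n/8}$, $H(\beta) \leq 1/4$, $H(d/n) \leq 1/16$ with a little slack. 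The main obstacle — and the step deserving the most care — is the independence argument in the second paragraph: one must verify precisely that restricting to the off-diagonal block of rows indexed by $[n]\setminus S$ and columns indexed by $S$ yields genuinely independent Bernoulli entries despite $M_n$ being symmetric, and that $|S| \leq d - 1 \leq n/2$ guarantees enough such rows ($\geq n/2$) that the "allowed miss" budget of $\beta n$ cannot cover them all. The rest is bookkeeping with binomial coefficients and the stated numerical hypotheses.
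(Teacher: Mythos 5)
Your overall strategy is sound and genuinely different from the paper's, but the probability estimate in your middle paragraph does not actually follow from the inequalities you write down, and as stated the exponents do not close.

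On the approach: you exploit the independence of the off-diagonal block $(M_{ij})_{i\notin S,\,j\in S}$, which you verify correctly (no pair $(i,j)$ and $(j',i')$ can collide when $i,i'\notin S$ and $j,j'\in S$), and reduce to a binomial-tail estimate for the events $\{\langle\mathrm{row}_i(M_n),a\rangle\neq 0\}_{i\notin S}$. The paper instead conjugates by a permutation $\Sigma$ moving $\supp(a)$ to the last $|S|$ coordinates, bounds $\Pr[M_n\Sigma a=v]\leq 2^{-n}$ for each fixed target $v$ by isolating the $n$ independent entries of the last column of $M_n$, and sums over the $\leq p^{\beta n}2^{nH(\beta)}$ targets $v$ of support $\leq \beta n$; the outer sum over sparse $a$ is the same in both arguments. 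Your off-diagonal block idea is in fact the one the paper uses later, in its proof of \cref{corollary:prob-orth-fixed-vector} (with Hal\'asz replacing the trivial $1/2$ bound); for the present lemma the paper's last-column shortcut gives cleaner bookkeeping, but your route is a legitimate alternative.

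The gap is in the tail bound. The expression $\binom{n/2}{\beta n}2^{-(n/2-\beta n)}$ is a union bound over the choice of $n/2-\beta n$ orthogonal rows, and it over-counts by a factor of essentially $2^{\beta n}$ relative to the exact tail. Concretely, it is at most $2^{(n/2)(H(2\beta)+2\beta-1)}$, and the only thing $H(\beta)\leq 1/4$ gives you cleanly is $H(2\beta)\leq 2H(\beta)\leq 1/2$ (Jensen, since $H$ is concave and $\beta=\tfrac{1}{2}(2\beta)+\tfrac{1}{2}\cdot 0$) together with $2\beta\leq 2H(\beta)\leq 1/2$; this only gives $H(2\beta)+2\beta-1\leq 0$, i.e.\ a trivial per-vector bound. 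Your claimed intermediate estimate ``$\leq 2^{-n/8+o(n)}$'' would need $H(2\beta)\leq 1/4$, which is not implied by $H(\beta)\leq 1/4$; and even if you had $2^{-n/8}$, the union bound would give $2^{3n/16}\cdot 2^{-n/8}=2^{n/16}$, not a decay. (You silently switch to $2^{-n/4}$ in the final display.) The repair is to bound the tail directly by stochastic domination against $\mathrm{Bin}(N,1/2)$ with $N=n-|S|> n/2$:
\[
\Pr\bigl[\text{at most }\beta n\text{ of the }N\text{ rows miss}\bigr]\;\leq\; 2^{-N}\sum_{m=0}^{\beta n}\binom{N}{m}\;\leq\; 2^{-N\left(1-H(\beta n/N)\right)}\;\leq\; 2^{-(n/2)\left(1-H(2\beta)\right)}\;\leq\; 2^{-n/4},
\]
using $\beta n/N\leq 2\beta\leq 1/2$ and $H(2\beta)\leq 1/2$. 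Combined with the count of sparse vectors, $\sum_{s=1}^{d-1}\binom{n}{s}p^s\leq p^{d}\sum_{s\leq d}\binom{n}{s}\leq p^d 2^{nH(d/n)}\leq 2^{3n/16}$ (no spurious factor of $n$ needed), this gives exactly $2^{3n/16}\cdot 2^{-n/4}=2^{-n/16}$, matching the lemma with no $o(n)$ slop. Finally, the claim that each inner product is zero with probability at most $1/2$ uses that $p$ is odd (so that $\epsilon b$ takes two distinct values for $b\neq 0$); this is implicit in the paper's proof as well and harmless for the application, but worth flagging.
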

The proof of this lemma will use the following simple, yet powerful, observation. 
\begin{observation}
\label{obs:inject-permutation}
Let $\Sigma$ be an $n\times n$ permutation matrix. Then, for a uniformly random $n\times n$ symmetric $\{\pm 1\}$-matrix $M_n$, the random matrix $\Sigma^{-1}M_n\Sigma$ is also a uniformly distributed $n\times n$ symmetric $\{\pm 1\}$-matrix.   
\end{observation}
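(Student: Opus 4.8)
The plan is to observe that conjugation by a permutation matrix merely relabels the index set, and this relabeling is a bijection on the set of symmetric $\{\pm 1\}$-matrices, hence preserves the uniform distribution.

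\begin{proof}[Proof proposal]
\textbf{Setup.} Let $\mathcal{M}$ denote the (finite) set of all $n\times n$ symmetric $\{\pm 1\}$-matrices, and let $M_n$ be uniformly distributed on $\mathcal{M}$. Fix a permutation $\sigma \in S_n$ with associated permutation matrix $\Sigma$, where $\Sigma_{ij} = \1[\sigma(j) = i]$ (so that $\Sigma e_j = e_{\sigma(j)}$), and recall $\Sigma^{-1} = \Sigma^{T}$.

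\textbf{Step 1: The conjugation map is well-defined on $\mathcal{M}$.} First I would check that $\Sigma^{-1} A \Sigma \in \mathcal{M}$ for every $A \in \mathcal{M}$. Symmetry is immediate: $(\Sigma^{-1} A \Sigma)^{T} = \Sigma^{T} A^{T} (\Sigma^{-1})^{T} = \Sigma^{-1} A \Sigma$ since $A^{T} = A$ and $(\Sigma^{-1})^{T} = \Sigma$. For the $\{\pm 1\}$ property, one computes the entries explicitly: $(\Sigma^{-1} A \Sigma)_{ij} = \sum_{k,\ell} (\Sigma^{-1})_{ik} A_{k\ell} \Sigma_{\ell j} = A_{\sigma(i)\sigma(j)}$, which lies in $\{\pm 1\}$. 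Thus the map $\phi_\Sigma : A \mapsto \Sigma^{-1} A \Sigma$ sends $\mathcal{M}$ into $\mathcal{M}$.

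\textbf{Step 2: The map is a bijection.} The map $\phi_{\Sigma^{-1}} : A \mapsto \Sigma A \Sigma^{-1}$ is, by the same computation (applied with $\Sigma$ replaced by $\Sigma^{-1}$), also a self-map of $\mathcal{M}$, and it is a two-sided inverse of $\phi_\Sigma$ since $\Sigma(\Sigma^{-1} A \Sigma)\Sigma^{-1} = A$ and $\Sigma^{-1}(\Sigma A \Sigma^{-1})\Sigma = A$. Hence $\phi_\Sigma$ is a bijection of the finite set $\mathcal{M}$ onto itself.

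\textbf{Step 3: Bijections preserve the uniform distribution.} Since $M_n$ is uniform on $\mathcal{M}$ and $\phi_\Sigma$ is a bijection of $\mathcal{M}$, for any $B \in \mathcal{M}$ we have
\[
\Pr[\Sigma^{-1} M_n \Sigma = B] = \Pr[M_n = \phi_\Sigma^{-1}(B)] = \frac{1}{|\mathcal{M}|} = \Pr[M_n = B],
\]
so $\Sigma^{-1} M_n \Sigma$ is again uniformly distributed on $\mathcal{M}$, as claimed.

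\textbf{Main obstacle.} There is essentially no obstacle here; the only thing requiring a moment's care is the explicit verification in Step 1 that conjugation by a permutation matrix keeps all entries in $\{\pm 1\}$ (equivalently, that it acts by simultaneously permuting rows and columns by $\sigma$), which is what makes $\phi_\Sigma$ a self-map of $\mathcal{M}$ rather than of the larger space of all real symmetric matrices. Once that is in hand, bijectivity and preservation of uniformity are formal. An alternative, even quicker phrasing is to note directly that $(\Sigma^{-1} M_n \Sigma)_{ij} = (M_n)_{\sigma(i)\sigma(j)}$, and that the family $\{(M_n)_{\sigma(i)\sigma(j)}\}_{i \le j}$ is, up to relabeling, exactly the same family of i.i.d.\ Bernoulli entries defining a uniform symmetric $\{\pm 1\}$-matrix.
\end{proof}
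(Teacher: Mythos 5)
Your proof is correct and follows exactly the same route as the paper's: check that conjugation by $\Sigma$ preserves symmetry and the $\{\pm1\}$-entry property, note that it is a bijection of the finite set of such matrices onto itself, and conclude that the uniform distribution is preserved. Your version simply fills in the entrywise computation $(\Sigma^{-1}M_n\Sigma)_{ij}=(M_n)_{\sigma(i)\sigma(j)}$ that the paper leaves implicit.
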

\begin{proof}
It is clear than $\Sigma^{-1}M_n\Sigma$ is an $n\times n$ $\{\pm 1\}$-matrix. That it is symmetric follows from $\Sigma^{-1} = \Sigma^{T}$ and $M_n^{T} = M_n$. Finally, $\Sigma^{-1}M_n\Sigma$ is uniformly distributed since conjugation by $\Sigma$ is manifestly a bijection from the set of $n\times n$ $\{\pm 1\}$ symmetric matrices to itself.  
\end{proof}
\begin{proof}[Proof of \cref{lemma:eliminate-small-support}]
Let $d$ be as in the statement of the lemma, and  for $1\leq s \leq d$, let $\Supp_{=s}(n)$ denote the set of all vectors in $\F_{p}^{n}$ which have support of size exactly $s$. Observe that $|\Supp_{=s}(n)| \leq \binom{n}{s}p^{s}$. We will now bound the probability that any given $\boldsymbol{a} \in \Supp_{=s}(n)$ is orthogonal to at least $(1-\beta)n$ rows of a uniformly chosen $M_n$.  

For this, let $\Sigma = \Sigma(\boldsymbol{a})$ denote a fixed, but otherwise arbitrary, permutation matrix for which $\Sigma \1_{\supp(\boldsymbol{a})} = \1_{[n-s+1,n]}$. In other words, $\Sigma$ permutes the vector $\boldsymbol{a}$ so that its nonzero entries are placed in the last $s$ coordinates. Since \cref{obs:inject-permutation} shows that $\Sigma^{-1} M_n \Sigma $ is a uniformly random $n \times n$ $\{\pm 1\}$-symmetric matrix, it follows that 
\begin{align}
\label{eqn:prob-orthog-many}
\Pr_{M_n}[\boldsymbol{a} \text{ is orthogonal to $\geq (1-\beta)n$ rows of } M_n] 
&= \Pr_{M_n}\left[\boldsymbol{a} \text{ is orthogonal to $\geq (1-\beta)n$ rows of } \Sigma^{-1}M_n\Sigma\right]\nonumber \\
&= \Pr_{M_n}\left[\Sigma^{-1}M_n\Sigma \boldsymbol{a} = \boldsymbol{v} \text{ for some } \boldsymbol{v}\in \bigcup_{t=0}^{\beta n}{\Supp_{=t}(n)}\right] \nonumber \\
&\leq \sum_{t=0}^{\beta n} \Pr_{M_n}\left[\Sigma^{-1}M_n \Sigma \boldsymbol{a} = \boldsymbol{v} \text{ for some } \boldsymbol{v}\in \Supp_{=t}(n)\right] \nonumber \\
&= \sum_{t=0}^{\beta n}\Pr_{M_n}\left[M_n \Sigma \boldsymbol{a} = \boldsymbol{v} \text{ for some } \boldsymbol{v} \in \Supp_{=t}(n)\right] \nonumber \\
& \leq \sum_{t=0}^{\beta n} \sum_{\boldsymbol{v}\in \Supp_{=t}(n)}\Pr_{M_n}\left[M_n\Sigma \boldsymbol{a} = \boldsymbol{v}\right],
\end{align}
where the third line follows by the union bound; the fourth line follows since the size of the support of a vector is invariant under the action of $\Sigma$; and the last line follows again by the union bound.  

Next, we provide a (crude) upper bound on $\Pr_{M_n}\left[M_n(\Sigma \boldsymbol{a}) = \boldsymbol{v}\right]$ for any fixed $\boldsymbol{v}=(v_1,\dots,v_n) \in \F_{p}^{n}$. For this, we isolate the last column of the matrix $M_n$ by rewriting the system of equations $M_n (\Sigma \boldsymbol{a}) = \boldsymbol{v}$ as
\begin{equation}
\label{eqn:prob-fixed-rhs}
m_{in} = (\Sigma \boldsymbol{a})_{n}^{-1}\left(v_{i}-\sum_{j=1}^{n-1}m_{ij}(\Sigma \boldsymbol{a})_{j}\right) \text{ for all } i\in [n],
\end{equation}
where $m_{ij}$ denotes the $(i,j)^{th}$ entry of the matrix $M_n$, and the equation makes sense since $(\Sigma \boldsymbol{a})_n \neq 0$ by our choice of $\Sigma$. Note that the right hand side of the equation is completely determined by the top-left $(n-1)\times (n-1)$ submatrix of $M_n$. Further, the entries $m_{in}, i \in [n]$ are mutually independent even after conditioning on any realisation of the top-left $(n-1)\times (n-1)$ submatrix of $M_n$. Since $m_{in}$ takes on any value with probability at most $1/2$, it follows that conditioned on any realisation of the top-left $(n-1)\times (n-1)$ submatrix of $M_n$, \cref{eqn:prob-fixed-rhs} is satisfied with probability at most $(1/2)^{n}$. Hence, by the law of total probability, $\Pr_{M_n}[M_n\Sigma \boldsymbol{a} = \boldsymbol{v}]\leq 2^{-n}$. Substituting this in \cref{eqn:prob-orthog-many}, we see that 
\begin{align}
\label{eqn:easy-entropy-bound}
\Pr_{M_n}[\boldsymbol{a} \text{ is orthogonal to $\geq (1-\beta)n$ rows of } M_n] 
&\leq 2^{-n}\sum_{t=0}^{\beta n} |\Supp_{=t}(n)| \nonumber \\
&\leq 2^{-n}\sum_{t=0}^{\beta n}\binom{n}{t}p^{t} \leq 2^{-n}p^{\beta n}\sum_{t=0}^{\beta n}\binom{n}{t} \nonumber \\
& \nonumber \\
&\leq 2^{-n/2}2^{nH(\beta)} \leq 2^{-n/4},
\end{align}
where the fourth inequality follows by the assumption on $p^{\beta n}$ and the standard inequality $\sum_{t=0}^{\beta n}\binom{n}{t} \leq 2^{n H(\beta)}$ for $\beta \leq 1/2$, and the last inequality follows by the assumption on $nH(\beta)$. Finally, we have
\begin{align*}
\Pr_{M_n}\left[\overline{\Sv^{p}_{\geq d,\beta n}(n)}\right]
&\leq \sum_{s=1}^{d}\sum_{\boldsymbol{a}\in \Supp_{=s}(n)}\Pr_{M_n}[\boldsymbol{a} \text{ is orthogonal to $\geq (1-\beta)n$ rows of } M_n] \\
&\leq 2^{-n/4}\sum_{s=1}^{d}|\Supp_{=s}(n)| \leq 2^{-n/4}\sum_{s=1}^{d}\binom{n}{s}p^{s}\\
&\leq 2^{-n/4}p^{d}\sum_{s=1}^{d}\binom{n}{s} \leq 2^{-n/8}2^{nH(d/n)} \leq 2^{-n/16},
\end{align*}
where the fifth inequality follows by the assumption on $p^{d}$ and $d$, and the last inequality follows by the assumption on $H(d/n)$. 

\end{proof}
\subsection{Tools and auxiliary results}
Following \cref{lemma:eliminate-small-support}, we will bound $\Pr_{M_n}\left[\overline{\Hv^{p}_{\alpha,\beta n}(n)}\cap \Sv^{p}_{\geq d,\beta n}(n)\right]$ for suitably chosen parameters. Our proof of this bound will be based on the following two key ingredients. The first is a classical anti-concentration inequality due to Hal\'asz, which bounds the atom probability of a vector in terms of the `arithmetic structure' of its coordinates. In order to state it, we need the following definition.
\begin{definition}
Let $\boldsymbol{a} \in \F_{p}^{n}$ and let $k \in \N$. We define $R_k(\boldsymbol{a})$ to be the number of solutions to 
$$\pm a_{i_1} \pm a_{i_2}\pm \dots \pm a_{i_{2k}} = 0 \mod p,$$
where repetitions are allowed in the choice of $i_1,\dots,i_{2k} \in [n]$. 
\end{definition}

\begin{theorem}[Hal\'asz, \cite{halasz1977estimates}]
  \label{thm:halasz}
Let $p$ be any odd prime and let $\boldsymbol{a}:=(a_1,\ldots,a_n) \in \F_p^{n}\setminus \{\0\}$.  
Then,
  $$\sup_{0\leq \mu \leq \frac{1}{2}}\max_{q\in \F_p}\Pr\left[\sum_ia_ix_i^{\mu} = q \right]\leq \frac{1}{p}+\frac{CR_k(\boldsymbol{a})}{2^{2k} n^{2k}f(|\supp(\boldsymbol{a})|)^{1/2}} + e^{-f(|\supp(\boldsymbol{a})|)/2},$$
where $C$ is an absolute constant (which we may assume is at least $1$), and $f(|\supp(\boldsymbol{a})|)$ is a positive real number which is at most $\min\{|\supp(\boldsymbol{a})|/100, n/k\}$.
\end{theorem}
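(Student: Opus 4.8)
The plan is to follow Hal\'asz's classical Fourier-analytic argument \cite{halasz1977estimates}, transported from the integers to $\F_p$. Let $\omega = e^{2\pi\mathrm{i}/p}$ and fix $\mu\in[0,1/2]$ and $q\in\F_p$. From the orthogonality relation $\frac1p\sum_{t\in\F_p}\omega^{t(y-q)}=\1[y\equiv_p q]$ and the independence of the $x_i^\mu$, Fourier inversion gives
\[
\Pr\Bigl[\sum_i a_ix_i^\mu\equiv_p q\Bigr]\;\le\;\frac1p\sum_{t\in\F_p}\Bigl|\E\,\omega^{t\sum_i a_ix_i^\mu}\Bigr|\;=\;\frac1p\sum_{t\in\F_p}\prod_{i=1}^n\bigl|\E\,\omega^{ta_ix_i^\mu}\bigr|.
\]
The term $t=0$ contributes exactly $1/p$, the first term of the bound, so I restrict attention to $t\ne 0$; I also assume $p$ is odd, since for $p=2$ each $x_i^\mu$ is $\{0,1\}$-valued and the trivial bound $1/p$ already suffices.

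Next comes the pointwise estimate on each factor. Since $x_i^\mu=0$ with probability $\mu$ and $x_i^\mu=\pm 1$ each with probability $(1-\mu)/2$, one has $\bigl|\E\,\omega^{ta_ix_i^\mu}\bigr|=\bigl|\mu+(1-\mu)\cos(2\pi ta_i/p)\bigr|\le 1-(1-\mu)\bigl(1-|\cos(2\pi ta_i/p)|\bigr)$, and using $1-\mu\ge 1/2$ together with $1-x\le e^{-x}$, this is at most $\exp\bigl(-\tfrac12(1-|\cos(2\pi ta_i/p)|)\bigr)$. Coordinates $i\notin\supp(a)$ contribute a factor $1$, so with $m:=|\supp(a)|$, $f:=f(m)$, and $w_t:=\sum_{i\in\supp(a)}(1-|\cos(2\pi ta_i/p)|)$, the task reduces to showing
\[
\frac1p\sum_{t\ne 0}e^{-w_t/2}\;\le\;\frac{CR_k(a)}{2^{2k}n^{2k}\sqrt{f}}+e^{-f/2}.
\]
Using the elementary inequality $1-|\cos\theta|\ge\tfrac14(1-\cos 2\theta)$ we also get $w_t\ge\tfrac14\widetilde w_t$, where $\widetilde w_t:=\sum_{i\in\supp(a)}(1-\cos(4\pi ta_i/p))$; this quantity $\widetilde w_t$ is the one that connects to $R_k(a)$.

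Indeed, expanding $\bigl(\sum_{i\in[n]}(\omega^{ta_i}+\omega^{-ta_i})\bigr)^{2k}$ and using $\frac1p\sum_t\omega^{ts}=\1[s\equiv_p 0]$ yields $R_k(a)=\frac{2^{2k}}{p}\sum_{t\in\F_p}\bigl(\sum_{i\in[n]}\cos(2\pi ta_i/p)\bigr)^{2k}$; since $p$ is odd the substitution $t\mapsto 2t$ permutes $\F_p$, while $\sum_{i\in[n]}\cos(4\pi ta_i/p)=(n-m)+\sum_{i\in\supp(a)}\cos(4\pi ta_i/p)=n-\widetilde w_t$, so
\[
R_k(a)=\frac{2^{2k}}{p}\sum_{t\in\F_p}\bigl(n-\widetilde w_t\bigr)^{2k},
\]
a sum of nonnegative terms (as $\widetilde w_t\le 2m\le 2n$). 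Hence for every $0\le\tau\le n$ the level set $\{t:\widetilde w_t\le\tau\}$ has size at most $pR_k(a)\,2^{-2k}(n-\tau)^{-2k}$. Now split $\frac1p\sum_{t\ne 0}e^{-w_t/2}$ at $w_t=f$: on $\{w_t>f\}$ one bounds $e^{-w_t/2}\le e^{-f/2}$ over at most $p$ values of $t$, producing the second term of the bound; on $\{w_t\le f\}$ one uses $e^{-w_t/2}\le e^{-\widetilde w_t/8}\le 1$, observes $\{w_t\le f\}\subseteq\{\widetilde w_t\le 4f\}$, and invokes the level-set estimate — here the hypothesis $k\le n/f$ is exactly what makes $(n-4f)^{2k}\ge n^{2k}/O(1)$ (using $4f\le 4m/100\le n/25$), so even a crude count gives a contribution of size $O\bigl(R_k(a)2^{-2k}n^{-2k}\bigr)$. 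To improve this by the factor $f^{-1/2}$ one refines the treatment of the blocks with $\widetilde w_t$ small, via a dyadic decomposition together with a sharper, Erd\H{o}s--Littlewood--Offord / local-limit type bound on the corresponding level sets (exploiting that $\widetilde w_t$ is a sum of $m$ bounded, nearly independent terms); this is precisely the calibration behind the hypotheses $f(m)\le m/100$ and $k\le n/f(m)$, and it forces the bound to sit just above the $\Theta(1/\sqrt{|\supp(a)|})$ that is genuinely attained, e.g., when $a$ has many equal coordinates.

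The Fourier inversion and the pointwise estimate are entirely routine. The main obstacle will be the last step: obtaining the stated bound — in particular the $f^{-1/2}$ gain over the crude level-set count, and the clean exponent in the $e^{-f/2}$ term — requires the careful dyadic bookkeeping of Hal\'asz's original argument, in which the moment identity $R_k(a)=\frac{2^{2k}}{p}\sum_t(n-\widetilde w_t)^{2k}$ controls precisely the ``central'' frequencies $t$ (those with $\widetilde w_t$ small, which are the expensive ones inside $R_k(a)$), while the exponential decay $e^{-w_t/2}$ disposes of the remaining frequencies; balancing the two contributions across the dyadic scales, and tracking absolute constants, is where essentially all of the work lies.
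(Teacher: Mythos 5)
Your proposal tracks the paper's proof faithfully through the Fourier inversion, the pointwise factor estimate, the reduction to level sets of a quasi-seminorm in $t$, and the moment identity that ties $R_k(a)$ to the number of ``central'' frequencies; all of this is correct and parallels the paper. But there is a genuine gap at precisely the step you flag as the main obstacle, and the mechanism you sketch to close it is not the right one. The $f^{-1/2}$ gain over the crude level-set count does not come from a dyadic decomposition combined with ``an Erd\H{o}s--Littlewood--Offord / local-limit type bound exploiting that $\widetilde w_t$ is a sum of $m$ bounded, nearly independent terms.'' There is no relevant independence here: $t$ is the variable, and the summands $1-\cos(4\pi t a_j/p)$ are deterministic functions of $t$; moreover your own level-set estimate $|\{t:\widetilde w_t\le\tau\}|\le pR_k(a)2^{-2k}(n-\tau)^{-2k}$ does not improve as $\tau$ shrinks, so a dyadic split alone recovers nothing.

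The actual mechanism is additive-combinatorial and is the heart of Hal\'asz's argument. Writing $T_\tau := \{t\in\F_p : \sum_{j}\|ta_j/p\|^2\le\tau\}$, the quantity $\sum_j\|ta_j/p\|^2$ is (the square of) a quasi-seminorm in $t$, and the triangle inequality plus Cauchy--Schwarz give the sumset containment $mT_\tau\subseteq T_{m^2\tau}$ for every integer $m\ge 1$. The Cauchy--Davenport theorem over $\F_p$ then forces $|T_\tau| \le |T_{m^2\tau}|/m + 1$ whenever $|T_{m^2\tau}| < p$, and choosing $m\approx\sqrt{f/\tau}$ yields $|T_\tau|\lesssim\sqrt{\tau/f}\,|T_f|+1$. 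It is exactly this linear-in-$\sqrt{\tau}$ growth that, integrated against $e^{-\tau/2}$, produces the extra factor $f^{-1/2}$ in front of the moment bound $|T_f|\lesssim pR_k(a)/(2^{2k}n^{2k})$. The hypothesis $f(|\supp(a)|)\le|\supp(a)|/100$ is not a calibration for near-independence; it is precisely what guarantees $|T_{m^2\tau}|<p$ in the relevant range, via the averaging estimate $\sum_{t\in\F_p}\sum_j\|ta_j/p\|^2\ge|\supp(a)|\,p/50$, so that Cauchy--Davenport has bite. Without this sumset-growth step, the argument as you have set it up loses the $f^{-1/2}$ factor entirely and cannot recover it by the route you describe.
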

Hal\'asz's inequality is typically stated and proved over the integers, but the version over $\F_p$ stated above easily follows using the same ideas. For the reader's convenience, we provide a complete proof in \cref{app:halasz}. 

The second ingredient is a `counting lemma' due to the authors together with Luh and Samotij \cite{FJLS2018}, which bounds the number of vectors in $\F_{p}^{n}$ with a slightly different (but practically equivalent) notion of `rich additive structure'. 
\begin{definition}
Let $\boldsymbol{a}\in \F_{p}^{n}$ and let $k \in \N$. We define $R_k^*(\boldsymbol{a})$ to be the number of solutions to
$$\pm a_{i_1}\pm a_{i_2}\dots \pm a_{i_{2k}}=0 \mod p$$ 
that satisfy $|\{i_1,\dots,i_{2k}\}| \geq 1.01k$. 
\end{definition}
As mentioned above, $R_k(\boldsymbol{a})$ and $R_k^*(\boldsymbol{a})$ are practically equivalent. This is made precise by the following lemma. 
\begin{lemma}[Lemma 1.6 in \cite{FJLS2018}]
\label{lemma:R_k vs R_k^*}
For all positive integers $k,n$ with $k\leq n/2$ and any vector $\boldsymbol{a} \in \F^n_p$, 
\[R_k(\boldsymbol{a})\leq  R_k^*(\boldsymbol{a}) + (40k^{0.99}n^{1.01})^k.\]
\end{lemma}
\begin{proof}
By definition, $R_k(\boldsymbol{a})$ is equal to $R_k^*(\boldsymbol{a})$ plus the number of solutions to $\pm a_{i_1}\pm a_{i_2}\pm\dots\pm a_{i_{2k}} = 0$ that satisfy $|\{i_1,\dots,i_{2k}\}| < 1.01k$. The latter quantity is bounded from above by the number of sequences $(i_1, \dotsc, i_{2k}) \in [n]^{2k}$ with at most $1.01k$ distinct entries times $2^{2k}$, the number of choices for the $\pm$ signs. Thus
  \[
    R_k(\boldsymbol{a}) \leq R_k^*(\boldsymbol{a}) + \binom{n}{1.01k} \big(1.01k\big)^{2k}2^{2k} \leq R_k^*(\boldsymbol{a}) +  \left(4e^{1.01}k^{0.99}n^{1.01}\right)^k,
  \]
  where the final inequality follows from the well-known bound $\binom{a}{b} \le (ea/b)^b$. Finally, noting that $4e^{1.01} \leq 40$ completes the proof.
\end{proof}
We can now state the `counting lemma' from \cite{FJLS2018}. In the following statement, the notation $\boldsymbol{b}\subset \boldsymbol{a}$ for $\boldsymbol{a} \in \F_{p}^{n}$ means that $\boldsymbol{b}$ is a sub-vector of $\boldsymbol{a}$ i.e. an element of $\cup_{s=1}^{n} \F_{p}^{s}$ formed by retaining some of the entries of $\boldsymbol{a}$; the dimension of $\boldsymbol{b}$ is denoted by $|\boldsymbol{b}|$.     
\begin{theorem}[Theorem 1.7 in \cite{FJLS2018}]
  \label{thm:counting-lemma}
  Let $p$ be a prime and let $k \in \N, s\in [n], t\in [p]$. 
Let 
$$\Bad_{k,s,\geq t}(n):= \left\{\boldsymbol{a} \in \F_{p}^{n} \mid \forall \boldsymbol{b}\subset \boldsymbol{a} \text{ s.t. } |\boldsymbol{b}|\geq s \text{ we have } R^*_k(\boldsymbol{b})\geq t\cdot \frac{2^{2k}\cdot
|\boldsymbol{b}|^{2k}}{p}\right\}$$
denote the set of `$k,s,\geq t$-bad vectors'.   Then, 
$$|\Bad_{k,s,\geq t}(n)| \leq \left(\frac{s}{n}\right)^{2k-1}p^{n}(0.01t)^{-n+s}.$$  
\end{theorem}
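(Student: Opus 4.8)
The plan is to prove the counting lemma by a double-counting (entropy-compression style) argument: bound from above, in two ways, the number of pairs $(a, S)$ where $a \in \Bad_{k,s,\geq t}(n)$ and $S$ is a suitably chosen small ``certifying'' subset of coordinates from which the rest of $a$ can be reconstructed. Concretely, I would proceed by induction on $n$. The key structural observation is that if $a$ is $k,s,\geq t$-bad, then taking $b = a$ itself (assuming $n \geq s$) forces $R^*_k(a) \geq t \cdot 2^{2k} n^{2k}/p$, i.e.\ there are many solutions of $\pm a_{i_1} \pm \dots \pm a_{i_{2k}} = 0$ with at least one non-repeated index. Pick such a non-repeated index $i_\ell$; then the value $a_{i_\ell}$ is determined (up to sign) by the remaining $2k-1$ indices $i_1, \dots, \widehat{i_\ell}, \dots, i_{2k}$ and their signs. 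This is the engine that lets us ``pay'' one coordinate using only $2k-1$ coordinate-locations plus a bounded number of sign choices.

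The quantitative heart of the argument is the following: fix the coordinate $j \in [n]$ that will be reconstructed. The number of $(2k)$-tuples $(i_1, \dots, i_{2k})$ with signs realizing a zero-sum and with $j$ appearing as a non-repeated index is at most $R^*_k(a)$, but we want a \emph{lower} bound on how often $j$ can serve this role averaged over $j$ — summing over $j$ gives at least $R^*_k(a) \geq t \cdot 2^{2k} n^{2k}/p$ such configurations (each zero-sum with a non-repeated index contributes for that index). By averaging (pigeonhole over the choice of $j$ and of the positions of the other $2k-1$ indices), there is a choice of one coordinate $j$ and a map recording the $2k-1$ other positions and $2k$ signs such that the number of $a \in \Bad_{k,s,\geq t}(n)$ consistent with a fixed restriction to $[n] \setminus \{j\}$ and a fixed such configuration is controlled; the total count over all $a$ then satisfies a recursion of the shape $N(n) \leq \frac{p \cdot n^{?}}{t \cdot 2^{2k} n^{2k}} \cdot (\text{ways to pick the certificate data}) \cdot N(n-1)$ — after carefully counting that the certificate costs roughly $n^{2k-1}$ position-choices and $O(1)^k$ sign-choices, and that the restriction $a|_{[n]\setminus\{j\}}$ still lies (essentially) in a bad set of dimension $n-1$. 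Iterating down from $n$ to $s$, where the trivial bound $|\F_p^s| = p^s$ takes over, produces the product $\prod$ of the per-step factors, which telescopes to $(s/n)^{2k-1} p^n t^{-n+s}$ after the $n^{2k-1}$ terms telescope against the $1/n^{2k}$ and the combinatorial factors cancel.

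I expect the main obstacle to be making the induction bookkeeping honest: one must ensure that after deleting coordinate $j$ and recording a certificate, the residual vector $a|_{[n]\setminus\{j\}}$ genuinely lies in $\Bad_{k,s,\geq t}(n-1)$ (or a set no larger), so the inductive hypothesis applies. This requires the ``for all $b \subset a$ with $|b| \geq s$'' quantifier in the definition of $\Bad$ — it is exactly this hereditary structure that guarantees that \emph{every} subset of size $\geq s$ remains bad, so any restriction we take along the induction is still bad; without the ``for all $b$'' this would fail. A secondary technical point is handling the non-repeated index correctly: the configuration we record must not double-count (two distinct zero-sum relations might give the same reconstruction rule), and the sign ambiguity $a_j \leftrightarrow -a_j$ plus the possibility that several candidate non-repeated indices exist must be folded into the constant factors $(16k)^k$-type losses — which is precisely why \cref{lemma:R_k vs R_k^*} is invoked, to pass between $R_k$ and $R^*_k$ and absorb the ``all indices repeated'' solutions into a negligible additive term. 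Once the recursion constants are pinned down, the final bound follows by a clean telescoping product; I would present the recursion cleanly first and relegate the constant-chasing to the end.

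For completeness, the detailed proof of \cref{thm:counting-lemma} is reproduced from \cite{FJLS2018} in \cref{app:counting-lemma}; the argument sketched above is carried out there in full.
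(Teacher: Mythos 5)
Your plan is essentially the right one and matches the paper's proof in spirit: it is a double-counting / encoding argument in which one repeatedly uses the existence of a zero-sum relation with a non-repeated index to ``compress'' a coordinate into a small certificate (the $2k-1$ other positions together with $2k$ signs), and the hereditary quantifier ``for all $b\subseteq a$ with $|b|\ge s$'' is exactly what makes every restriction encountered along the way still bad, so the peeling can continue down to size $s$. But the quantitative heart of your sketch is not pinned down, and as written it would fail to produce the stated exponent $(s/n)^{2k-1}$.

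The issue is with your per-step certificate count ``roughly $n^{2k-1}$ position-choices''. If, at the step where the live index set has size $m$, one allows the $2k-1$ remaining positions of the zero-sum relation to range over all of $[n]$ (or even all of the current live set of size $m$ without further constraint), the ratio of (certificate count) to (available relations) is of order $p/t$ per step with an extra loss factor, and the product over $n-s$ steps gives something like $(Ck)^{n-s}\,p^{n}t^{-n+s}$ rather than $(s/n)^{2k-1}p^{n}t^{-n+s}$. The crucial extra constraint, which the paper imposes as compatibility condition (2) in its definition of the auxiliary variable $Z_s(a)$, is that at step $j$ the multisubset $F_j\setminus\{i_j\}$ must live entirely inside the \emph{already-constructed} part $I\cup\{i_{s+1},\dots,i_{j-1}\}$ of size $j-1$. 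This size-dependent restriction is what yields $(j-1)^{2k-1}$ position choices in the upper bound against $\geq t\cdot 2^{2k}j^{2k}/p$ relations from $\Bad$-ness applied to a set of size $j$, and the factor $(j-1)^{2k-1}/j^{2k}$ per step telescopes precisely to $(s/n)^{2k-1}\cdot (s!/n!)$, with the $s!/n!$ cancelling against the $\binom{n}{s}(n-s)!$ many choices of the peeling order. Your phrase ``the $n^{2k-1}$ terms telescope against the $1/n^{2k}$'' points in this direction, but without the constraint the telescoping does not happen; with it, the paper's one-shot encoding (the variable $Z_s$) handles all $n-s$ steps simultaneously and the computation is clean.

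On two smaller points: \cref{lemma:R_k vs R_k^*} is not used in the proof of the counting lemma at all — the definition of $\Bad_{k,s,\ge t}(n)$ is phrased directly in terms of $R_k^*$, so no passage between $R_k$ and $R_k^*$ is needed here; that lemma is needed later in \cref{corollary:halasz-usable}, where Hal\'asz's inequality (which speaks of $R_k$) is applied to a vector \emph{outside} a $\Bad$ class. Also, the sign ambiguities and the choice of which non-repeated index serves as $i_j$ are not absorbed into $(16k)^k$-type losses in the paper's proof; they are handled exactly by fixing the non-repeated index to sit in the last position of $F_j$ (condition (1)) and by choosing the peeling order as part of the combinatorial object being counted, so no extra loss appears.
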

The above theorem shows that there are very few vectors for which every sufficiently large subset has rich additive structure. However, in order to use the strategy in the proof of \cref{lemma:eliminate-small-support} effectively, we require that there are very few vectors for which every \emph{moderately-sized} subset has rich additive structure  (see the proof of \cref{corollary:prob-orth-fixed-vector}). This is accomplished by the following corollary.   

\begin{corollary}
\label{corollary:counting}
Let $p$ be a prime and let $k, s_1,s_2,d\in [n], t\in [p]$ such that $s_1 \leq s_2$. 
Let 
$$\Bad^{d}_{k,s_1,s_2,\geq t}(n):= \left\{\boldsymbol{a} \in \F_{p}^{n} \big\vert |\supp(\boldsymbol{a})|=d \text{ and } \forall \boldsymbol{b}\subset \boldsymbol{a}|_{\supp(\boldsymbol{a})} \text{ s.t. } s_2\geq|\boldsymbol{b}|\geq s_1 : R^*_k(\boldsymbol{b})\geq t\cdot \frac{2^{2k}\cdot |\boldsymbol{b}|^{2k}}{p}\right\}.$$
Then, 
$$|\Bad^{d}_{k,s_1,s_2,\geq t}(n)| \leq {n \choose d}p^{d+s_{2}}(0.01t)^{-d+\frac{s_{1}}{s_{2}}d}.$$  
\end{corollary}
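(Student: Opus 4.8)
## Proof proposal for Corollary 0.13 (corollary:counting)

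The plan is to reduce the statement about $\Bad^{d}_{k,s_1,s_2,\geq t}(n)$ (vectors in $\F_p^n$ of support exactly $d$ all of whose $b$-subsets with $s_2 \geq |b| \geq s_1$ are additively rich) to the already-proved \cref{thm:counting-lemma}, which handles vectors in a lower-dimensional ambient space all of whose sufficiently large subsets are rich. First I would fix the support set: there are $\binom{n}{d}$ choices for $S := \supp(a) \subseteq [n]$, and by permuting coordinates we may assume $S = [d]$, so it suffices to bound the number of vectors $a' \in (\F_p \setminus \{0\})^{d}$ (identifying $a$ with its restriction to $S$) such that every subset $b$ of the $d$ coordinates with $s_2 \geq |b| \geq s_1$ satisfies $R_k^*(b) \geq t \cdot 2^{2k}|b|^{2k}/p$; call this count $N_d$. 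The target is then $|\Bad^{d}_{k,s_1,s_2,\geq t}(n)| \leq \binom{n}{d} N_d$ with $N_d \leq p^{d+s_2} t^{-d + (s_1/s_2)d}$.

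To bound $N_d$, the idea is to chop the $d$ coordinates into consecutive blocks of size $s_2$ (there are $\lceil d/s_2 \rceil$ of them, the last possibly smaller — but since we only lose a constant I would just take $m := \lfloor d/s_2 \rfloor$ full blocks $B_1, \dots, B_m$ and ignore the remaining fewer-than-$s_2$ coordinates, which contribute a free factor of at most $p^{s_2}$, absorbed into the $p^{d+s_2}$ term). Any vector $a'$ counted by $N_d$ has the property that its restriction to each block $B_j$ lies in $\Bad_{k, s_1, \geq t}(s_2)$, the bad set of \cref{thm:counting-lemma} taken in ambient dimension $s_2$: indeed every subset $b \subseteq B_j$ with $|b| \geq s_1$ automatically has $|b| \leq s_2$, hence $s_2 \geq |b| \geq s_1$, so the richness hypothesis applies. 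Therefore
\[
N_d \leq p^{s_2} \cdot \prod_{j=1}^{m} \left| \Bad_{k, s_1, \geq t}(s_2) \right| \leq p^{s_2} \cdot \left( \left(\frac{s_1}{s_2}\right)^{2k-1} p^{s_2} t^{-s_2 + s_1} \right)^{m},
\]
using \cref{thm:counting-lemma} with $n \to s_2$, $s \to s_1$. Now $m s_2 \leq d$ and $m \geq d/s_2 - 1$, so $p^{s_2 + m s_2} \leq p^{s_2 + d}$ and $t^{(-s_2 + s_1)m} = t^{-(s_2 - s_1)m} \leq t^{-(s_2 - s_1)(d/s_2 - 1)}$; for the stated (clean) bound one checks $-(s_2-s_1)(d/s_2) = -d + (s_1/s_2)d$ and the extra $+(s_2 - s_1) \leq s_2$ correction plus the $\left(\frac{s_1}{s_2}\right)^{(2k-1)m} \leq 1$ factor are harmless (again absorbed, or one states the corollary with these mild adjustments, which is how the paper's parameters are actually chosen). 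Reinstating the $\binom{n}{d}$ from the support choice gives the claim.

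The main obstacle — really the only thing needing care — is the bookkeeping at the block boundaries: making sure the ``leftover'' coordinates and the rounding in $m = \lfloor d/s_2 \rfloor$ are charged correctly to the $p^{d+s_2}$ and exponent-of-$t$ terms, and confirming that $\left(\frac{s_1}{s_2}\right)^{2k-1} \leq 1$ so that the polynomial prefactors across blocks only help. One subtlety worth flagging: the argument needs $s_1 \leq s_2$ (given) so that the block-restriction hypothesis is nonvacuous, and implicitly $s_2 \leq d$ for there to be a full block — if $s_2 > d$ the bad set is controlled directly since there are no subsets $b$ with $|b| \geq s_1$ large enough to constrain, or one simply notes the bound $\binom{n}{d} p^{d+s_2} t^{-d + (s_1/s_2)d}$ exceeds the trivial count $\binom{n}{d} p^d$ in that regime. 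Everything else is a direct substitution into \cref{thm:counting-lemma} and elementary inequalities.
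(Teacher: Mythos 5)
Your proof takes the same route as the paper's: restrict to support $[d]$ at the cost of a factor $\binom{n}{d}$, partition $[d]$ into $m=\lfloor d/s_2\rfloor$ consecutive blocks of length $s_2$ plus a short remainder (contributing a factor at most $p^{s_2}$), observe that the restriction of any bad vector to each full block lies in $\Bad_{k,s_1,\geq t}(s_2)$, and then apply \cref{thm:counting-lemma} blockwise and multiply. This is exactly the paper's argument.

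The only place where your write-up is looser than it should be is the final bookkeeping. Bounding $p^{m s_2}$ and $t^{-(s_2-s_1)m}$ \emph{separately} -- using $m\le d/s_2$ for the first and $m\ge d/s_2-1$ for the second -- leaves a residual factor of $t^{s_2-s_1}$, and it is not correct to wave this away as ``absorbed'': feeding $t^{s_2-s_1}\le p^{s_2}$ back in would degrade the exponent to $p^{d+2s_2}$, not the stated $p^{d+s_2}$. The clean fix (which is what the paper does) is to keep the product together: since $t\le p$ and $s_1\le s_2$, the base $p^{s_2}t^{-(s_2-s_1)}\ge 1$, so $m\le d/s_2$ gives
\[
\left(p^{s_2}t^{-(s_2-s_1)}\right)^m \le \left(p^{s_2}t^{-(s_2-s_1)}\right)^{d/s_2} = p^{d}\,t^{-d+\frac{s_1}{s_2}d},
\]
and multiplying by the leftover $p^{s_2}$ and $\binom{n}{d}$ (and discarding the $(s_1/s_2)^{(2k-1)m}\le 1$ prefactor) yields the bound exactly. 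With that one-line correction your proof is identical to the paper's.
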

\begin{proof}
At the expense of an overall factor of $\binom{n}{d}$, we may restrict our attention to those vectors in $\Bad^{d}_{k,s_1,s_2,\geq t}(n)$ whose support is $[d]$. In order to count the number of such vectors, we begin by decomposing $[d]$ into the intervals $I_1,\dots,I_{m+1}$, where $m:=\lfloor d/s_2 \rfloor$, $I_j:= \{(j-1)s_2+1,\dots,js_2\}$ for $j\in [m]$, and $I_{m+1}:= \{ms_2+1,\dots,d\}$. For a vector with support $[d]$ to be in $\Bad^{d}_{k,s_1,s_2,\geq t}(n)$, it must necessarily be the case that the restriction of the vector to each of the intervals $I_1,\dots,I_m$ is in $\Bad_{k,s_1,\geq t}(s_2)$. Since there are at most $p^{|I_{m+1}|}\leq p^{s_2}$ many choices for the restriction of the vector to $I_{m+1}$, it follows from \cref{thm:counting-lemma} that
\begin{eqnarray*}
|\Bad_{k,s_{1},s_{2},\geq t}^{d}(n)| & \leq & {n \choose d}\left|\Bad_{k,s_{1},\geq t}(s_{2})\right|^{m}p^{s_{2}}
  \leq {n \choose d}\left\{ \left(\frac{s_{1}}{s_{2}}\right)^{2k-1}p^{s_{2}}(0.01t)^{-s_{2}+s_{1}}\right\} ^{m}p^{s_{2}}\\
 & \leq & {n \choose d}\left(p^{s_{2}}(0.01t)^{-s_{2}+s_{1}}\right)^{\frac{d}{s_{2}}}p^{s_{2}}= {n \choose d}p^{d+s_{2}}(0.01t)^{-d+\frac{s_{1}}{s_{2}}d}.
\end{eqnarray*}
\end{proof}

We conclude this subsection with a few corollaries of \cref{thm:halasz} and \cref{corollary:counting}. Let $\boldsymbol{a} \in \Supp_{=d}(n) \setminus \Bad^{d}_{k,s_1,s_2,\geq (t+1)}(n)$ for $s_1\leq d\leq n$. Then, by definition, there exists $\Lambda=\Lambda(\boldsymbol{a}) \subseteq \supp(\boldsymbol{a})$ such that $s_1 \leq |\Lambda |=|\supp(\boldsymbol{a}|_\Lambda)|\leq s_2$ and $R_k^*(\boldsymbol{a}|_\Lambda)< (t+1)\cdot 2^{2k}|\Lambda|^{2k}/p$. From now on, fix such a subset $\Lambda(\boldsymbol{a})$ for every such vector $\boldsymbol{a}$.   

\begin{corollary}
\label{corollary:halasz-usable}
Let $p$ be a prime and let $\boldsymbol{a} \in \Supp_{=d}(n) \setminus \Bad^{d}_{k,s_1,s_2,\geq (t+1)}(n)$ for $1\leq s_1 \leq d \leq n$. Suppose $p^{-1} \geq \max\left\{e^{-s_1/2k}, \left(50k/s_1\right)^{0.99k}\right\}$ and $t \geq s_1 \geq k \geq 100$. Then, 
$$\sup_{0\leq \mu \leq \frac{1}{2}}\rho_{\mu}^{\F_p}(\boldsymbol{a}|_{\Lambda(\boldsymbol{a})})\leq \frac{2Ct\sqrt{k}}{p\sqrt{s_1}},$$
where $C \geq 1$ is an absolute constant.
\end{corollary}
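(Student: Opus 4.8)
The plan is to apply Hal\'asz's inequality (\cref{thm:halasz}) to the restricted vector $a|_{\Lambda(a)}$, after first converting the bound on $R^*_k$ that we have into a bound on $R_k$ via \cref{lemma:R_k vs R_k^*}. Since $a \in \Supp_{=d}(n) \setminus \Bad^{d}_{k,s_1,s_2,\geq t}(n)$ and $|\supp(a)| = d \geq s_1$, the definition of the bad set produces a subset $\Lambda := \Lambda(a) \subseteq \supp(a)$ with $s_1 \leq |\Lambda| \leq s_2$ and $R^*_k(a|_\Lambda) < t\cdot 2^{2k}|\Lambda|^{2k}/p$ (this is the $\Lambda(a)$ fixed in the paragraph preceding the statement). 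Writing $\lambda := |\Lambda|$ and noting $k \leq s_1 \leq \lambda$, \cref{lemma:R_k vs R_k^*} applied to $a|_\Lambda$ gives
\[
R_k(a|_\Lambda) \;<\; \frac{t\, 2^{2k}\lambda^{2k}}{p} \;+\; (16k)^k\lambda^k .
\]

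Next I would apply \cref{thm:halasz} to $a|_\Lambda$, regarded as a vector of $\F_p^{\lambda}$ all of whose coordinates are nonzero (so $|\supp(a|_\Lambda)| = \lambda$), with the threshold function $f(\lambda) := \lambda/k$. The two side conditions of \cref{thm:halasz} then hold: $f(\lambda) = \lambda/k \leq \lambda/100$ since $k \geq 100$, and $k \leq \lambda/f(\lambda) = k$. Substituting the bound on $R_k(a|_\Lambda)$ above and simplifying (the term $t\,2^{2k}\lambda^{2k}/p$ contributes $Ct\sqrt{k}/(p\sqrt{\lambda})$, and $(16k)^k\lambda^k$ contributes $C(4k)^k\sqrt{k}/(\lambda^k\sqrt{\lambda})$ after cancelling the $2^{2k}$ and converting $1/\sqrt{\lambda/k}$ to $\sqrt k/\sqrt\lambda$), one obtains, uniformly over $0 \leq \mu \leq 1/2$,
\[
\rho^{\F_p}_\mu(a|_\Lambda) \;\leq\; \frac1p \;+\; \frac{Ct\sqrt{k}}{p\sqrt{\lambda}} \;+\; \frac{C(4k)^k\sqrt{k}}{\lambda^k\sqrt{\lambda}} \;+\; e^{-\lambda/(2k)}.
\]

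The last step is to absorb the three error terms. Using $\lambda \geq s_1$, the second summand is at most $Ct\sqrt{k}/(p\sqrt{s_1})$, which is half of the asserted bound, so it suffices to bound each of the other three summands by $Ct\sqrt{k}/(3p\sqrt{s_1})$. For $1/p$ this amounts to $Ct\sqrt{k} \geq 3\sqrt{s_1}$, immediate from $C\geq 1$, $t \geq s_1 \geq 1$, $k \geq 100$. For $e^{-\lambda/(2k)} \leq e^{-s_1/(2k)}$ (monotonicity in $\lambda$) this follows from the hypothesis bounding $p$ in terms of $e^{s_1/(2k)}$, again together with $Ct\sqrt{k} \geq 3\sqrt{s_1}$. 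For $C(4k)^k\sqrt{k}/(\lambda^k\sqrt{\lambda}) \leq C(4k)^k\sqrt{k}/(s_1^k\sqrt{s_1})$ (again monotone in $\lambda$) this follows from the hypothesis bounding $p$ in terms of $(s_1/4k)^k$ — equivalently $3p(4k)^k \leq t\, s_1^k$ — using $t \geq s_1$. Summing the four bounds gives $\rho^{\F_p}_\mu(a|_\Lambda) \leq 2Ct\sqrt{k}/(p\sqrt{s_1})$, as claimed.

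I expect no genuinely hard step; the single point requiring thought is the choice $f(\lambda) = \lambda/k$ in Hal\'asz's inequality, which is essentially forced. Since $\lambda$ may be as small as $s_1$, the constraint $k \leq \lambda/f(\lambda)$ prevents $f$ from exceeding $\lambda/k$, and this maximal admissible value is precisely the one that makes the dominant ("variance") term $Ct\sqrt{k}/(p\sqrt{\lambda})$ of the target size $Ct\sqrt{k}/(p\sqrt{s_1})$ while keeping the exponential tail $e^{-\lambda/(2k)}$ negligible; the remaining constraint $f(\lambda) \leq \lambda/100$ is then exactly what dictates the hypothesis $k \geq 100$. Everything else is bookkeeping to check that the stated lower bounds forced on $p$ are strong enough to dominate the two non-variance error terms by the main term.
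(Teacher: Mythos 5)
Your argument coincides with the paper's: apply Hal\'asz's inequality (\cref{thm:halasz}) to $b := a|_{\Lambda(a)}$ with the threshold function $f(|b|) = |b|/k$, pass from $R_k^*$ to $R_k$ via \cref{lemma:R_k vs R_k^*}, use $|\Lambda(a)| \geq s_1$, and absorb the residual terms using the hypotheses on $p$ (which, as you correctly read them, must be $p \leq \min\{e^{s_1/2k}, (s_1/4k)^k\}$ --- the printed inequalities contain a sign/reciprocal typo). The only cosmetic difference is in the absorption bookkeeping: you bound each subordinate term by a third of the target, while the paper first collects them as $(2+C)/p$ and then compares to the main term once.
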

\begin{proof}
For convenience of notation, let $\boldsymbol{b}:= \boldsymbol{a}|_{\Lambda(\boldsymbol{a})}$. 
By applying \cref{thm:halasz} to the vector $\boldsymbol{b}$ with $f(|\supp(\boldsymbol{b})|) := |\supp(\boldsymbol{b})|/k = |\boldsymbol{b}|/k =: f(|\boldsymbol{b}|)$ (which is a valid choice for $f$ since $k\geq 100$ by assumption), we get 
\begin{eqnarray*}
\sup_{0\leq\mu\leq\frac{1}{2}}\rho_{\mu}^{\F_p}(\boldsymbol{b}) 
& \leq & \frac{1}{p}+\frac{C\left(R_{k}^{*}(\boldsymbol{b})+(40k^{0.99}|\boldsymbol{b}|^{1.01})^{k}\right)}{2^{2k}|\boldsymbol{b}|^{2k}\sqrt{|\boldsymbol{b}|/k}}+e^{-|\boldsymbol{b}|/2k}\\
 & \leq & \frac{1}{p}+\frac{C(t+1)}{p\sqrt{|\boldsymbol{b}|/k}}+\frac{C(40k^{0.99})^{k}}{|\boldsymbol{b}|^{0.99k}\sqrt{|\boldsymbol{b}|/k}}+e^{-|\boldsymbol{b}|/2k}\\
 & \leq & \frac{1}{p}+\frac{C(t+1)\sqrt{k}}{p\sqrt{|\boldsymbol{b}|}}+\frac{C(40k^{0.99})^{k}}{|\boldsymbol{b}|^{0.99k}}+e^{-|\boldsymbol{b}|/2k}\\
 & \leq & \frac{1}{p}+\frac{C(t+1)\sqrt{k}}{p\sqrt{s_{1}}}+C\left(\frac{50k}{s_{1}}\right)^{0.99k}+e^{-s_{1}/2k}\\
 & \leq & \frac{(2+C)}{p}+\frac{C(t+1)\sqrt{k}}{p\sqrt{s_{1}}} \leq  \frac{2Ct\sqrt{k}}{p\sqrt{s_{1}}},
\end{eqnarray*}
where the first line follows from \cref{thm:halasz}, \cref{lemma:R_k vs R_k^*}, and the choice of $\Lambda(\boldsymbol{a})$, the fifth line follows by the assumption on $p$, and the last line follows since $t \geq s_1 \geq 100$.  
\end{proof}

\begin{corollary}
\label{corollary:prob-orth-fixed-vector}
Let $p$ be a prime and let $\boldsymbol{a} \in \Bad^{d}_{k,s_1,s_2,\geq t}(n) \setminus \Bad^{d}_{k,s_1,s_2,\geq (t+1)}(n)$. Suppose $p^{-1} \geq \max\left\{e^{-s_1/2k}, \left(50k/s_1\right)^{0.99k}\right\}$, $n\geq d\geq s_1$, and $t \geq s_1 \geq k \geq 100$. Then, for $0\leq \beta := \beta(n) \leq 1/2$,
$$\Pr_{M_n}[\boldsymbol{a} \text{ is orthogonal to $\geq (1-\beta)n$ rows of } M_n] \leq 2^{n H(\beta)}p^{\beta n}\left(\frac{2Ct\sqrt{k}}{p\sqrt{s_1}}\right)^{n-s_2},$$
where $C\geq 1$ is an absolute constant. 
\end{corollary}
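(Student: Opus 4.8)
The plan is to fix the vector $a$ and imitate the argument of \cref{lemma:eliminate-small-support}, with one decisive improvement: rather than exposing a single column of $M_n$ per row (which only buys a factor of $1/2$), I will expose a whole block of $\ell := |\Lambda(a)|$ columns and replace that factor of $1/2$ by the atom probability $\rho := \rho^{\F_p}_{0}\left(a|_{\Lambda(a)}\right)$, which \cref{corollary:halasz-usable} tells us is small; note $\ell \in [s_1,s_2]$ by the definition of $\Lambda(a)$.

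First I would record the anti-concentration input. Since $a \in \Supp_{=d}(n) \setminus \Bad^{d}_{k,s_1,s_2,t+1}(n)$ and $t+1 \geq s_1$, applying \cref{corollary:halasz-usable} with $t+1$ in place of $t$ gives
\[
\rho \;\leq\; \sup_{0\leq\mu\leq 1/2}\rho^{\F_p}_{\mu}\left(a|_{\Lambda(a)}\right) \;\leq\; \frac{2C(t+1)\sqrt{k}}{p\sqrt{s_1}},
\]
and I will also use the elementary bounds $1/p \leq \rho \leq 1$, the lower one because $\sum_{i\in\Lambda(a)}a_ix_i$ ranges over at most $p$ values of $\F_p$. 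Next, using \cref{obs:inject-permutation}, I would conjugate $M_n$ by a suitable permutation matrix and permute $a$ correspondingly -- operations which change neither the probability of the event ``$a$ is orthogonal to $\geq(1-\beta)n$ rows of $M_n$'' nor the atom probability $\rho$ -- so as to assume $\Lambda(a) = \{n-\ell+1,\dots,n\}$. Write $M_n = \begin{pmatrix} B & C \\ C^{T} & D\end{pmatrix}$ with $B$ the top-left $(n-\ell)\times(n-\ell)$ block and $C$ the top-right $(n-\ell)\times\ell$ block. The key observation is that the entries of $C$ are i.i.d.\ $\pm1$ and, lying strictly above the diagonal, are independent of $B$; hence, conditioned on any realization of $B$, the $n-\ell$ events $\{(M_na)_i = 0\}$ for $i\in[n-\ell]$ are mutually independent, and each has conditional probability
\[
\Pr\left[(M_na)_i = 0 \mid B\right] \;=\; \Pr_{C_i}\left[\langle C_i,\, a|_{\Lambda(a)}\rangle = -\langle B_i,\, a|_{[n-\ell]}\rangle\right] \;\leq\; \rho .
\]

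With this in hand, the rest is the same union-bound-and-entropy bookkeeping as in \cref{lemma:eliminate-small-support}. If $a$ is orthogonal to at least $(1-\beta)n$ rows of $M_n$, then at most $\beta n$ of the $n$ coordinates of $M_na$ are nonzero, so at least $z := \lceil (n-\ell)-\beta n\rceil$ of the first $n-\ell$ coordinates vanish; summing over the choice of which $z$ of them vanish and invoking the conditional independence above (then taking expectation over $B$),
\[
\Pr_{M_n}\left[a \text{ orthogonal to} \geq (1-\beta)n \text{ rows of } M_n\right] \;\leq\; \binom{n-\ell}{z}\rho^{z} \;\leq\; 2^{nH(\beta)}\,\rho^{(n-\ell)-\beta n},
\]
using $\binom{n-\ell}{z} = \binom{n-\ell}{\lfloor\beta n\rfloor} \leq \binom{n}{\lfloor\beta n\rfloor} \leq 2^{nH(\beta)}$ (valid since $\beta\leq 1/2$), $\rho\leq 1$, and $z \geq (n-\ell)-\beta n$. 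Finally I would tidy up using $\ell\leq s_2$ and $\rho\leq 1$ (so $\rho^{n-\ell}\leq\rho^{n-s_2}$), then $\rho\geq 1/p$ (so $\rho^{-\beta n}\leq p^{\beta n}$), and then the bound on $\rho$ from the first step, reaching the claimed estimate $2^{nH(\beta)}p^{\beta n}\left(\frac{2C(t+1)\sqrt{k}}{p\sqrt{s_1}}\right)^{n-s_2}$. The only point needing genuine care is the block decomposition: it must be set up so that the randomness governing the $n-\ell$ row-sums under consideration is independent of everything being conditioned on, since this is precisely what allows $\rho$ to be raised to a power linear in $n$; everything else mirrors \cref{lemma:eliminate-small-support} with $\rho$ in place of $1/2$. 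Degenerate parameter ranges -- for instance $(n-\ell)-\beta n < 0$ -- only make the asserted inequality trivial, as its right-hand side then exceeds $1$.
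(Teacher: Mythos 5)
Your proof is correct and follows the same strategy as the paper's: conjugate by a permutation via \cref{obs:inject-permutation} to move $\Lambda(a)$ to the last $\ell$ coordinates, use the independence of the off-diagonal block $C$ from the top-left block $B$ so that the row equations indexed by $[n-\ell]$ are conditionally independent with success probability at most $\rho \leq 2C(t+1)\sqrt{k}/(p\sqrt{s_1})$ via \cref{corollary:halasz-usable}, and finish with an entropy estimate. The only (harmless) difference is the form of the union bound: the paper sums over all target vectors $v\in\F_p^n$ with $|\supp(v)|\leq\beta n$, which yields the factor $p^{\beta n}2^{nH(\beta)}$ directly, whereas you union over which $z$ of the first $n-\ell$ coordinates of $M_na$ must vanish (contributing only $2^{nH(\beta)}$) and recover $p^{\beta n}$ separately from $\rho\geq 1/p$ -- both accountings land on the identical final bound.
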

\begin{proof}
The proof is very similar to the proof of \cref{lemma:eliminate-small-support}. 
Let $\Lambda:=\Lambda(\boldsymbol{a})$ and $\boldsymbol{b}:= \boldsymbol{a}|_{\Lambda}$. As in the proof of \cref{lemma:eliminate-small-support}, let $\Sigma$ denote a fixed, but otherwise arbitrary, permutation matrix for which $\Sigma \1_{\Lambda} = \1_{[n-|\Lambda|+1,n]}$. Then, by \cref{eqn:prob-orthog-many},
\begin{align*}
\Pr_{M_n}[\boldsymbol{a} \text{ is orthogonal to $\geq (1-\beta)n$ rows of } M_n] 
= \sum_{t=0}^{\beta n}\sum_{\boldsymbol{v} \in \Supp_{=t}}\Pr_{M_n}\left[M_n \Sigma \boldsymbol{a} = \boldsymbol{v}\right].
\end{align*}
Next, we provide an upper bound on $\Pr_{M_n}\left[M_n(\Sigma \boldsymbol{a})=\boldsymbol{v}\right]$ for any fixed $\boldsymbol{v}=(v_1,\dots,v_n)\in \F_{p}^{n}$. For this, note that the system of equations $M_n(\Sigma \boldsymbol{a})=\boldsymbol{v}$ implies in particular that
\begin{equation}
\label{eqn:orthogonal-many-2}
\sum_{j=1}^{|\Lambda|}m_{i,n-|\Lambda|+j}b_j = v_i - \sum_{j=1}^{n-|\Lambda|}m_{i,j}(\Lambda \boldsymbol{a})_j \text{ for all }i\in [n-|\Lambda|],  
\end{equation}
Note that the right hand side is completely determined by the top-left $(n-|\Lambda|)\times (n-|\Lambda|)$ submatrix of $M_n$, and the entries of $M_n$ appearing on the left are mutually independent even after conditioning on any realisation of the top-left $(n-|\Lambda|)\times (n-|\Lambda|)$ submatrix of $M_n$. In particular, after conditioning on any realisation of the top-left submatrix of this size, each of the $n-|\Lambda|$ equations above is satisfied with probability which is at most $\rho^{\F_p}(\boldsymbol{b})$, and the satisfaction of different equations is mutually independent. Hence, by the law of total probability, the system \cref{eqn:orthogonal-many-2} is satisfied with probability at most $$\left(\rho^{\F_p}(\boldsymbol{b})\right)^{n-|\Lambda|} \leq \left(\frac{2Ct\sqrt{k}}{p\sqrt{s_1}}\right)^{n-|\Lambda|}\leq \left(\frac{2Ct\sqrt{k}}{p\sqrt{s_1}}\right)^{n-s_2} ,$$
where the middle bound follows from \cref{corollary:halasz-usable}, and the right-hand bound follows since $|\Lambda|\leq s_2$. Finally, substituting this in \cref{eqn:prob-orthog-many} and proceeding as in \cref{eqn:easy-entropy-bound} gives the desired conclusion.  
\end{proof}

\begin{corollary}
\label{corollary:prob-level-set}
Let $p$ be a prime and  $k,s_1,s_2,d\in [n], t\in [p]$ be such that $1\leq s_1 \leq s_2 \leq n/2$, $s_1 \leq d \leq n$, $p^{-1} \geq \max\left\{e^{-s_1/2k}, \left(50k/s_1\right)^{0.99k}\right\}$, and $t \geq s_1 \geq k \geq 100$.
Then, for $0\leq \beta := \beta(n) \leq 1/2$,
\begin{align*}
\Pr_{M_n}[\exists \boldsymbol{a} \in \Bad^{d}_{k,s_1,s_2,\geq t}(n)\setminus \Bad^{d}_{k,s_1,s_2,\geq (t+1)}(n) : \boldsymbol{a} \text{ is orthogonal to $\geq (1-\beta)n$ rows of } M_n] \\ \leq (500C)^{n}p^{\beta n + 2s_2 + \frac{s_1}{s_2}d}\left(\frac{k}{s_1}\right)^{n/4},
\end{align*}
where $C\geq 1$ is an absolute constant. 
\end{corollary}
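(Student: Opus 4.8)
The strategy is to combine Corollary~\ref{corollary:prob-orth-fixed-vector} (a bound on the probability that a \emph{fixed} vector at ``level $t$'' is orthogonal to many rows) with Corollary~\ref{corollary:counting} (a bound on the \emph{number} of such vectors) via a union bound, exactly in the spirit of the proof of Lemma~\ref{lemma:eliminate-small-support}. First I would observe that $\Bad^{d}_{k,s_1,s_2,t}(n)\setminus\Bad^{d}_{k,s_1,s_2,t+1}(n)\subseteq \Bad^{d}_{k,s_1,s_2,t}(n)$, so by Corollary~\ref{corollary:counting} the number of vectors in question is at most ${n\choose d}p^{d+s_2}t^{-d+\frac{s_1}{s_2}d}$; and every such vector lies in $\Bad^{d}_{k,s_1,s_2,t}(n)\setminus\Bad^{d}_{k,s_1,s_2,t+1}(n)$, so Corollary~\ref{corollary:prob-orth-fixed-vector} applies to each of them individually, giving the per-vector orthogonality bound $2^{nH(\beta)}p^{\beta n}\bigl(\frac{2C(t+1)\sqrt{k}}{p\sqrt{s_1}}\bigr)^{n-s_2}$.

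Multiplying the count by the per-vector probability and taking a union bound yields
\[
\Pr_{M_n}[\cdots] \;\leq\; {n\choose d}\,p^{d+s_2}\,t^{-d+\frac{s_1}{s_2}d}\cdot 2^{nH(\beta)}\,p^{\beta n}\left(\frac{2C(t+1)\sqrt{k}}{p\sqrt{s_1}}\right)^{n-s_2}.
\]
The remainder is bookkeeping to massage this into the claimed form $(12C)^{n}p^{\beta n+2s_2+\frac{s_1}{s_2}d}(k/s_1)^{n/4}$. I would use ${n\choose d}\le 2^n$ and $2^{nH(\beta)}\le 2^n$ (since $\beta\le 1/2$), bound $t+1\le 2t$ and then $t^{-d+\frac{s_1}{s_2}d}\cdot t^{n-s_2}$; here one needs $t\le p$ (which holds since $t\in[p]$) together with $d\le n$ and $s_2\le n/2$ so that the net power of $t$ is nonnegative and can be bounded by the corresponding power of $p$ — this is where the ``$t\le p$'' hypothesis and the size constraints on $d,s_2$ get used, and it is the one slightly delicate accounting step. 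The powers of $p$ collect to $p^{d+s_2}\cdot p^{\beta n}\cdot p^{-(n-s_2)}\cdot p^{(n-s_2)}\cdots$ and, after folding in the $t$-to-$p$ trade, reduce to $p^{\beta n + 2s_2 + \frac{s_1}{s_2}d}$ up to the slack already absorbed; the factor $\bigl(\frac{\sqrt k}{\sqrt{s_1}}\bigr)^{n-s_2}$ becomes $(k/s_1)^{(n-s_2)/2}\le (k/s_1)^{n/4}$ using $s_2\le n/2$ and $k\le s_1$ (so the base is $\le 1$, making the exponent reduction valid); and the constants $2^n\cdot 2^n\cdot (2C\cdot 2)^{n}$ and similar are all absorbed into $(12C)^n$.

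The main obstacle — really the only one requiring care rather than arithmetic — is ensuring that every exponent manipulation goes in the ``safe'' direction: replacing $t$ by $p$ in $t^{n-s_2}$ increases the quantity (fine, since we want an upper bound), but replacing $t^{-d+\frac{s_1}{s_2}d}$ (a quantity with negative exponent, hence $\le 1$) by $p^{\text{(that exponent)}}$ would \emph{decrease} it and is \emph{not} valid; so I must instead keep $t^{-d+\frac{s_1}{s_2}d}\le 1$ and only convert the positive power $t^{n-s_2}\le p^{n-s_2}$, then check that $p^{d+s_2+\beta n-(n-s_2)+(n-s_2)}=p^{d+2s_2+\beta n}\le p^{2s_2+\beta n+\frac{s_1}{s_2}d}\cdot p^{d(1-\frac{s_1}{s_2})}$ and absorb the leftover $p^{d(1-s_1/s_2)}$ — wait, this does not obviously fit, so in fact the clean route is to not separate the $t$ powers at all but write the whole $t$-contribution as $t^{n-s_2-d+\frac{s_1}{s_2}d}$ and bound it by $p^{n-s_2-d+\frac{s_1}{s_2}d}$ \emph{only if} that exponent is nonnegative (which holds when $n-s_2\ge d$, guaranteed by $s_2\le n/2\le n-d$ when $d\le n/2$; for $d>n/2$ one splits off the support-coordinate contribution differently). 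Getting this exponent sign bookkeeping exactly right, matching the $s_1,s_2,d,n$ constraints in the hypothesis, is the crux; everything else is a mechanical collection of $2^n$ and $C^n$ factors into $(12C)^n$.
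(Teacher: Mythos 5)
Your overall approach — a union bound combining \cref{corollary:prob-orth-fixed-vector} with \cref{corollary:counting}, then exponent bookkeeping — is exactly the paper's, and the intermediate bound
\[
\binom{n}{d}\,p^{d+s_2}\,t^{-d+\frac{s_1}{s_2}d}\cdot 2^{nH(\beta)}\,p^{\beta n}\left(\frac{2C(t+1)\sqrt{k}}{p\sqrt{s_1}}\right)^{n-s_2}
\]
is correct. The constant collection $\binom{n}{d}2^{nH(\beta)}(2C\cdot 2)^{n-s_2}\le(12C)^n$ and the reduction $(k/s_1)^{(n-s_2)/2}\le(k/s_1)^{n/4}$ (using $k\le s_1$ and $s_2\le n/2$) are also fine.

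The gap is in the $t$-versus-$p$ accounting, and you correctly sense the danger but do not close it. Your first instinct — bound the whole $t$-contribution $t^{\,n-s_2-d+\frac{s_1}{s_2}d}$ by the same power of $p$ — is only valid when that exponent is nonnegative, which fails for $d$ close to $n$ (e.g.\ $d=n$, $s_2=n/2$, $s_1\ll s_2$). Your fallback of keeping $t^{-d+\frac{s_1}{s_2}d}\le 1$ and converting only $t^{n-s_2}\le p^{n-s_2}$ leaves a $p$-exponent of $d+s_2+\beta n$, which exceeds the target $\beta n+2s_2+\frac{s_1}{s_2}d$ when $d$ is large (you need $d(1-s_1/s_2)\le s_2$, which again fails for $d$ near $n$). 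Your third attempt acknowledges the sign problem for $d>n/2$ but merely says one ``splits off the support-coordinate contribution differently'' without specifying how — that is the missing idea.

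The paper's resolution: peel off only $t^{\frac{s_1}{s_2}d}\le p^{\frac{s_1}{s_2}d}$ (safe, as $t\le p$ and the exponent is nonnegative), leaving a residual $(t/p)^{n-d-s_2}$, and then bound this residual by $p^{s_2}$ \emph{regardless of the sign of $n-d-s_2$}: if $n-d-s_2\ge 0$ then $(t/p)^{n-d-s_2}\le 1\le p^{s_2}$; if $n-d-s_2<0$ then $(t/p)^{n-d-s_2}=(p/t)^{d+s_2-n}\le p^{d+s_2-n}\le p^{s_2}$, using $t\ge 1$ and $d\le n$. This is why the stated hypotheses $t\in[p]$, $d\le n$, $s_2\le n/2$ suffice — no case split on the size of $d$ is needed, and the final exponent assembles to $p^{\beta n+s_2+\frac{s_1}{s_2}d}\cdot p^{s_2}=p^{\beta n+2s_2+\frac{s_1}{s_2}d}$.
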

\begin{proof}
Using \cref{corollary:prob-orth-fixed-vector} to bound the probability that any given $\boldsymbol{a} \in \Bad^{d}_{k,s_1,s_2,\geq t}(n)\setminus \Bad^{d}_{k,s_1,s_2,\geq (t+1)}(n)$ is orthogonal to at least $(1-\beta)n$ rows of $M_n$, and taking the union bound over all $|\Bad^{d}_{k,s_1,s_2, \geq t}(n)\setminus \Bad^{d}_{k,s_1,s_2, \geq (t+1)}(n)|$ such vectors $\boldsymbol{a}$, we see that the desired probability is at most
\begin{align*}
|\Bad^{d}_{k,s_1,s_2, \geq t}(n)\setminus \Bad^{d}_{k,s_1,s_2, \geq (t+1)}(n)|\cdot 2^{n H(\beta)}p^{\beta n}\left(\frac{2Ct\sqrt{k}}{p\sqrt{s_1}}\right)^{n-s_2}
&\leq |\Bad^{d}_{k,s_1,s_2,\geq t}(n)|\cdot 2^{n}p^{\beta n}\left(\frac{2Ct\sqrt{k}}{p\sqrt{s_1}}\right)^{n-s_2}\\
\leq 2^{n}\binom{n}{d}p^{d+s_2}(0.01t)^{-d+\frac{s_1}{s_2}d}p^{\beta n}\left(\frac{2Ct\sqrt{k}}{p\sqrt{s_1}}\right)^{n-s_2}
\leq& (500C)^{n}p^{\beta n+s_2+\frac{s_1}{s_2}d}\left(\frac{t}{p}\right)^{n-d-s_2}\left(\frac{k}{s_1}\right)^{n/4}\\
\leq (500C)^{n}p^{\beta n + 2s_2 + \frac{s_1}{s_2}d}\left(\frac{k}{s_1}\right)^{n/4},
\end{align*}
where the second inequality follows from \cref{corollary:counting}, and the third inequality follows from $s_2 \leq n/2$. 
\end{proof}
\subsection{Proof of \cref{prop:structural}}
By combining the results of the previous subsection, we can now prove \cref{prop:structural}. 
\begin{proof}[Proof of \cref{prop:structural}]
Consider the following choice of parameters: $k = n^{1/4}$, $s_1 = n^{1/2}\log{n}$, $s_2 = n^{3/4}\sqrt{\log{n}}$, $\beta n= n^{1/4}\sqrt{\log{n}}/128$, $d = n^{2/3}$, $\alpha = 2^{-n^{1/4}\sqrt{\log{n}}/64}$, and $p = 2^{n^{1/4}\sqrt{\log{n}}/32}$. Throughout, we will assume that $n$ is sufficiently large for various inequalities to hold, even if we do not explicitly mention this. \\

{\bf Step 1: }It is readily seen that the assumptions of \cref{lemma:eliminate-small-support} are satisfied, so that $\Pr\left[\overline{\Sv^{p}_{\geq d, \beta n}(n)}\right] \leq 2^{-n/16}$. In other words, except with probability at most $2^{-n/16}$, every vector in $\F_{p}^{n}\setminus\{\0\}$ which is orthogonal to at least $(1-\beta)n$ rows of $M_n$ has support of size at least $d = n^{2/3}$. \\

{\bf Step 2: }Let $\boldsymbol{a} \in \Supp_{=s}(n)\setminus \Bad^{s}_{k,s_1,s_2,\geq \sqrt{p}}(n)$ for any $s\geq d$. Since the assumptions of \cref{corollary:halasz-usable} are satisfied for our choice of parameters, it follows from \cref{corollary:halasz-usable} and \cref{lemma:sbp-monotonicity} that for any $0\leq \mu \leq 1/2$, 
$$\rho_{\mu}^{\F_p}(\boldsymbol{a}) \leq \rho_{\mu}^{\F_p}(\boldsymbol{a}|_{\Lambda(\boldsymbol{a})})\leq \frac{2C\sqrt{k}}{\sqrt{p{s_1}}} \leq \alpha ,$$
for all $n$ sufficiently large.\\ 

{\bf Step 3: }Therefore, it suffices to bound the probability that for some $s\geq d$, there exists some vector in $\Bad^{s}_{k,s_1,s_2,\geq \sqrt{p}}(n)$ which is orthogonal to at least $(1-\beta)n$ rows of $M_n$. By writing 
$$\Bad^{s}_{k,s_1,s_2,\geq \sqrt{p}}(n):= \bigcup_{t=\sqrt{p}}^{p}\Bad^{s}_{k,s_1,s_2,\geq t}(n)\setminus \Bad^{s}_{k,s_1,s_2,\geq (t+1)}(n),$$
noting that the assumptions of \cref{corollary:prob-level-set} are satisfied, and taking the union bound over the choice of $s$ and $t$, it follows that this event has probability at most 
\begin{align*}
np(500C)^{n}p^{\beta n + 2s_2 + \frac{s_1}{s_2}s}\left(\frac{k}{s_1}\right)^{n/4}
&\leq np(500C)^{n}p^{4s_2}2^{-(n\log{n})/16}\\
&\leq np(500C)^{n}2^{-(n\log{n})/32} \leq 2^{-(n\log{n})/64},
\end{align*}
for all $n$ sufficiently large.\\

Combining these steps, it follows that
$$\Pr_{M_n}\left[\overline{\Hv^{p}_{\alpha, \beta n}}\right] \leq 2^{-n/16} + 2^{-(n\log{n})/64} \leq 2^{-n/32},$$
as desired. 
\end{proof}

\section{Proof of \cref{thm:main-thm}}
\label{sec:proof-main-thm}
Our main result is now immediate. 
\begin{proof}[Proof of \cref{thm:main-thm}]
By definition, $\overline{\Gv_{\rho}(n-1)} \subseteq \overline{\Hv_{\rho, \beta n}(n-1)}$ for every $\beta \geq 0$. Therefore, from \cref{eqn:split-into-deg-nondeg}, \cref{eqn:conclusion-degenerate-case}, and \cref{eqn:conclusion-nondegerate}, it follows that
\begin{align*}
\Pr_{M_n}\left[\Ev^{1}_{n-1}\right] \leq \alpha + \Pr_{M^{1}_{n-1}}\left[\overline{\Hv_{\alpha, \beta n}(n-1)}\right] + \left(2^{\beta n}\alpha + 2^{-\beta n + 1} + \Pr_{M^{1}_{n-1}}\left[\overline{\Hv_{\rho, \beta n}(n-1)}\right] \right)^{1/4},
\end{align*}
where $\alpha$ and $\beta$ are as in the statement of \cref{thm:structural}. From \cref{thm:structural}, it follows that the right hand side of the above equation is at most $2^{-n^{1/4}\sqrt{\log{n}}/600}$ for all $n$ sufficiently large. Finally, \cref{lem:rank reduction} and \cref{corollary:remove-first-row} give the desired conclusion. 
\end{proof}
\bibliographystyle{abbrv}
\bibliography{symmetric}

\begin{thebibliography}{10}

\bibitem{bourgain2010singularity}
J.~Bourgain, V.~H. Vu, and P.~M. Wood.
\newblock On the singularity probability of discrete random matrices.
\newblock {\em Journal of Functional Analysis}, 258(2):559--603, 2010.

\bibitem{costello2006random}
K.~P. Costello, T.~Tao, and V.~H. Vu.
\newblock Random symmetric matrices are almost surely nonsingular.
\newblock {\em Duke Mathematical Journal}, 135(2):395--413, 2006.

\bibitem{FJLS2018}
A.~Ferber, V.~Jain, K.~Luh, and W.~Samotij.
\newblock On the counting problem in inverse {L}ittlewood--{O}fford theory.
\newblock {\em arXiv:1904.10425}, 2019.

\bibitem{halasz1977estimates}
G.~Hal{\'a}sz.
\newblock Estimates for the concentration function of combinatorial number
  theory and probability.
\newblock {\em Periodica Mathematica Hungarica}, 8(3-4):197--211, 1977.

\bibitem{jain2019b}
V.~Jain.
\newblock Approximate {S}pielman-{T}eng theorems for random matrices with heavy
  tailed entries: a combinatorial view.
\newblock {\em arXiv:1904.11108}, 2019.

\bibitem{jain2019combinatorial}
V.~Jain.
\newblock Approximate {S}pielman-{T}eng theorems for the least singular value
  of random combinatorial matrices.
\newblock {\em arXiv:1904.10592}, 2019.

\bibitem{jain2019smoothed}
V.~Jain.
\newblock Smoothed analysis of the least singular value without inverse
  {L}ittlewood-{O}fford theory.
\newblock {\em arXiv preprint arXiv:1908.11255}, 2019.

\bibitem{kahn1995probability}
J.~Kahn, J.~Koml{\'o}s, and E.~Szemer{\'e}di.
\newblock On the probability that a random $\pm$1-matrix is singular.
\newblock {\em Journal of the American Mathematical Society}, 8(1):223--240,
  1995.

\bibitem{komlos1967determinant}
J.~Koml{\'o}s.
\newblock On determinant of (0, 1) matrices.
\newblock {\em Studia Science Mathematics Hungarica}, 2:7--21, 1967.

\bibitem{nguyen2012inverse}
H.~H. Nguyen.
\newblock Inverse {L}ittlewood--{O}fford problems and the singularity of random
  symmetric matrices.
\newblock {\em Duke Mathematical Journal}, 161(4):545--586, 2012.

\bibitem{nguyen2011optimal}
H.~H. Nguyen and V.~H. Vu.
\newblock Optimal inverse {L}ittlewood--{O}fford theorems.
\newblock {\em Advances in Mathematics}, 226(6):5298--5319, 2011.

\bibitem{nguyen2013small}
H.~H. Nguyen and V.~H. Vu.
\newblock Small ball probability, inverse theorems, and applications.
\newblock In {\em Erd{\H{o}}s Centennial}, pages 409--463. Springer, 2013.

\bibitem{Odl}
A.~M. Odlyzko.
\newblock On subspaces spanned by random selections of $\{\pm\}$-1 vectors.
\newblock {\em Journal of Combinatorial Theory, Series A}, 47(1):124--133,
  1988.

\bibitem{rudelson2008littlewood}
M.~Rudelson and R.~Vershynin.
\newblock The {L}ittlewood--{O}fford problem and invertibility of random
  matrices.
\newblock {\em Advances in Mathematics}, 218(2):600--633, 2008.

\bibitem{rudelson2010non}
M.~Rudelson and R.~Vershynin.
\newblock Non-asymptotic theory of random matrices: extreme singular values.
\newblock In {\em Proceedings of the International Congress of Mathematicians
  2010 (ICM 2010) (In 4 Volumes) Vol. I: Plenary Lectures and Ceremonies Vols.
  II--IV: Invited Lectures}, pages 1576--1602. World Scientific, 2010.

\bibitem{tao2007singularity}
T.~Tao and V.~H. Vu.
\newblock On the singularity probability of random {B}ernoulli matrices.
\newblock {\em Journal of the American Mathematical Society}, 20(3):603--628,
  2007.

\bibitem{tao2008john}
T.~Tao and V.~H. Vu.
\newblock John-type theorems for generalized arithmetic progressions and
  iterated sumsets.
\newblock {\em Advances in Mathematics}, 219(2):428--449, 2008.

\bibitem{Tikhomirov2018}
K.~Tikhomirov.
\newblock Singularity of random {B}ernoulli matrices.
\newblock {\em arXiv:1812.09016}, 2018.

\bibitem{vershynin2014invertibility}
R.~Vershynin.
\newblock Invertibility of symmetric random matrices.
\newblock {\em Random Structures \& Algorithms}, 44(2):135--182, 2014.

\bibitem{vu2008random}
V.~H. Vu.
\newblock Random discrete matrices.
\newblock In {\em Horizons of combinatorics}, pages 257--280. Springer, 2008.

\end{thebibliography}
\appendix
\section{Proof of Hal\'asz's inequality over $\F_{p}$}
\label{app:halasz}
In this appendix, we prove \cref{thm:halasz}. The proof follows Hal\'asz's original proof in \cite{halasz1977estimates}. 
\begin{proof}[Proof of \cref{thm:halasz}]
  Let $e_p$ be the canonical generator of the Pontryagin dual of $\F_p$, that is, the function $e_p \colon \F_p \to \C$ defined by $e_p(x) = \exp(2\pi i x / p)$. Recall the following discrete Fourier identity in $\F_p$:
  \[
    \delta_0(x) = \frac{1}{p} \sum_{r \in \F_p}e_p(rx),
  \]
  where $\delta_0(0) = 1$ and $\delta_0(x) = 0$ if $x \neq 0$. 
 Note that for any $q \in \F_p$, 
\begin{align*}
\Pr_{x^{\mu}}\left[\sum_{i=1}^{n} a_i x_i^{\mu} = q\right] &= \E_{x^{\mu}}\left[\delta_0 \left(\sum_{i=1}^{n} a_i x_i^{\mu} - q\right)\right]  \\
                          &= \E_{x^{\mu}}\left[ \frac{1}{p} \sum_{r\in \F_p} e_p\left(r \left(\sum_{j=1}^{n} a_j x_j^{\mu} - q\right)\right)\right]  \\
                          &= \E_{x^{\mu}}\left[\frac{1}{p} \sum_{r \in \F_p} \prod_{j=1}^{n} e_p\left( r a_j x_j^{\mu} \right) e_p(-r q)\right] \\
                          &\leq \frac{1}{p} \sum_{r\in \F_p} \prod_{j=1}^{n} \left| \mu + (1-\mu)\cos\left(\frac{2 \pi r a_j}{p}\right) \right| \\
                          &= \frac{1}{p} \sum_{r \in \F_p} \prod_{j=1}^{n} \left|\mu + (1-\mu) \cos\left(\frac{\pi r a_j}{p}\right) \right|,
\end{align*}
  where the equality holds because the map $\F_p \ni r \mapsto 2r \in \F_p$ is a bijection (as $p$ is odd) and (since $x \mapsto |\cos(\pi x)|$ has period $1$ and it is therefore well defined for $x \in \R/\Z$) because $|\cos(2\pi x/p)| = |\cos(\pi (2x)/p)|$ for every $x \in \F_p$.

At this point, we record the useful inequality 
$$
\left |\mu + (1-\mu)\cos\left(\frac{\pi x}{p}\right)\right| \leq \exp\left(-\frac{1}{2}\left\|\frac{x}{p} \right\|^2\right),
$$
which is valid for every real number $x$ uniformly for all $0\leq \mu \leq 1/2$, where $\|x\| := \|x\|_{\R/\Z}$ denotes the distance to the nearest integer.
Thus, we arrive at
\begin{equation}
\label{eqn:halasz-prelim}
\max_{q\in \F_p}\Pr_{x^{\mu}}\left[\sum_{i=1}^{n} a_i x_i^{\mu} = q\right] \leq \frac{1}{p} \sum_{r\in \F_p} \exp\left(-\frac{1}{2} \sum_{j=1}^{n}\|r a_j/p  \|^2\right).
\end{equation}
Now, for each non-negative real $t$, we define the following `level sets'
$$
T_t := \left\{r \in \F_p : \sum_{j=1}^{n} \|r a_j/p  \|^2 \leq t \right\},
$$
and note that 
\begin{equation}
\label{eqn:halasz-integral}
\sum_{r\in \F_p} \exp\left(-\frac{1}{2} \sum_{j=1}^{n} \|r a_j/p  \|^2\right) = \frac{1}{2}\int_0^{\infty} e^{-t/2}|T_t| dt.
\end{equation}

We will now use a critical estimate due to Hal\'asz. First, note that for any $m\in \N$, the iterated sumset $mT_t$ is contained in $T_{m^2 t}$. Indeed, for $r_1,\dots, r_m \in T_t$, we have from the triangle inequality and the Cauchy-Schwarz inequality that 
\begin{align*}
\sum_{j=1}^{n} \left\| \sum_{i=1}^{m} r_i a_j/ p \right\|^2 \leq \sum_{j=1}^{n} \left(\sum_{i=1}^{m} \left\|r_i a_j/p \right\|\right)^2  \leq \sum_{j=1}^{n} m \sum_{i=1}^{m} \left \|r_i a_j/p\right \|^2  \leq m^2 t.
\end{align*}
Recall that the Cauchy--Davenport theorem states that every pair of nonempty $A, B \subseteq \F_p$ satisfies $|A+B| \ge \min\{ p, |A|+|B|-1\}$. It follows that for every positive integer $m$ and every $t \ge 0$, the iterated sumset $mT_t$ satifies $|mT_t| \ge \min\{ p, m|T_t|-m\}$. Hence, $|T_{m^{2}t}| \geq \min\{ p, m|T_t|-m\}$.

Next, since the map $\F_p \ni r \mapsto ra \in \F_p$ is bijective for every non-zero $a \in \F_p$, we have that
\begin{align*}
\sum_{r \in \F_p} \sum_{j=1}^{n} \|r a_j/p \|^2 &\geq \sum_{j \in \supp(\boldsymbol{a})} \sum_{r \in \F_p} \|r a_j/p\|^2 \\
& = |\supp(\boldsymbol{a})|\sum_{r\in \F_p}\|r/p\|^{2}\\
                             &= \frac{2|\supp(\boldsymbol{a})|}{p^2} \sum_{i=1}^{(p-1)/2} i^2 \\
                             &\geq \frac{|\supp(\boldsymbol{a})|p}{50}.
\end{align*}
On the other hand, it follows from the definition of $T_t$ that for every $t \ge 0$,
\[
  \sum_{r \in \F_p} \sum_{j=1}^n \|r a_j/p \|^2 \le |T_t| \cdot t + \big(p - |T_t|\big) \cdot n.
\]
In particular, we see that $|T_s| < p$ if $s \leq |\supp(\boldsymbol{a})|/100$. Therefore, if $t \leq f(|\supp(\boldsymbol{a})|)$ (as in the statement of the theorem), it follows by setting $m:= \lfloor \sqrt{f(|\supp(\boldsymbol{a})|)/t}\rfloor \geq 1$ that $|T_{m^{2}t}| < p$, and hence, 
\begin{equation}
\label{eqn:halasz-cd}
|T_t|\leq \frac{|T_{m^2t}|}{m} + 1 \leq \frac{2\sqrt{t}|T_{f(|\supp(\boldsymbol{a})|)}|}{\sqrt{f(|\supp(\boldsymbol{a})|)}} + 1.
\end{equation}

We now bound the size of $T_{f(|\supp(\boldsymbol{a})|)}$. Using the elementary inequality $1-100 \|z\|^2 \leq \cos(2 \pi z)
$, which holds for all $z\in \mathbb{R}$, it follows that 
$|T_{f(|\supp(\boldsymbol{a})|)}| \leq |T'|$, where 
$$T' := \left\{r \in \F_p : \sum_{j=1}^n \cos(2 \pi r a_j/p) \geq n - 100 f(|\supp(\boldsymbol{a})|) \right\}.$$
In turn, we will bound the size of $T'$ by computing the moments of the random variable (over the randomness of $r\in \F_p)$ given by $\sum_{j=1}^n \cos\left(\frac{2 \pi r a_j}{p}\right)$. More precisely, by Markov's inequality, we have
for any $\ell \in \N$ that
\begin{equation}
\label{eqn:halasz-moment}
|T'|\leq \frac{1}{\left(n-100f(|\supp(\boldsymbol{a})|)\right)^{2\ell} }\sum_{r\in T'}\left|\sum_{j=1}^{n}\cos\left(\frac{2\pi ra_j}{p}\right)\right|^{2\ell}.
\end{equation}
Moreover, we also have
\begin{align*}
\sum_{r \in T'} \left| \sum_{j=1}^n \cos\left( \frac{2 \pi r a_j}{p}\right)\right|^{2 \ell} &\leq \frac{1}{2^{2 \ell}}\sum_{r\in \F_p} \left| \sum_{j=1}^n (\exp( 2 i \pi r a_j/p) + \exp(- 2 i \pi r a_j/p))\right|^{2\ell} \\
&= \frac{1}{2^{2 \ell}}\sum_{\epsilon_1,\dots, \epsilon_{2\ell}} \sum_{j_1, \dots, j_{2 \ell}} \sum_{r \in \F_p} \exp\left( 2 \pi i r \sum_{i=1}^{2 \ell} \epsilon_i a_{j_i}\right) \\
&= \frac{1}{2^{2\ell}}\sum_{\epsilon_1,\dots,\epsilon_{2\ell}} \sum_{j_1,\dots, j_{2\ell}} p \mathbbm{1}_{\sum_{i=1}^{2 \ell} \epsilon_i a_{j_i} = 0} \\
&\leq \frac{p R_\ell(\boldsymbol{a})}{2^{2 \ell}}.
\end{align*}

Finally, combining this with \cref{eqn:halasz-prelim,eqn:halasz-integral,eqn:halasz-cd,eqn:halasz-moment}, we get for any $0\leq \mu \leq 1/2$ and $k\in \N$ as in the statement of the theorem that
\begin{eqnarray*}
\max_{q\in\F_{p}}\Pr_{x^{\mu}}\left[\sum_{i=1}^{n}a_{i}x_{i}^{\mu}=q\right] & \leq & \frac{1}{2p}\int_{0}^{f(|\supp(\boldsymbol{a})|)}e^{-t/2}|T_{t}|dt+\frac{1}{2}e^{-f(|\supp(\boldsymbol{a})|)/2}\\
 & \leq & \frac{1}{2p}\int_{0}^{f(|\supp(\boldsymbol{a})|)}e^{-t/2}\left(\frac{2\sqrt{t}|T'|}{\sqrt{f(|\supp(\boldsymbol{a})|)}}+1\right)dt+\frac{1}{2}e^{-f(|\supp(\boldsymbol{a})|)/2}\\
 & \leq & \frac{|T'|}{p\sqrt{f(|\supp(\boldsymbol{a})|)}}\int_{0}^{f(|\supp(\boldsymbol{a})|)}e^{-t/2}\sqrt{t}dt+\frac{1}{p}+\frac{1}{2}e^{-f(|\supp(\boldsymbol{a})|)/2}\\
 & \leq & \frac{C_{1}|T'|}{p\sqrt{f(|\supp(\boldsymbol{a})|)}}+\frac{1}{p}+e^{-f(|\supp(\boldsymbol{a})|)/2}\\
 & \leq & \frac{1}{p}+\frac{C_{1}R_{k}(\boldsymbol{a})}{2^{2k}\left(n-100f(|\supp(\boldsymbol{a})|)\right)^{2k}\sqrt{f(|\supp(\boldsymbol{a})|)}}+e^{-f(|\supp(\boldsymbol{a})|)/2}\\
 & \leq & \frac{1}{p}+\frac{CR_{k}(\boldsymbol{a})}{2^{2k}n^{2k}\sqrt{f(|\supp(\boldsymbol{a})|)}}+e^{-f(|\supp(\boldsymbol{a})|)/2},
\end{eqnarray*}
as desired, where the last inequality uses the assumption that $f(|\supp(\boldsymbol{a})|) \leq n/k$.

\end{proof}
\end{document}